\newtheorem{theorem}{Theorem}
\newtheorem{lemma}[theorem]{Lemma}
\newtheorem{definition}[theorem]{Definition}
\newtheorem{proposition}[theorem]{Proposition}
\numberwithin{theorem}{section}
\numberwithin{equation}{section}
\newcommand{\mint}{- \mskip-19,5mu \int}
\def\N{\mathbb{N}}
\def\R{\mathbb{R}}
\renewcommand{\d}{\mathrm{d}}
\newcommand{\dx}{\mathrm{d}x}
\newcommand{\dt}{\mathrm{d}t}
\newcommand{\ds}{\mathrm{d}s}
\newcommand{\dtau}{\mathrm{d}\tau}
\renewcommand{\epsilon}{\varepsilon}
\def\pf{\boldsymbol{\mathfrak p}}
\def\p{\beta}
\DeclareMathOperator{\Div}{div}
\renewcommand{\epsilon}{\varepsilon}
\newcommand{\eps}{\varepsilon}
\renewcommand{\rho}{\varrho}
\def\eqn#1$$#2$${\begin{equation}\label#1#2\end{equation}}
\newcommand{\power}[2]{\bm{#1^{\mbox{\unboldmath{\scriptsize$#2$}}}}}
\newcommand{\abs}[1]{|#1|}
\newcommand{\babs}[1]{\big|#1\big|}
\def\Xint#1{\mathchoice
    {\XXint\displaystyle\textstyle{#1}}%
    {\XXint\textstyle\scriptstyle{#1}}%
    {\XXint\scriptstyle\scriptscriptstyle{#1}}%
    {\XXint\scriptscriptstyle\scriptscriptstyle{#1}}%
    \!\int}
\def\XXint#1#2#3{\setbox0=\hbox{$#1{#2#3}{\int}$}
    \vcenter{\hbox{$#2#3$}}\kern-0.5\wd0}
\def\bint{\Xint-}
\def\dashint{\Xint{\raise4pt\hbox to7pt{\hrulefill}}}
\def\Xiint#1{\mathchoice
    {\XXiint\displaystyle\textstyle{#1}}%
    {\XXiint\textstyle\scriptstyle{#1}}%
    {\XXiint\scriptstyle\scriptscriptstyle{#1}}%
    {\XXiint\scriptscriptstyle\scriptscriptstyle{#1}}%
    \!\iint}
\def\XXiint#1#2#3{\setbox0=\hbox{$#1{#2#3}{\iint}$}
    \vcenter{\hbox{$#2#3$}}\kern-0.5\wd0}
\def\biint{\Xiint{-\!-}}
\subjclass[2010]{35B65, 35K40, 35K55}
\keywords{Doubly nonlinear parabolic equation, higher integrability}
\begin{document}
\title[Higher integrability for doubly nonlinear parabolic systems]{Higher integrability for\\ doubly nonlinear parabolic systems}
\date{\today}

\author[V. B\"ogelein]{Verena B\"{o}gelein}
\address{Verena B\"ogelein\\
Fachbereich Mathematik, Universit\"at Salzburg\\
Hellbrunner Str. 34, 5020 Salzburg, Austria}
\email{verena.boegelein@sbg.ac.at}

\author[F. Duzaar]{Frank Duzaar}
\address{Frank Duzaar\\
Department Mathematik, Universit\"at Erlangen--N\"urnberg\\
Cauerstrasse 11, 91058 Erlangen, Germany}
\email{duzaar@math.fau.de}

\author[J. Kinnunen]{Juha Kinnunen}
\address{Juha Kinnunen\\
Aalto University, Department of Mathematics and Systems Analysis\\
P.O. Box 11100, FI-00076 Aalto, Finland}
\email{juha.k.kinnunen@aalto.fi}

\author[C. Scheven]{Christoph Scheven}
\address{Christoph Scheven\\ Fakult\"at f\"ur Mathematik, 
Universit\"at Duisburg-Essen\\45117 Essen, Germany}
\email{christoph.scheven@uni-due.de}

\maketitle

%****************************************************************

\begin{abstract}
This paper proves a local higher integrability result for the spatial gradient of weak solutions to
doubly nonlinear parabolic systems. 
The new feature of the argument is that the intrinsic geometry involves the solution as well as its spatial gradient.
The main result holds true for a range of parameters suggested by other nonlinear parabolic systems.
\end{abstract}

\section{Introduction}
This paper studies regularity of the spatial gradient of weak solutions to doubly nonlinear parabolic equations (systems) of the type
\begin{equation}\label{eq:Trudinger}
	\partial_t \big(|u|^{p-2}u\big) -\Div \big(|Du|^{p-2}Du\big) 
	= 
	\Div \big(|F|^{p-2}F\big) 
\end{equation}
with $1<p<\infty$ in a space-time cylinder 
$\Omega_T:= \Omega\times (0,T)$, where $\Omega\subset\R^n$ is a bounded  domain,
$n\ge 1$, and $T>0$.
Equation \eqref{eq:Trudinger} is a special case of the general doubly nonlinear parabolic equation 
\begin{equation}\label{eq:doubly-gen}
	\partial_t \big(|u|^{m-1}u\big) -\Div \big(|Du|^{p-2}Du\big) 
	= 
	\Div \big(|F|^{p-2}F\big) ,
\end{equation}
with $p>1$ and $m>0$. 
This includes the parabolic $p$-Laplacian and the porous medium equation. Note that with the choice $m=p-1$  we recover \eqref{eq:Trudinger}.  
Equation \eqref{eq:doubly-gen} has a different behavior when $m<p-1$ and $m\ge p-1$.
The first range is called the slow diffusion case, since disturbances propagate with a finite speed and free boundaries occur, while
in the second range disturbances propagate with infinite speed and extinction in finite time may occur.
This is called the fast diffusion case.
In this sense, equation \eqref{eq:Trudinger} represents the borderline case between the slow and fast diffusion ranges.

One might expect that the regularity theory for the doubly nonlinear equation~\eqref{eq:Trudinger} is similar to the one for the heat equation. 
In fact, the equation is homogeneous, in the sense that solutions are invariant under multiplication by constants. 
In addition, a scale and location invariant parabolic Harnack's inequality holds true for non-negative weak solutions, see \cite{Trudinger,Kinnunen-Kuusi}. However, in this case Harnack's inequality does not immediately imply H\"older continuity of solutions, which indicates that there is a difference compared to the heat equation. 
The main difficulty with~\eqref{eq:Trudinger} is that adding a constant to a solution destroys the property of being a solution. 
The general doubly nonlinear equation \eqref{eq:doubly-gen} is non-homogeneous and an intrinsic geometry is used in the regularity theory, 
i.e.~the space-time scaling of cylinders depends either on the solution or the spatial gradient of the solution.
The idea that the inhomogeneous behavior of a nonlinear parabolic equation can be compensated by an intrinsic geometry goes back to the pioneering work of DiBenedetto and Friedman, see for example the monograph \cite{DiBe}.
The regularity theory of weak solutions of \eqref{eq:Trudinger} and \eqref{eq:doubly-gen} is reasonably developed, at least in the scalar case for non-negative solutions; see \cite{Trudinger, Gianazza-Vespri, Kinnunen-Kuusi, DBGV-book} for Harnack's inequality, \cite{Vespri,Kuusi-et-al:1,Kuusi-et-al:2} for H\"older  regularity results, and finally \cite{Siljander} for Lipschitz regularity with respect to the spatial variable for solutions bounded from below by a positive constant. 
However, little is known about signed solutions, regularity of the gradient of a weak solution and systems. 

The primary purpose of this paper is to establish a local higher
integrability result for the spatial gradient of weak solutions to
parabolic equations and systems of the type \eqref{eq:Trudinger}.
We show that there exists a constant $\varepsilon>0$, such that
$$
	|Du|^{p(1+\epsilon)}\in L^1_{\rm loc}(\Omega_T),
$$
whenever $u$ is a weak solution to the equation or the system.
In particular, our result ensures
that weak solutions of \eqref{eq:Trudinger} belong to a slightly
better Sobolev space than the natural energy space and therefore obey a self-improving property of integrability.
Our result comes with a reverse H\"older type estimate, see Theorem~\ref{thm:higherint}.
The higher integrability for the doubly nonlinear equation~\eqref{eq:Trudinger} has been an open problem for a long time.
Here we give an answer to this question in the range
\[
  \max\big\{\tfrac{2n}{n+2},1\big\}<p< \tfrac{2n}{(n-2)_+}.
\]
This range may seem unexpected, but the lower bound also appears in the higher integrability for the parabolic $p$-Laplace system \cite{Kinnunen-Lewis:1}, while the upper bound is exactly the expected one for the porous medium system in the fast diffusion range.
For $n=1$ and $n=2$ our result applies whenever $1<p<\infty$.
It remains an open question whether the corresponding result holds true when $n\ge3$. 

The key ingredient in the proof  of our main result is a suitable intrinsic geometry. 
By now, variants of this idea have been successfully used in establishing the higher integrability for the parabolic $p$-Laplace system \cite{Kinnunen-Lewis:1} and very recently for the porous medium equation \cite{Gianazza-Schwarzacher} and system \cite{BDKS-higher-int}. 
Our idea is to consider space-time cylinders $Q_{r,s}(z_o):=B_r(x_o)\times(t_o-s,t_o+s)$, with $z_o=(x_o,t_o)$, such that the quotient $\frac{s}{r^p}$ satisfies
\begin{equation}\label{geometry}
	\frac{s}{r^p} = \mu^{p-2}
	\quad\mbox{with}\quad
	\mu^p
	\approx
	\frac{\displaystyle{\biint_{Q_{r,s}(z_o)} 
	\frac{\abs{u}^{p}}{r^p}\dx\dt}}
	{\displaystyle{\biint_{Q_{r,s}(z_o)} \big[ |Du|^p +|F|^p\big]\dx\dt}}\,.
\end{equation}
This geometry involves the solution as well as its spatial gradient and therefore allows to balance the mismatch between $|u|$ and $|Du|$ in the equation. To our knowlegde this is the first time that such a geometry is used. On these cylinders we are able to prove Sobolev-Poincar\'e and reverse H\"older type inqualites. The construction of the cylinders is quite involved, since the cylinders on the right-hand side of \eqref{geometry} also depend on the parameter $\mu$. 
In the course of the construction we modify the argument in \cite{Gianazza-Schwarzacher}; see also \cite{BDKS-higher-int}.

In the stationary elliptic case 
the higher integrability was first observed by Elcrat \& Meyers \cite{Meyers-Elcrat},
see also the monographs \cite[Chapter 11, Theorem 1.2]{Giaquinta:book} and \cite[Section 6.5]{Giusti:book}.
The first higher integrability result, in the context of parabolic systems, can be found in
\cite[Theorem 2.1]{Giaquinta-Struwe}. 
The higher integrability for the gradient of solutions for general parabolic systems with $p$-growth has been established by Kinnunen \& Lewis \cite{Kinnunen-Lewis:1}. 
This local interior result has been generalized in the meantime in various directions, e.g.~ 
global results, higher order parabolic systems (interior and at the boundary); 
see \cite{Parviainen, Boegelein:1, Boegelein-Parviainen}.
For the  porous medium equation, i.e.~equation \eqref{eq:doubly-gen}
with $p=2$, the question of higher integrability turned out to be  more
challenging than for the parabolic $p$-Laplace equation, i.e.~equation \eqref{eq:doubly-gen}
with $m=1$. 
The problem was solved only recently by Gianazza \& Schwarzacher \cite{Gianazza-Schwarzacher}. They proved that non-negative weak solutions to the  porous medium equation
possess the higher integrability for the
spatial gradient. 
Their proof, however, uses the method of expansion of positivity
and therefore cannot be extended to signed solutions and porous medium
type systems. A simpler and more flexible proof, which does not rely
on the expansion of positivity, is given in \cite{BDKS-higher-int},
where higher integrability for porous medium type systems is
achieved. As special case, signed solutions are included in this result.

\section{Notation and the main result}

\subsection{Notation}
Throughout the paper we use space-time cylinders of the form
\begin{equation}\label{cylinder}
	Q_\rho^{(\mu)}(z_o)
	:=
	\left\{\begin{array}{ll}
	B_\rho(x_o)\times \Lambda_\rho^{(\mu)}(t_o),&
	\quad\mbox{if $p<2$},\\[5pt]
	B_\rho^{(\mu)}(x_o)\times \Lambda_\rho(t_o),&
	\quad\mbox{if $p\ge2$},
	\end{array}\right.
\end{equation}
with center $z_o=(x_o,t_o)\in \R^n\times\R$, radius $\rho>0$ and scaling  parameter $\mu >0$, where 
$$
	B_\rho^{(\mu)}(x_o)
	:=
	\big\{x\in\R^n: |x-x_o|< \mu^{\frac{2-p}{p}}\rho\big\},
	\quad
	B_\rho(x_o)
	:=
	B_\rho^{(1)}(x_o).
$$ 
and
$$
	\Lambda_\rho^{(\mu)}(t_o)
	:=
	\big(t_o-\mu^{p-2}\rho^{p}, 
	t_o+\mu^{p-2}\rho^{p}\big),
	\quad
	\Lambda_\rho(t_o)
	:=
	\Lambda_\rho^{(1)}(t_o).
$$ 
Note that in both cases the cylinders \eqref{cylinder} admit the scaling property \eqref{geometry}$_1$. Moreover, they satisfy the inclusion 
$$
	Q_\rho^{(\mu_2)}(z_o)
	\subseteq
	Q_\rho^{(\mu_1)}(z_o)
	\quad\mbox{whenever }
	\mu_1\le\mu_2.
$$
In the case that $\mu =1$, we 
omit the scaling parameter in our notation and instead of $Q_\rho^{(1)}(z_o)$ we write $Q_\rho(z_o)$. 
For a map $u\in L^1\big(0,T;L^1(\Omega,\R^N)\big)$ and a given measurable set $A\subset\Omega$ 
with positive Lebesgue measure the slicewise mean $\langle u\rangle_{A}\colon (0,T)\to \R^N$ of $u$ on  $A$ is defined
by
\begin{equation*}
	\langle u\rangle_{A}(t)
	:=
	\mint_{A} u(\cdot,t)\,\dx,
	\quad\mbox{for a.e.~$t\in(0,T)$.}
\end{equation*}
Note that if $u\in C^0\big([0,T];L^p(\Omega,\R^N)\big)$ the slicewise means are defined for any $t\in [0,T]$.
If the set $A$ is a ball $B_\rho^{(\mu)}(x_o)$, then we abbreviate $\langle u\rangle_{x_o;\rho}^{(\mu)}(t):=\langle u\rangle_{B_\rho^{(\mu)}(x_o)}(t)$ and $\langle u\rangle_{x_o;\rho}(t):=\langle u\rangle_{x_o;\rho}^{(1)}(t)$ for $\mu=1$.
Similarly, for a given measurable set $E\subset\Omega_T$ of positive Lebesgue
measure  the mean value $(u)_{E}\in \R^N$ of $u$ on  $E$ is defined by
\begin{equation*}
	(u)_{E}
	:=
	\biint_{E} u\,\dx\dt.
\end{equation*}
If $E\equiv Q_\rho^{(\mu)}(z_o)$, we abbreviate
$(u)^{(\mu)}_{z_o;\rho}:=(u)_{Q_\rho^{(\mu)}(z_o)}$. Moreover, we often write $u(t):=u(\cdot,t)$ for notational
convenience.
For the power of a vector $u\in\R^N$, we use the short-hand notation 
\begin{equation*}
  \power{u}{\alpha}:=|u|^{\alpha-1}u,
  \qquad\mbox{for $\alpha>0$},
\end{equation*}
which we interpret as $\power{u}{\alpha}=0$ in the case $u=0$ and
$\alpha\in(0,1)$. 
Finally, we let $\pf:=\max\{p,2\}$. 

\subsection{Assumptions and the main result}
We consider general systems of the  type
\begin{equation}\label{eq-doubly}
	\partial_t \big(|u|^{p-2}u\big) -
	\Div \mathbf A (x,t,u,Du) 
	=
	\Div \big(|F|^{p-2}F\big)\quad\mbox{in $\Omega_T$}
\end{equation}
where the vector-field $\mathbf A\colon \Omega_T\times\R^N\times\R^{Nn}\to \R^{Nn}$ is a Carath\'eodory function satisfying  
the standard $p$-growth and coercivity conditions
\begin{equation}\label{growth-a}
\left\{
\begin{array}{c}
	\mathbf A(x,t,u,\xi)\cdot\xi\ge \nu |\xi|^p\, ,\\[6pt]
	| \mathbf A(x,t,u,\xi)|\le L |\xi|^{p-1},
\end{array}
\right.
\end{equation}
for a.e.~$z=(x,t)\in \Omega_T$ and any $(u,\xi)\in \R^N\times\R^{Nn}$, where $0<\nu\le L<\infty$ are positive constants. 
In order to formulate our main result, we need to introduce the concept of weak solution.

\begin{definition}\label{def:weak_solution}\upshape
Assume that the vector field
$\mathbf A\colon \Omega_T\times \R^N\times\R^{Nn}\to\R^{Nn}$ satisfies \eqref{growth-a}. 
A measurable function
$u\colon\Omega_T\to\R^N$ in the class
\begin{equation*}
	u\in C^0 \big([0,T]; L^{p}(\Omega;\R^N)\big) \cap 
	L^p\big(0,T;W^{1,p}(\Omega;\R^N)\big)
\end{equation*} 
is a \textit{weak solution} to the doubly non-linear parabolic system  \eqref{eq-doubly} if and only if the identity
\begin{align}\label{weak-solution}
	\iint_{\Omega_T}\big[|u|^{p-2}u\cdot\varphi_t - \mathbf A(x,t,u,Du)\cdot D\varphi\big]\dx\dt
    =
    \iint_{\Omega_T} |F|^{p-2}F\cdot D\varphi \,\dx\dt 
\end{align}
holds true, for every testing function $\varphi\in C_0^\infty(\Omega_T,\R^N)$.
\hfill$\Box$
\end{definition}

The following theorem is our main result.

\begin{theorem}\label{thm:higherint}
Let
\begin{equation}\label{p-bound}
  \max\big\{\tfrac{2n}{n+2},1\big\}<p< \tfrac{2n}{(n-2)_+},
\end{equation}
where the right-hand side is interpreted as $\infty$ for the dimensions $n=1$
and $n=2$, and assume that $\sigma>p$. Then, there exists $\eps_o=\eps_o(n,p,\nu,L)\in (0,1]$ such that whenever 
$F\in L^\sigma(\Omega_T,\R^N)$ and 
$u$ is a weak solution to \eqref{eq-doubly} in the sense of Definition~{\upshape\ref{def:weak_solution}}, then there holds
$$
  	D u
	\in 
	L^{p(1+\eps_1)}_{\rm loc}\big(\Omega_T,\R^{Nn}\big),
$$
where $\epsilon_1:=\min\big\{\eps_o,\frac{\sigma}{p}-1\big\}$. 
Moreover, for every $\eps\in(0,\epsilon_1]$ and every cylinder
$
  Q_{2R}\subseteq\Omega_T
$,
we have
\begin{align*}
	\biint_{Q_{R}} |Du|^{(1+\epsilon)p} \,\dx\dt 
	&\le
	c \Bigg[1+\biint_{Q_{2R}} 
	\bigg[\frac{\abs{u}^{p}}{(2R)^p} + |Du|^p\bigg] \dx\dt
	\Bigg]^{\epsilon}
	\biint_{Q_{2R}} |Du|^{p} \,\dx\dt \\
	&\quad+
	c\,\biint_{Q_{2R}} |F|^{(1+\epsilon)p} \,\dx\dt,
\end{align*}
where $c=c(n,p,\nu,L)$. \hfill $\Box$
\end{theorem}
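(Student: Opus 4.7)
The plan is to prove Theorem~\ref{thm:higherint} via the classical Gehring-type self-improvement scheme, tailored to the intrinsic geometry \eqref{geometry}. The four main ingredients are: (a) a Caccioppoli estimate on intrinsic cylinders $Q^{(\mu)}_\rho(z_o)$; (b) a parabolic Sobolev--Poincar\'e inequality that upgrades (a) to a reverse H\"older inequality with sub-$p$ exponent; (c) the construction, at every level $\lambda$ above a threshold, of a covering of the super-level set $\{|Du|^p+(|u|/R)^p>\lambda\}$ by intrinsic cylinders actually realizing \eqref{geometry}; and (d) a Vitali plus layer-cake argument. The only conceptually new step is (c), because the identity \eqref{geometry} couples $\mu$ to integrals over $Q^{(\mu)}_\rho$ that themselves depend on $\mu$; the remaining steps follow lines familiar from \cite{Kinnunen-Lewis:1,Gianazza-Schwarzacher,BDKS-higher-int}.

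For step (a) I would test \eqref{weak-solution} with $\varphi\approx\zeta^\pf(u-\xi)$ for a cut-off $\zeta$ and a suitable slicewise mean $\xi$, exploiting that $\partial_t(|u|^{p-2}u)\cdot(u-\xi)$ produces, modulo corrections controlled by lower-order terms, a nonnegative boundary expression comparable to $|u-\xi|^\pf/\mu^{\pf-p}$. The scaling $s=\mu^{p-2}\rho^p$ then makes the spatial and temporal cut-off contributions balance, and gives an estimate of the form
\begin{align*}
&\sup_{t\in\Lambda^{(\mu)}_{\rho/2}(t_o)} \mint_{B^{(\mu)}_{\rho/2}(x_o)} \frac{|u-\xi|^\pf}{\mu^{\pf-p}}\dx +\biint_{Q^{(\mu)}_{\rho/2}(z_o)} |Du|^p\dxt \\
&\qquad\le C\biint_{Q^{(\mu)}_{\rho}(z_o)} \bigg[\frac{|u-\xi|^p}{\rho^p} + |F|^p\bigg]\dxt,
\end{align*}
with constants independent of $\mu$; the algebraic splitting between $p<2$ and $p\ge2$ only affects the monotonicity inequality used to bound the boundary term from below. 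For step (b), a parabolic Sobolev--Poincar\'e inequality interpolates this $L^\infty$-in-time bound with the gradient $L^p$ norm to dominate $\biint |u-\xi|^p/\rho^p$ by $\bigl(\biint|Du|^{p_1}\dxt\bigr)^{p/p_1}$ for some $p_1<p$; the admissibility of this Sobolev step is exactly where \eqref{p-bound} is used, with the lower bound forcing $p_1\ge1$ and the upper bound controlling the algebraic exponents arising from the nonlinear parabolic term.

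For step (c), which is the main obstacle, I would fix the ansatz $\mu^p\simeq\lambda$ at each Lebesgue point $z_o$ of the super-level set and then realize \eqref{geometry} by a stopping-time argument in the radius. As $\rho$ decreases, the right-hand side of \eqref{geometry} is continuous, and the inclusion $Q^{(\mu_2)}_\rho\subseteq Q^{(\mu_1)}_\rho$ for $\mu_1\le\mu_2$ provides a usable monotonicity in the scaling parameter; together these allow an intermediate-value or fixed-point argument to produce a first exit radius $\rho(z_o)$ and parameter $\mu(z_o)$ where \eqref{geometry} holds with equality while failing on every smaller concentric intrinsic cylinder. The threshold $\lambda_o$ is calibrated in terms of the averaged energy on $Q_{2R}$ so that these cylinders remain well inside $\Omega_T$ and carry averages of $|Du|^p$, $(|u|/\rho)^p$ and $|F|^p$ all comparable to $\mu^p$. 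A Vitali selection among these cylinders, combined with the reverse H\"older inequality of step (b) and the stopping-time exit condition (which controls the lower-order integrals by $\lambda$), yields a good-$\lambda$ distributional inequality for $|Du|^p$, and layer-cake integration in $\lambda$ produces the quantitative estimate of Theorem~\ref{thm:higherint} with a self-improvement exponent $\eps_o=\eps_o(n,p,\nu,L)$.
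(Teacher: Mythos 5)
Your high-level framework (Caccioppoli estimate, parabolic Sobolev--Poincar\'e, reverse H\"older, stopping time, Vitali, layer-cake) matches the paper's. But step (c), which you correctly identify as the crux, contains a conceptual error that would prevent the construction from working.

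The ansatz $\mu^p\simeq\lambda$ is incompatible with the intrinsic geometry \eqref{geometry}. Note that \eqref{geometry} prescribes $\mu^p$ as a \emph{ratio} of two averages. If one tries to make both $\biint|Du|^p$ and $\biint|u|^p/\rho^p$ comparable to $\lambda^p$ on the stopping cylinder (as your super-level set $\{|Du|^p+(|u|/R)^p>\lambda\}$ suggests), the right-hand side of \eqref{geometry} is $\simeq 1$, forcing $\mu\simeq1$ — no rescaling at all. The paper's resolution is to decouple $\mu$ from $\lambda$: they set
$$
\widetilde\mu_{z_o;\rho}^{(\lambda)}
:=
\inf\Big\{\mu\ge1:\tfrac{1}{|Q_\rho|}\iint_{Q_\rho^{(\mu)}(z_o)}\tfrac{|u|^p}{\rho^p}\,\dx\dt\le\mu^{p-\p}\lambda^p\Big\},
$$
where $\p:=2-p+(\pf-2)(2+\tfrac np)$, so that $\mu$ is driven purely by the $|u|$-integral relative to $\lambda$; they then \emph{separately} stop in $\rho$ so that $\biint_{Q_{\rho_{z_o}}^{(\mu_{\rho_{z_o}})}}[|Du|^p+|F|^p]\,\dx\dt=\lambda^p$, and the super-level set is simply $\{|Du|>\lambda\}$. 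Only the product of these two facts produces the coupling \eqref{geometry}; it is not imposed as a single-equation ansatz. Moreover, $\widetilde\mu_\rho$ need not be monotone in $\rho$, so they replace it by $\mu_\rho:=\max_{r\in[\rho,R]}\widetilde\mu_r$, and this monotonization is essential for the cylinders to be nested and for the Vitali covering lemma to hold. Your sketch does not address monotonicity at all, and without it the covering step is not justified.

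Two further points. First, you say the upper bound $p<\tfrac{2n}{(n-2)_+}$ enters in the Sobolev--Poincar\'e step; in fact it does not. The Sobolev--Poincar\'e inequalities need only the lower bound $p>\max\{\tfrac{2n}{n+2},1\}$ (to have a sub-$p$ exponent $q\ge1$). The upper bound enters only in the cylinder construction: it is precisely what makes the exponent $p-\p$ positive, without which the infimum defining $\widetilde\mu_\rho$ would be ill-posed and the various scaling comparisons (e.g. $\mu_\rho\le(s/\rho)^{(n+2p)/(p-\p)}\mu_s$) would degenerate. Second, because weak solutions to \eqref{eq-doubly} need not have a time derivative, the Sobolev--Poincar\'e inequality cannot be carried out with slice-wise means alone; one needs a gluing lemma (Lemma~\ref{lem:time-diff}) controlling $|\langle\power{u}{p-1}\rangle(t_2)-\langle\power{u}{p-1}\rangle(t_1)|$ via surface integrals of the flux, and then a careful comparison of means of $\power{u}{p-1}$ with means of $u$ (Lemma~\ref{lem:alphalemma}). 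Your proposal does not indicate how the lack of a weak time derivative is handled, and the delicacy here is that the time-slices involve $\power{u}{p-1}$, not $u$.
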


Although Theorem~\ref{thm:higherint} is proved for exponents $p$
in the range \eqref{p-bound}, we indicate in each sub-step of the
proof what are the exact restrictions on $p$ that are needed in the
particular step.
In this way, the reader can easily retrace where restriction \eqref{p-bound} occurs.

\section{Auxiliary Material}

In order to ``re-absorb'' certain terms, we will use the following iteration lemma, cf.~\cite[Lemma 6.1]{Giusti:book}.

\begin{lemma}\label{lem:tech}
Let $0<\vartheta<1$, $A,C\ge 0$ and $\alpha > 0$. Then there exists a constant $c = c(\alpha,\vartheta)$
such that whenever  $\phi\colon[r,\rho]\to [0,\infty)$, with $0<r<\rho$, is a non-negative bounded function satisfying
\begin{equation*}
	\phi(t)
	\le
	\vartheta\, \phi(s) + \frac{A}{(s-t)^\alpha} + C
	\qquad \text{for all $r\le t<s\le \varrho$,}
\end{equation*}
then
\begin{equation*}
	\phi(r)
	\le
	c\,  \bigg[\frac{A}{(\varrho - r)^\alpha} + C\bigg].
\end{equation*}
\end{lemma}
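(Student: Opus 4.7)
The plan is to apply the recursive inequality along a geometric sequence of points converging to $\varrho$, so that the tail contribution vanishes by boundedness of $\phi$ while the remaining sums involving $A$ and $C$ collapse to the asserted estimate.

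First, I would choose $\tau \in (0,1)$ with $\vartheta\tau^{-\alpha} < 1$, which is possible since $\vartheta^{1/\alpha}<1$. Then define $t_0 := r$ and
\[
  t_{i+1} := t_i + (1-\tau)\tau^i(\varrho - r)\qquad\text{for } i\ge 0,
\]
so that $t_{i+1} - t_i = (1-\tau)\tau^i(\varrho - r)$ and, since $\sum_{i\ge 0}(1-\tau)\tau^i = 1$, the sequence is strictly increasing with $t_i \uparrow \varrho$ and $t_i \in [r,\varrho]$ throughout. Applying the hypothesis with $t = t_i$ and $s = t_{i+1}$ yields
\[
  \phi(t_i) \le \vartheta\,\phi(t_{i+1}) + \frac{A}{(1-\tau)^\alpha\tau^{i\alpha}(\varrho-r)^\alpha} + C.
\]

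Iterating this inequality $k$ times produces
\[
  \phi(r) \le \vartheta^k\phi(t_k) + \sum_{i=0}^{k-1}\vartheta^i\left[\frac{A}{(1-\tau)^\alpha\tau^{i\alpha}(\varrho-r)^\alpha} + C\right].
\]
Since $\phi$ is bounded, say by $M$, and $\vartheta\in(0,1)$, the remainder satisfies $\vartheta^k\phi(t_k) \le \vartheta^k M \to 0$ as $k\to\infty$. The series $\sum_{i\ge 0}(\vartheta\tau^{-\alpha})^i$ converges by the choice of $\tau$, while $\sum_{i\ge 0}\vartheta^i = (1-\vartheta)^{-1}$. Passing to $k\to\infty$ then gives the claimed bound with explicit constant
\[
  c := \max\left\{\frac{1}{(1-\tau)^\alpha(1-\vartheta\tau^{-\alpha})},\ \frac{1}{1-\vartheta}\right\},
\]
which depends only on $\alpha$ and $\vartheta$ via the fixed choice of $\tau$.

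The sole subtle point in this argument is the use of the boundedness hypothesis on $\phi$: it enters exactly once, to force the remainder term $\vartheta^k\phi(t_k)$ to vanish in the limit. Without it, iteration alone only bounds $\phi(r)$ in terms of $\phi$ at an intermediate point, which is not useful. Everything else is a routine geometric-series computation, and the particular value of $\tau\in(\vartheta^{1/\alpha},1)$ merely affects the numerical size of $c(\alpha,\vartheta)$.
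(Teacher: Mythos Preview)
Your proof is correct and is precisely the standard argument for this iteration lemma; the paper itself does not supply a proof but simply cites \cite[Lemma~6.1]{Giusti:book}, where the same geometric-sequence iteration is carried out.
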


The next lemma can be deduced as in \cite[Lemma~8.3]{Giusti:book}.

\begin{lemma}\label{lem:Acerbi-Fusco}
For any $\alpha>0$, there exists a constant $c=c(\alpha)$ such that,
for all $a,b\in\R^N$, $N\in\N$, we have
\begin{align*}
	\tfrac1c\big|\power{b}{\alpha} - \power{a}{\alpha}\big|
	\le
	\big(|a| + |b|\big)^{\alpha-1}|b-a|
	\le
	c \big|\power{b}{\alpha} - \power{a}{\alpha}\big|.
\end{align*}
\end{lemma}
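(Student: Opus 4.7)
The lemma asserts that $|\power{b}{\alpha} - \power{a}{\alpha}|$ and $(|a|+|b|)^{\alpha-1}|b-a|$ are comparable, with constants depending only on $\alpha$. Set $g(\xi) := \power{\xi}{\alpha} = |\xi|^{\alpha-1}\xi$. Both sides are symmetric in $a,b$ and positively $\alpha$-homogeneous under $(a,b)\mapsto(\lambda a,\lambda b)$ with $\lambda>0$, so after WLOG assuming $|a|\le|b|$ and rescaling to $|b|=1$ (the trivial case $a=b$ aside), the claim becomes a two-sided bound on the configurations with $|b|=1$, $|a|\in[0,1]$, $a\ne b$.

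The central computation is that, for $\xi\ne 0$, the differential
\begin{equation*}
  Dg(\xi) = |\xi|^{\alpha-1}\Bigl(I + (\alpha-1)\tfrac{\xi\otimes\xi}{|\xi|^2}\Bigr)
\end{equation*}
has eigenvalues $\alpha|\xi|^{\alpha-1}$ (simple) and $|\xi|^{\alpha-1}$ (multiplicity $N-1$), so one gets the two-sided bilinear bound $\min\{1,\alpha\}|\xi|^{\alpha-1}|v|^2\le Dg(\xi)v\cdot v\le\max\{1,\alpha\}|\xi|^{\alpha-1}|v|^2$ for every $v\in\R^N$. I would split into two regimes. In \emph{Regime I}, $|b-a|\le\tfrac12|b|$, one has $|a|\ge\tfrac12|b|$ and $|\gamma(t)|\ge|b|-(1-t)|b-a|\ge\tfrac12|b|$ along the segment $\gamma(t):=a+t(b-a)$; hence $g$ is smooth on $\gamma$, and the representation
\begin{equation*}
  g(b) - g(a) = \int_0^1 Dg(\gamma(t))(b-a)\,\dt
\end{equation*}
gives the upper bound directly via the operator-norm estimate, and the lower bound by pairing with $b-a$, applying the coercivity of $Dg$, and invoking Cauchy--Schwarz. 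In \emph{Regime II}, $|b-a|>\tfrac12|b|$, we have $|b-a|\asymp|b|\asymp|a|+|b|$, so both quantities are comparable to $|b|^\alpha$: the upper bound reduces to $|g(b)-g(a)|\le|b|^\alpha+|a|^\alpha\le 2|b|^\alpha$, while the lower bound reduces to $|g(b)-g(a)|\ge|b|^\alpha-|a|^\alpha\ge(1-2^{-\alpha})|b|^\alpha$ whenever $|a|\le\tfrac12|b|$.

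The main obstacle is the remaining subcase of Regime II, namely $|a|\in(\tfrac12|b|,|b|]$ and $|b-a|>\tfrac12|b|$, where neither the triangle inequality nor the smooth-segment argument directly delivers the lower bound. I would close it by a compactness argument: on the compact set
\begin{equation*}
  K_\alpha := \bigl\{(a,b)\in\R^N\times\R^N : |b|=1,\ \tfrac12\le|a|\le 1,\ |b-a|\ge\tfrac12\bigr\},
\end{equation*}
the continuous function $(a,b)\mapsto|g(b)-g(a)|$ is strictly positive, because $g$ is a homeomorphism of $\R^N$ with continuous inverse $y\mapsto\power{y}{1/\alpha}$, so $g(a)=g(b)$ would force $a=b$, contradicting $|b-a|\ge\tfrac12$. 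Hence $|g(b)-g(a)|\ge c(\alpha)>0$ on $K_\alpha$, and combined with $|b-a|\asymp|b|$ this yields the missing lower bound. A constructive alternative is to parametrise by $r:=|a|/|b|\in[\tfrac12,1]$ and the angle $\theta$ between $a,b$, write $|g(b)-g(a)|^2=|b|^{2\alpha}(1+r^{2\alpha}-2r^\alpha\cos\theta)$, and check by one-variable calculus that this exceeds $c(\alpha)|b|^{2\alpha}$ on the admissible parameter region. Undoing the normalisation $|b|=1$ by homogeneity produces both inequalities with constants $c=c(\alpha)$.
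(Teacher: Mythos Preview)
Your argument is correct. The two regimes are well chosen: in Regime~I the segment stays uniformly away from the origin so the integral representation of $g(b)-g(a)$ together with the eigenvalue bounds on $Dg$ gives both inequalities; in Regime~II the quantities are both comparable to $|b|^\alpha$, and the only delicate subcase ($\tfrac12|b|<|a|\le|b|$ with $|b-a|>\tfrac12|b|$) is closed either by compactness or by your $(r,\theta)$ computation. One remark: the compactness argument on $K_\alpha\subset\R^N\times\R^N$, as written, a priori yields a constant depending on $N$; the $N$-independence follows because $|g(b)-g(a)|$ depends only on $|a|,|b|$ and the angle between $a$ and $b$, so the problem reduces to $N=2$ --- exactly what your $(r,\theta)$ alternative makes explicit. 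It would be worth saying this in one line if you keep the compactness formulation.

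As for comparison with the paper: there is nothing to compare. The paper does not prove this lemma but simply refers to \cite[Lemma~8.3]{Giusti:book}. Your proof is a self-contained version of the standard argument (integral representation away from the origin, direct estimates near it), and is in the same spirit as the cited reference.
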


The following lemma is a simple consequence of Lemma~\ref{lem:Acerbi-Fusco}.

\begin{lemma}\label{lem:a-b}
For any $\alpha\ge 1$, there exists a constant $c=c(\alpha)$ such that,
for all $a,b\in\R^N$, $N\in\N$, we have
\begin{align*}
	|b-a|^\alpha
    \le
    c\big|\power{b}{\alpha} - \power{a}{\alpha}\big|.
\end{align*}
\end{lemma}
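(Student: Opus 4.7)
The plan is to derive Lemma~\ref{lem:a-b} as an immediate corollary of Lemma~\ref{lem:Acerbi-Fusco}. The key observation is that, when $\alpha\ge 1$, the middle expression $(|a|+|b|)^{\alpha-1}|b-a|$ in Lemma~\ref{lem:Acerbi-Fusco} already dominates $|b-a|^{\alpha}$, so no further work beyond the triangle inequality is needed.

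More concretely, I would first invoke the triangle inequality $|b-a|\le|a|+|b|$ and use that the function $x\mapsto x^{\alpha-1}$ is non-decreasing on $[0,\infty)$, which is precisely the step that requires the hypothesis $\alpha-1\ge 0$. This yields $|b-a|^{\alpha-1}\le(|a|+|b|)^{\alpha-1}$, and multiplying through by $|b-a|$ gives
\[
|b-a|^{\alpha}\le(|a|+|b|)^{\alpha-1}|b-a|.
\]
Combining this with the upper bound in Lemma~\ref{lem:Acerbi-Fusco} produces the claimed estimate $|b-a|^{\alpha}\le c\,\bigl|\power{b}{\alpha}-\power{a}{\alpha}\bigr|$, with the constant $c=c(\alpha)$ inherited directly from Lemma~\ref{lem:Acerbi-Fusco}.

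There is no real obstacle in this argument; the only point worth flagging is that the assumption $\alpha\ge 1$ enters in exactly one place, namely in controlling $|b-a|^{\alpha-1}$ by $(|a|+|b|)^{\alpha-1}$ via monotonicity, and it cannot be dropped since the monotonicity is reversed for $\alpha<1$.
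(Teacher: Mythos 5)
Your proof is correct and is precisely the ``simple consequence'' of Lemma~\ref{lem:Acerbi-Fusco} that the paper alludes to without spelling out: triangle inequality plus monotonicity of $x\mapsto x^{\alpha-1}$ for $\alpha\ge1$ gives $|b-a|^\alpha\le(|a|+|b|)^{\alpha-1}|b-a|$, and then the right-hand inequality of Lemma~\ref{lem:Acerbi-Fusco} finishes. You also correctly identify the single place where $\alpha\ge1$ is used and why it cannot be dropped.
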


The next
lemma provides useful estimates for the boundary term 
\begin{align}\label{def:b}
	\mathfrak b[u,v]
	:=
	\tfrac{1}{p}|v|^{p} - \tfrac{1}{p}|u|^{p} - \power{u}{p{-}1}\cdot(v-u),
	\quad\mbox{for $u,v\in\R^N$.}
\end{align}

\begin{lemma}\label{lem:basic-est-b}
For any $p\ge 1$ there exists a constant $c=c(p)$ such that for any $u,v\in\R^N$, $N\in\N$, we have
\begin{align*}
   \tfrac1c\, \mathfrak b[u,v]
   \le
   \big|\power{u}{\frac{p}{2}}-\power{v}{\frac{p}{2}}\big|^2 
   \le
   c\,\mathfrak b[u,v].
\end{align*}
\end{lemma}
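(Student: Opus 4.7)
The plan is to reduce both inequalities to the classical ellipticity estimate
\[
  \mathfrak{b}[u,v] \simeq (|u|+|v|)^{p-2}|v-u|^2,
\]
where $\simeq$ denotes comparability with constants depending only on $p$. For the middle quantity this reduction is immediate: applying Lemma~\ref{lem:Acerbi-Fusco} with $\alpha = p/2$ one obtains
\[
  \bigl|\power{u}{p/2} - \power{v}{p/2}\bigr|^2 \simeq (|u|+|v|)^{p-2}|v-u|^2,
\]
so the conclusion of Lemma~\ref{lem:basic-est-b} follows once the displayed equivalence for $\mathfrak{b}$ is established.

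For this equivalence I would start from the first-order integral representation
\[
  \mathfrak{b}[u,v] = \int_0^1 \bigl[\power{u + s(v-u)}{p-1} - \power{u}{p-1}\bigr]\cdot (v-u)\,\mathrm{d}s,
\]
obtained from the fundamental theorem of calculus applied to $s\mapsto \frac{1}{p}|u+s(v-u)|^p$ and subtraction of the linear term $\power{u}{p-1}\cdot(v-u)$. The upper bound is then immediate: Lemma~\ref{lem:Acerbi-Fusco} with $\alpha = p-1$ applied pointwise in $s$ gives $|\power{u+s(v-u)}{p-1} - \power{u}{p-1}| \le c(|u|+|v|)^{p-2}\,s\,|v-u|$, and integration in $s$ produces $\mathfrak{b}[u,v] \le c(|u|+|v|)^{p-2}|v-u|^2$.

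The matching lower bound is the main technical point. For $p\ge 2$ a further integration by parts puts $\mathfrak{b}[u,v]$ into the Taylor form $\int_0^1 (1-s)\,D^2\Phi(u+s(v-u))[v-u,v-u]\,\mathrm{d}s$ with $\Phi(w)=\frac{1}{p}|w|^p$; combined with $D^2\Phi(w)\ge |w|^{p-2}I$ and a short case distinction (according to whether $|v-u|$ is comparable to $|u|+|v|$ or substantially smaller, so that $|u+s(v-u)|\simeq |u|+|v|$ on a sub-interval of $[0,1]$ of positive length), this produces the required lower bound. The subquadratic regime $1<p<2$ is where the main obstacle lies, since the Hessian of $\Phi$ is no longer bounded from below by $|w|^{p-2}I$; there I would argue directly from the first-order representation, exploiting non-negativity of the integrand via monotonicity of $w\mapsto\power{w}{p-1}$ and invoking the lower half of Lemma~\ref{lem:Acerbi-Fusco} on a sub-interval of $s$ on which $|u+s(v-u)|$ is controlled from above by a multiple of $|u|+|v|$. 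In both regimes the estimate is classical and ultimately depends only on the strict convexity of $\Phi$.
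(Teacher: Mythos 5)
Your route---reducing both inequalities to the classical ellipticity estimate $\mathfrak b[u,v]\simeq(|u|+|v|)^{p-2}|v-u|^2$ and proving the latter from an integral representation of $\mathfrak b$---is genuinely different from the paper's. The paper settles $1<p\le2$ by citing an external lemma from the companion work \cite{BDKS-higher-int}, and then reduces $p>2$ to that subquadratic case by an algebraic duality: with $a=\power{u}{p-1}$, $b=\power{v}{p-1}$, $q=\tfrac{p}{p-1}\in(1,2)$ and $\phi_q(x)=\tfrac1q|x|^q$, one has the identity $\mathfrak b[u,v]=\phi_q(a)-\phi_q(b)-D\phi_q(b)(a-b)$ together with $\power{a}{q/2}=\power{u}{p/2}$, so the subquadratic case applied to $(a,b)$ gives the superquadratic case for $(u,v)$. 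Your direct argument is more self-contained and also delivers the stronger intermediate fact $\mathfrak b[u,v]\simeq(|u|+|v|)^{p-2}|v-u|^2$.

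However, one step in your sketch is wrong as stated and shows that the hard and easy directions have been interchanged in the range $1<p<2$. The pointwise bound you claim, $|\power{u+s(v-u)}{p-1}-\power{u}{p-1}|\le c\,(|u|+|v|)^{p-2}s|v-u|$, fails for $1<p<2$: for $u=0$ Lemma~\ref{lem:Acerbi-Fusco} gives a quantity comparable to $s^{p-1}|v|^{p-1}$, which exceeds $s|v|^{p-1}$ for small $s$ since $p-1<1$. The integrated inequality is still true, but precisely because $s^{p-1}$ is integrable; in general the subquadratic \emph{upper} bound requires the nontrivial classical estimate $\int_0^1|u+s(v-u)|^{p-2}\,\ds\le c(p)\,(|u|+|v|)^{p-2}$, proved by a case analysis on the segment from $u$ to $v$. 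Conversely, the subquadratic \emph{lower} bound, which you single out as the main obstacle, is in fact the easy direction: for $1<p\le2$ one has $D^2\Phi(w)\ge(p-1)|w|^{p-2}I$ with $\Phi(w)=\tfrac1p|w|^p$, and since $p-2<0$ the pointwise bound $|u+s(v-u)|^{p-2}\ge\big(2(|u|+|v|)\big)^{p-2}$ holds on all of $[0,1]$, so no sub-interval selection or monotonicity argument is needed. In short, the step you call ``immediate'' is exactly where the real work lies when $1<p<2$, and the part you flag as the main obstacle there is the trivial one.
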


\begin{proof} The case $1<p\le 2$
  follows from \cite[Lemma~2.3\,(i)]{BDKS-higher-int} applied with
  $m=\frac{1}{p-1}$. 
  Therefore it remains to consider the case
  $p>2$.
  In the following we denote $\phi_p(u):=\frac{1}{p}|u|^{p}$. 
With the abbreviations $a=\power{u}{p{-}1}$ and $b=\power{v}{p{-}1}$ we compute
\begin{align*}
	\mathfrak b[u,v]
	&=
	\phi_p(v)-\phi_p(u)- D\phi_p(u)(v-u) \\
        &=
	\tfrac{1}{p}|b|^{\frac{p}{p-1}} - 
	\tfrac{1}{p}|a|^{\frac{p}{p-1}} -
	a\cdot\big(\power{b}{\frac{1}{p-1}} -
        \power{a}{\frac{1}{p-1}}\big)
      \\
      &=
	\tfrac{p-1}{p}|a|^{\frac{p}{p-1}} - 
	\tfrac{p-1}{p}|b|^{\frac{p}{p-1}} -
	\power{b}{\frac{1}{p-1}}\cdot (a-b)
      \\
	&=
	\phi_{\frac{p}{p-1}}(a)-
	\phi_{\frac{p}{p-1}}(b)- 
	D\phi_{\frac{p}{p-1}}(b)(a-b).
\end{align*}
Since $\frac{p}{p-1}<2$ we may apply Lemma~\ref{lem:basic-est-b} in the subquadratic case. In this way we obtain
\begin{align*}
	\mathfrak b[u,v]
	\le
	c\big|\power{a}{\frac{p}{2(p-1)}}-
	\power{b}{\frac{p}{2(p-1)}}\big|^2 
	=
	c\big|\power{u}{\frac{p}{2}}-\power{v}{\frac{p}{2}}\big|^2 
\end{align*}
and 
\begin{align*}
	\mathfrak b[u,v]
	\ge
	\tfrac1c\big|\power{a}{\frac{p}{2(p-1)}}-
	\power{b}{\frac{p}{2(p-1)}}\big|^2 
	=
	\tfrac1c\big|\power{u}{\frac{p}{2}}-\power{v}{\frac{p}{2}}\big|^2 .
\end{align*}
This finishes the proof of the lemma.
\end{proof}

It is well known that mean values over subsets $A\subset B$ are quasi-minimizers in the integral $a\mapsto \int_B |u-a|^p \dx$. 
The following statement shows that mean values over subsets are still quasi-minimizing for $u^\alpha$ with $\alpha\ge\frac1p$. 
For $p=2$ and $A=B$, the lemma has been proved in \cite[Lemma 6.2]{Diening-Kaplicky-Schwarzacher}; see also \cite[Lemma 2.6]{BDKS-higher-int}. 
Here, we state a general version for powers. As expected, the quasi-minimality constant depends on the ratio of the measures of the set and the subset.

\begin{lemma} \label{lem:alphalemma}
Let $p\ge 1$ and $\alpha\ge\frac1p$. Then, there exists a constant 
$c=c(\alpha,p)$ such that whenever 
$A\subseteq B\subset \R^k$, $k\in\N$, are two bounded domains of positive measure, then for any function $u \in L^{\alpha p}(B,\R^N)$ and any constant $a\in\R^N$, we have 
$$
	\mint_B \big|\power{u}{\alpha}-\power{(u)_A}{\alpha}\big|^p \dx 
	\leq 
	\frac{c\,|B|}{|A|} \mint_B \big|\power{u}{\alpha}-\power{a}{\alpha}\big|^p \dx.
$$
\end{lemma}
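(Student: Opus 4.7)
My plan is to reduce the assertion to a pointwise estimate on the ``constant'' term and then handle that estimate with the aid of Lemma~\ref{lem:Acerbi-Fusco} and Lemma~\ref{lem:a-b}. Starting from the triangle inequality
\[
\bigl|\power{u}{\alpha}-\power{(u)_A}{\alpha}\bigr|^p
\le 2^{p-1}\bigl|\power{u}{\alpha}-\power{a}{\alpha}\bigr|^p + 2^{p-1}\bigl|\power{a}{\alpha}-\power{(u)_A}{\alpha}\bigr|^p,
\]
integrating over $B$ and observing that the second term is constant in $x$, the statement reduces to the pointwise bound
\[
\bigl|\power{a}{\alpha}-\power{(u)_A}{\alpha}\bigr|^p \le c(\alpha,p)\,\mint_A\bigl|\power{u}{\alpha}-\power{a}{\alpha}\bigr|^p\dx.
\]
The ratio $\tfrac{|B|}{|A|}$ in the conclusion then appears automatically, via the elementary estimate $\mint_A(\cdot)\le\tfrac{|B|}{|A|}\mint_B(\cdot)$ valid for non-negative integrands when $A\subseteq B$.

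To establish the pointwise bound, I would translate both sides into their equivalent weighted form provided by Lemma~\ref{lem:Acerbi-Fusco}, namely $\bigl|\power{y}{\alpha}-\power{a}{\alpha}\bigr|\simeq(|y|+|a|)^{\alpha-1}|y-a|$. In the \emph{super-linear regime} $\alpha\ge 1$, the weight is non-singular: the target estimate then follows by combining the standard Jensen inequality $|(u)_A-a|\le\mint_A|u-a|\dx$ with the elementary bound $|(u)_A|\le\mint_A|u|\dx$, and an application of H\"older's inequality to redistribute the weight $(|u|+|a|)^{p(\alpha-1)}$, with a sub-case split according to whether $p(\alpha-1)\ge 1$ or not.

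The \emph{sub-linear regime} $\tfrac1p\le\alpha<1$ is more delicate because the weight $(|y|+|a|)^{p(\alpha-1)}$ is singular at the origin. Here my plan is first to exploit $|(u)_A|+|a|\ge|(u)_A-a|$ together with the negativity of $p(\alpha-1)$ to collapse the weighted quantity into the single power $|(u)_A-a|^{p\alpha}$; then to use Jensen (permitted because $p\alpha\ge 1$) to bring the average inside the power; and finally to apply Lemma~\ref{lem:a-b} with exponent $p\alpha$ together with Lemma~\ref{lem:Acerbi-Fusco} with exponent $p$, using the identity $\power{u}{\alpha p}=\power{(\power{u}{\alpha})}{p}$, in order to convert $|u-a|^{p\alpha}$ back into a quantity controlled by $|\power{u}{\alpha}-\power{a}{\alpha}|^p$ up to a weight $(|u|^\alpha+|a|^\alpha)^{p-1}$, which is then re-absorbed via H\"older's inequality.

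I expect the sub-linear regime of the final step to be the main obstacle, since the map $y\mapsto\bigl|\power{y}{\alpha}-\power{a}{\alpha}\bigr|^p$ fails to be convex for $\alpha<1$, so a direct Jensen argument is unavailable; the hypothesis $\alpha\ge 1/p$ is tailor-made to enable Lemma~\ref{lem:a-b} as a substitute, and the H\"older re-absorption at the end requires careful bookkeeping of exponents in order not to introduce spurious weighted terms that cannot be controlled by $\mint_A\bigl|\power{u}{\alpha}-\power{a}{\alpha}\bigr|^p\dx$ alone.
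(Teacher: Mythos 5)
Your reduction to the pointwise estimate $\bigl|\power{(u)_A}{\alpha}-\power{a}{\alpha}\bigr|^p \le c\,\mint_A\bigl|\power{u}{\alpha}-\power{a}{\alpha}\bigr|^p\dx$ is exactly the paper's, and your plan for the super-linear regime $\alpha\ge1$ is sound and in the same spirit as the paper's.

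The sub-linear regime $\tfrac1p\le\alpha<1$, however, contains a genuine gap, and it occurs earlier than you anticipate. Your first two steps bound the left-hand side by $c\,|(u)_A-a|^{p\alpha}$ (collapsing the weight) and then, via Jensen, by $c\,\mint_A|u-a|^{p\alpha}\dx$. This intermediate quantity is \emph{not} controlled by $\mint_A\bigl|\power{u}{\alpha}-\power{a}{\alpha}\bigr|^p\dx$, so no amount of subsequent bookkeeping can close the argument. Take $u\equiv a+\delta v$ for a unit vector $v$ and $0<\delta\ll|a|$: then $\mint_A|u-a|^{p\alpha}\dx=\delta^{p\alpha}$, whereas by Lemma~\ref{lem:Acerbi-Fusco} the target quantity is $\mint_A\bigl|\power{u}{\alpha}-\power{a}{\alpha}\bigr|^p\dx\approx |a|^{p(\alpha-1)}\delta^{p}$, and the ratio $(|a|/\delta)^{p(1-\alpha)}$ blows up as $\delta\downarrow0$ since $\alpha<1$. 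Consistently, when you unwind the chain through Lemma~\ref{lem:a-b} and Lemma~\ref{lem:Acerbi-Fusco} you land on $\mint_A(|u|^\alpha+|a|^\alpha)^{p-1}\bigl|\power{u}{\alpha}-\power{a}{\alpha}\bigr|\dx$, and any H\"older/Young re-absorption of the weight forces an additive $|a|^{\alpha p}$ term that cannot be controlled when $\mint_A\bigl|\power{u}{\alpha}-\power{a}{\alpha}\bigr|^p\dx$ is small. The inequality $|(u)_A|+|a|\ge|(u)_A-a|$ is simply too lossy: the factor $(|(u)_A|+|a|)^{p(\alpha-1)}$ is precisely what encodes the cancellation that makes the lemma true, and discarding it overshoots.

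The paper keeps the weight and instead applies Lemma~\ref{lem:Acerbi-Fusco} under the integral sign, splitting $A$ into $\{|u(x)|<2^{1/\alpha}|a|\}$ and $\{|u(x)|\ge2^{1/\alpha}|a|\}$. On the first set one replaces $|(u)_A|+|a|$ by $|u(x)|+|a|$ up to constants; on the second, one compares $|u(x)-a|^\alpha$ with $\bigl|\power{u(x)}{\alpha}-\power{a}{\alpha}\bigr|$ and $(|(u)_A|+|a|)^\alpha$ with $\bigl|\power{(u)_A}{\alpha}-\power{a}{\alpha}\bigr|$, which produces a bound with $\bigl|\power{(u)_A}{\alpha}-\power{a}{\alpha}\bigr|$ on \emph{both} sides at different powers. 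A multiplication by the appropriate power of that quantity followed by Young's and H\"older's inequalities (the latter with exponents $\alpha p$ and $\tfrac{\alpha p}{\alpha p - 1}$, which is where $\alpha\ge\tfrac1p$ really enters) then lets one reabsorb and conclude. If you want to salvage your approach you would need to retain the weight rather than collapse it, which effectively forces you into a bootstrap of this kind.
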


\begin{proof}
The key step in the proof is the estimate of the difference $|\power{(u)_A}{\alpha}-\power{a}{\alpha}|$. 
In the {\bf case} $\boldsymbol{\alpha\ge1}$, we use
Lemmas~\ref{lem:Acerbi-Fusco} and~\ref{lem:a-b} in order to
obtain for a constant $c=c(\alpha,p)$ that 
\begin{align}\label{L26:alpha>1}
	\big|\power{(u)_A}{\alpha}-\power{a}{\alpha}\big|^p
	&\le
	c\big[|(u)_A|^{(\alpha-1)p}+|a|^{(\alpha-1)p}\big]
	|(u)_A - a|^p \nonumber\\
	\nonumber
	&\le
	c\big[|(u)_A-a|^{(\alpha-1)p}+|a|^{(\alpha-1)p}\big]
	|(u)_A-a|^p\\\nonumber
	&\le
        c\,|(u)_A-a|^{\alpha p}
        +
	c\,|a|^{(\alpha-1)p}\,\bint_A |u-a|^p \dx 
	\\\nonumber
	&\le
        c\, \bint_A|u-a|^{\alpha p} \dx
        +
	c\, \bint_A
        \big|\power{u}{\alpha}-\power{a}{\alpha}\big|^p \dx 
	 \\
	&\le
	c\,\bint_A \big|\power{u}{\alpha}-\power{a}{\alpha}\big|^p \dx .
\end{align}
Our next goal is to derive the same bound in the
{\bf case} $\boldsymbol {\frac1p\le\alpha<1}$. We begin by applying Lemma~\ref{lem:Acerbi-Fusco} to obtain
\begin{align}\label{L26:alpha<1-start}
    \big|\power{(u)_A}{\alpha}-\power{a}{\alpha}\big|
    &\le
    c\big[ 
    |(u)_A|+|a|\big]^{\alpha-1}|(u)_A -a| \nonumber\\
    &\le 
    c\, \bint_A\big[|(u)_A|+|a|\big]^{\alpha-1}|u-a|\,dx
 \end{align}
 and distinguish between two cases. In points $x\in A$ with
 $|u(x)|<2^{\frac1\alpha}|a|$, we use the elementary bound
\begin{align*}
     |(u)_A| + |a|
     \ge
     |a|
     \ge
     c(\alpha)\big[|u(x)|+|a|\big], 
\end{align*}
the fact $\alpha-1<0$ and Lemma \ref{lem:Acerbi-Fusco} in order to estimate the above integrand by
\begin{align}\label{L26:case1}
     \big[|(u)_A| + |a|\big]^{\alpha-1}|u(x)-a|
     &\le
     c\big[|u(x)|+|a|\big]^{\alpha-1}|u(x)-a| \nonumber\\
     &\le
     c\big|\power{u(x)}{\alpha}-\power{a}{\alpha}\big|.
\end{align}
In the remaining case $|u(x)|\ge 2^{\frac1\alpha}|a|$, we have
$\frac12|u(x)|^\alpha\le |u(x)|^\alpha-|a|^\alpha$, which we
use for the estimate
\begin{align*}
     |u(x)-a|^\alpha
     &\le
     2^\alpha|u(x)|^\alpha
     \le
     2^{\alpha+1}\big[|u(x)|^\alpha-|a|^\alpha\big]
     \le
     2^{\alpha+1}\big|\power{u(x)}{\alpha}-\power{a}{\alpha}\big|.
\end{align*}
We use this and the fact $[|(u)_A|+|a|]^\alpha\ge c|\power{(u)_A}{\alpha}-\power{a}{\alpha}|$ in
order to estimate the integrand in \eqref{L26:alpha<1-start} by 
\begin{equation}\label{L26:case2}
     \big[|(u)_A| + |a|\big]^{\alpha-1}|u(x)-a|
     \le
     c \big|\power{(u)_A}{\alpha}-\power{a}{\alpha}\big|^{\frac{\alpha-1}{\alpha}}
     \big|\power{u(x)}{\alpha}-\power{a}{\alpha}\big|^{\frac1\alpha}.
\end{equation}
Now we join the two cases. In view of \eqref{L26:case1} and
\eqref{L26:case2}, the estimate \eqref{L26:alpha<1-start} yields the
 bound
   \begin{align*}
     \big|\power{(u)_A}{\alpha}-\power{a}{\alpha}\big|^p
     \le
     c\,
     \bint_A|\power{u}{\alpha}-\power{a}{\alpha}|^p\,dx
     +
     c\big|\power{(u)_A}{\alpha}-\power{a}{\alpha}\big|^{\frac{(\alpha-1)p}{\alpha}}
     \bigg[\bint_A|\power{u}{\alpha}-\power{a}{\alpha}|^{\frac1\alpha}\,dx\bigg]^p. 
\end{align*}
We multiply this inequality by
$|\power{(u)_A}{\alpha}-\power{a}{\alpha}|^{\frac{(1-\alpha)p}\alpha}$,
apply Young's inequality with exponents $\frac1{1-\alpha}$,
$\frac1\alpha$ to the first term on the right-hand side,
and H\"older's inequality with exponents
$\alpha p,\frac{\alpha p}{\alpha p-1}$ to the second term. Note that both is
possible in the case $\frac1p<\alpha<1$, while in the case $\alpha=\frac1p$ the application of H\"older's inequality is not necessary. This procedure
results in the estimate   
\begin{align*}
     \big|\power{(u)_A}{\alpha}-\power{a}{\alpha}\big|^{\frac p\alpha} 
     &\le
     c\big|\power{(u)_A}{\alpha}-\power{a}{\alpha}\big|^{\frac{(1-\alpha)p}\alpha}
     \bint_A|\power{u}{\alpha}-\power{a}{\alpha}|^p\,dx
     +
     c\bigg[\bint_A|\power{u}{\alpha}-\power{a}{\alpha}|^{\frac1\alpha}\,dx\bigg]^p
     \\
     &\le\tfrac12\big|\power{(u)_A}{\alpha}-\power{a}{\alpha}\big|^{\frac{p}\alpha}
     +
     c\bigg[\bint_A|\power{u}{\alpha}-\power{a}{\alpha}|^p\,dx\bigg]^{\frac1\alpha}. 
\end{align*}
The second-last term can be re-absorbed into the left-hand side, which leads us to 
\begin{equation}\label{L26:alpha<1}
     \big|\power{(u)_A}{\alpha}-\power{a}{\alpha}\big|^p
     \le
     c\,\bint_A|\power{u}{\alpha}-\power{a}{\alpha}|^p\,dx.
\end{equation}
This is the estimate \eqref{L26:alpha>1} now also for the case
$\frac1p\le\alpha<1$. In any case, we can apply either \eqref{L26:alpha>1} or \eqref{L26:alpha<1} to conclude
\begin{align*}
	\mint_B \big|\power{u}{\alpha}-\power{(u)_A}{\alpha}\big|^p \dx 
	&\leq 
	2^{p-1} \mint_B \big|\power{u}{\alpha}-\power{a}{\alpha}\big|^p \dx +
	2^{p-1} \big|\power{(u)_A}{\alpha}-\power{a}{\alpha}\big|^p \\
	&\leq 
	2^{p-1} \mint_B \big|\power{u}{\alpha}-\power{a}{\alpha}\big|^p \dx +
	c \mint_A \big|\power{u}{\alpha}-\power{a}{\alpha}\big|^p \dx\\
	&\leq 
	\frac{c\,|B|}{|A|} \mint_B \big|\power{u}{\alpha}-\power{a}{\alpha}\big|^p \dx ,
\end{align*}
which proves  the claim.
\end{proof}

Finally, we state Gagliardo-Nirenberg's inequality in the form we will use in the sequel.

\begin{lemma}\label{lem:gag}
Let $1\le p,q,r<\infty$ and $\theta\in(0,1)$ such that
$ - \frac{n}{p}
\le \theta (1 - \frac{n}{q} ) - ( 1- \theta ) \frac{n}{r}$. Then there exists a constant
$c=c(n,p)$ such that for any ball $B_\rho(x_o)\subset\R^n$ with $\varrho>0$ and any function $u \in W^{1,q}(B_\rho(x_o))$, we have
\begin{equation*}
	\mint_{B_\rho(x_o)} \frac{|u|^p}{\rho^p} \dx
	\le
	c\,\Bigg[
	\mint_{B_\rho(x_o)} \bigg[
	\frac{|u|^q}{\rho^q} + |Du|^q \bigg] \dx
	\Bigg]^{\frac{\theta p}{q}}
	\bigg[\mint_{B_\varrho(x_o)} \frac{|u|^r}{\rho^r} \,\dx\bigg]
	^{\frac{(1-\theta)p}{r}} .
\end{equation*}
\end{lemma}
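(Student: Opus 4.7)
The plan is to reduce the inequality on $B_\rho(x_o)$ to the same inequality on the unit ball $B_1$ via scaling, and there invoke the classical Gagliardo--Nirenberg interpolation inequality.

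For the scaling step, set $v(y) := u(x_o + \rho y)$ for $y \in B_1 := B_1(0)$. The change of variables $x = x_o + \rho y$ yields, for every $s \ge 1$,
\begin{align*}
  \mint_{B_\rho(x_o)} \frac{|u|^s}{\rho^s}\dx
  =
  \frac{1}{\rho^s}\mint_{B_1} |v|^s \dy,
  \qquad
  \mint_{B_\rho(x_o)} |Du|^s \dx
  =
  \frac{1}{\rho^s}\mint_{B_1} |Dv|^s \dy.
\end{align*}
Substituting these identities into the claimed inequality, both sides acquire an overall factor $\rho^{-p}$; on the right-hand side the exponents combine to $\rho^{-\theta p}\cdot \rho^{-(1-\theta)p} = \rho^{-p}$, matching the left. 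Thus the lemma reduces to the case $\rho = 1$, $x_o = 0$, where it reads
\begin{align*}
  \mint_{B_1} |v|^p \dy
  \le
  c \bigg[\mint_{B_1}\big[|v|^q + |Dv|^q\big] \dy\bigg]^{\theta p/q}
  \bigg[\mint_{B_1} |v|^r \dy\bigg]^{(1-\theta) p/r}
\end{align*}
for every $v \in W^{1,q}(B_1)$.

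Let $p_*$ denote the scale-invariant exponent characterized by $\tfrac{1}{p_*} = \theta\big(\tfrac{1}{q} - \tfrac{1}{n}\big) + (1-\theta)\tfrac{1}{r}$ (with $p_* = \infty$ if the right-hand side is non-positive). A direct rearrangement shows that the assumption $-\tfrac{n}{p} \le \theta(1 - \tfrac{n}{q}) - (1-\theta)\tfrac{n}{r}$ is equivalent to $p \le p_*$. On the Lipschitz domain $B_1$, the classical Gagliardo--Nirenberg inequality---obtainable, e.g., by extending $v$ to all of $\R^n$ via Stein's extension operator and applying the standard $\R^n$ version---yields
\begin{align*}
  \|v\|_{L^{p_*}(B_1)} \le c\,\|v\|_{W^{1,q}(B_1)}^{\theta}\,\|v\|_{L^r(B_1)}^{1-\theta}.
\end{align*}
Since $|B_1| < \infty$ and $p \le p_*$, H\"older's inequality supplies $\|v\|_{L^p(B_1)} \le c\,\|v\|_{L^{p_*}(B_1)}$. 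Raising the combined estimate to the $p$-th power and normalizing to mean values concludes the unit-ball case; rescaling back gives the lemma.

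The main (mild) obstacle is accounting for the inequality---rather than equality---in the exponent condition, i.e.\ the possibly subcritical regime $p < p_*$. This is disposed of by the single application of H\"older's inequality on the finite-measure domain $B_1$ indicated above. The remaining ingredients, namely the explicit scaling computation and the classical Gagliardo--Nirenberg inequality on $\R^n$, are standard.
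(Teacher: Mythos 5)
The paper states Lemma~\ref{lem:gag} without proof, so there is no internal argument to compare against. Your proof is essentially the standard one: rescale to the unit ball (where the $\rho^{-p}$ factors cancel on both sides), apply the classical Gagliardo--Nirenberg inequality via Stein extension, and absorb the gap between $p$ and the scale-critical exponent $p_*$ via H\"older on the finite-measure domain. The equivalence of the hypothesis $-\tfrac{n}{p}\le\theta(1-\tfrac{n}{q})-(1-\theta)\tfrac{n}{r}$ with $\tfrac1p\ge\tfrac1{p_*}$ is verified correctly.

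One small loose end: when the formal quantity $\theta(\tfrac1q-\tfrac1n)+(1-\theta)\tfrac1r$ is \emph{strictly} negative, setting $p_*=\infty$ and appealing to ``the classical Gagliardo--Nirenberg inequality'' with that same $\theta$ is not literally the classical statement, since the classical inequality with $p_*=\infty$ pins $\theta$ to the unique $\theta_0\in(0,1)$ satisfying $\theta_0(\tfrac1q-\tfrac1n)+\tfrac{1-\theta_0}{r}=0$, and here $\theta>\theta_0$. The desired estimate still follows by first invoking the classical inequality with $\theta_0$ and then using $\|v\|_{L^r(B_1)}\le c\|v\|_{W^{1,q}(B_1)}$ to convert the surplus $\theta-\theta_0$ of the $L^r$ factor into the $W^{1,q}$ factor; this one-line bootstrap is worth stating explicitly. (In the paper's actual use of the lemma, $\tfrac1{p_*}>0$, so this case does not arise, but as the lemma is stated the case is allowed.) Aside from that, the argument is complete and correct.
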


\section{Energy bounds}

In this section we exploit the doubly nonlinear system \eqref{eq-doubly} in order to deduce an energy estimate and a gluing lemma. These are the only points in the proof where the fact that $u$ is a solution of \eqref{eq-doubly} is used.

\begin{lemma}\label{lem:energy}
Let $p>1$ and u be a weak solution to \eqref{eq-doubly} in $\Omega_T$ in the sense of Definition~{\upshape\ref{def:weak_solution}}. 
Then, on any cylinder $Q_{R,S}(z_o):=B_R(x_o)\times\Lambda_S(t_o)\subseteq\Omega_T$ with
$R,S>0$, and for all $r\in[R/2,R)$, $s\in [S/2^p,S)$ and $a\in\R^N$,
we have
\begin{align*}
	\sup_{t \in \Lambda_s (t_o)} &
	\mint_{B_r (x_o)} 
	\frac{|\power{u}{\frac{p}{2}}(t)-\power{a}{\frac{p}{2}}|^2}{S} \,\dx +
	\biint_{Q_{r,s}(z_o)} |Du|^p \,\dx\dt \\
	&\leq 
	c\,
	\biint_{Q_{R,S}(z_o)} 
	\bigg[ 
	\frac{|\power{u}{\frac{p}{2}}-\power{a}{\frac{p}{2}}|^2}{S-s} + 
	\frac{|u-a|^p}{(R-r)^p} + |F|^p\bigg] \dx\dt,
\end{align*}
where $c=c(p,\nu,L)$. 
\end{lemma}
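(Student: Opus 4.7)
The proof is a Caccioppoli-type computation adapted to the doubly nonlinear time term. I would test the weak formulation \eqref{weak-solution} against $\varphi(x,t)=\zeta(x)^p\eta(t)(u(x,t)-a)$, where $\zeta\in C_0^\infty(B_R(x_o))$ is a standard spatial cutoff satisfying $\zeta\equiv 1$ on $B_r(x_o)$, $0\le\zeta\le 1$, and $|D\zeta|\le c/(R-r)$, while $\eta$ is a piecewise affine time cutoff that equals $1$ on $[t_o-s,\tau]$ for an arbitrary $\tau\in\Lambda_s(t_o)$, vanishes outside a slightly larger interval contained in $\Lambda_S(t_o)$, and has $|\eta'|\le c/(S-s)$ on the increasing branch. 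Since $\partial_t\power{u}{p-1}$ exists only in a distributional sense, the computation is performed on Steklov averages of \eqref{eq-doubly} and the averaging parameter is sent to zero at the end.

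The heart of the argument is the parabolic term. A direct computation based on \eqref{def:b} gives the identity
\begin{equation*}
   \int_{\tau_1}^{\tau_2}\partial_t\power{u}{p-1}\cdot(u-a)\,\dt
   =\mathfrak b[u(\tau_2),a]-\mathfrak b[u(\tau_1),a].
\end{equation*}
Combined with the cutoffs, and after squeezing the decreasing branch of $\eta$ near $\tau$ to a point (yielding a Lebesgue-point boundary term for a.e.\ $\tau$), this produces a boundary contribution $\int_{B_r(x_o)}\zeta^p\,\mathfrak b[u(\tau),a]\,\dx$ together with a bulk contribution bounded by $\frac{c}{S-s}\iint_{Q_{R,S}(z_o)}\mathfrak b[u,a]\,\dx\,\dt$. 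By Lemma~\ref{lem:basic-est-b}, each appearance of $\mathfrak b[u,a]$ is comparable to $|\power{u}{p/2}-\power{a}{p/2}|^2$, which matches precisely the supremum term on the left-hand side and the first term on the right-hand side of the claim.

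For the elliptic term I would expand $D\varphi=\zeta^p\eta\,Du+p\zeta^{p-1}\eta(u-a)\otimes D\zeta$. Coercivity in \eqref{growth-a} yields $\nu\iint\zeta^p\eta|Du|^p\,\dx\,\dt$, which after restricting integration to $Q_{r,s}(z_o)$ produces the second left-hand side term. The growth bound combined with Young's inequality with exponents $p/(p-1)$ and $p$, applied both to the cross term and to the forcing $\iint|F|^{p-2}F\cdot D\varphi\,\dx\,\dt$, delivers
\begin{equation*}
   \tfrac12\nu\iint\zeta^p\eta|Du|^p\,\dx\,\dt
   +c\iint\eta\bigl[|D\zeta|^p|u-a|^p+|F|^p\bigr]\,\dx\,\dt
\end{equation*}
once a small multiple of $\iint\zeta^p\eta|Du|^p\,\dx\,\dt$ has been absorbed into the left-hand side. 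The restrictions $r\ge R/2$ and $s\ge S/2^p$ then let me replace the normalized integrals over $Q_{r,s}$ by constant multiples of their counterparts over $Q_{R,S}$, while the factor $1/S$ in front of the supremum term appears when the unnormalized estimate is divided by $|B_r(x_o)|\,S$.

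The only non-routine step is making the time-derivative manipulation rigorous: the Steklov averaging has to be carried through so that the boundary term evaluated at level $\tau$ converges a.e.\ to $\int_{B_r(x_o)}\zeta^p\,\mathfrak b[u(\tau),a]\,\dx$, which is what allows the supremum over $\tau\in\Lambda_s(t_o)$ to survive the limit. Everything else reduces to standard Caccioppoli bookkeeping.
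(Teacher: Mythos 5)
Your proposal captures the structure of the paper's proof: a Caccioppoli computation with test function $\zeta^p\eta(u-a)$, the parabolic term rewritten through the boundary quantity $\mathfrak b[u,a]$ and then converted to $|\power{u}{p/2}-\power{a}{p/2}|^2$ via Lemma~\ref{lem:basic-est-b}, coercivity plus Young for the elliptic and forcing terms, and the comparability of $(r,s)$ with $(R,S)$ to normalize. The one genuine difference is the mollification. The paper uses the exponential time mollification $\llbracket\cdot\rrbracket_h$ precisely because, with $w$ defined by $\power{w}{p-1}=\llbracket\power{u}{p-1}\rrbracket_h$, the identity $\partial_t\power{w}{p-1}=-\tfrac1h(\power{w}{p-1}-\power{u}{p-1})$ makes the error term $\partial_t\power{w}{p-1}\cdot(u-w)=\tfrac1h(\power{u}{p-1}-\power{w}{p-1})\cdot(u-w)\ge 0$ by monotonicity of $\xi\mapsto\power{\xi}{p-1}$; the exact part $\partial_t\power{w}{p-1}\cdot(w-a)=\partial_t\mathfrak b[w,a]$ then gives the boundary term cleanly. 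With Steklov averages the displayed identity $\int_{\tau_1}^{\tau_2}\partial_t\power{u}{p-1}\cdot(u-a)\,dt=\mathfrak b[u(\tau_2),a]-\mathfrak b[u(\tau_1),a]$ does not hold as an identity at level $h>0$. One only gets one-sided estimates from convexity of $\phi_p(\xi)=\tfrac1p|\xi|^p$: testing against $u(\cdot,t)-a$ yields $\tfrac1h\big[\power{u(t+h)}{p-1}-\power{u(t)}{p-1}\big]\cdot(u(t)-a)\le\partial_t[\mathfrak b[u,a]]_h(t)$, which points the wrong way for the energy estimate, while testing against the shifted $u(\cdot,t+h)-a$ gives the opposite, usable, inequality $\ge$. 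So your plan works, but you must test with the time-shifted solution (or argue equivalently via the convexity inequality in the right direction); a naive Steklov computation with $u(t)-a$ would produce the boundary term with the wrong sign. You flag this as the non-routine step, which is fair, but it is worth knowing that the paper's choice of the exponential mollification is exactly what makes that step mechanical rather than delicate.
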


\begin{proof}[Proof.]
For $v\in L^1(\Omega_T,\R^N)$, we define the following mollification in time 
\begin{equation*}
	\llbracket v \rrbracket_h(x,t)
	:= 
	\tfrac 1h \int_0^t \mathrm e^{\frac{s-t}h} v(x,s) \, \ds.
\end{equation*}
From the weak form \eqref{weak-solution} of the differential equation we deduce the mollified version 
\begin{align}\label{mol-eq}
	\iint_{\Omega_T} &
	\Big[\partial_t \llbracket \power{u}{p{-}1}\rrbracket_h\cdot\varphi + 
	\llbracket\mathbf A(x,t,u,Du)\rrbracket_h\cdot D\varphi \Big]\dx\dt 
	\nonumber\\
	&=
	\iint_{\Omega_T} \llbracket |F|^{p-2}F\rrbracket_h\cdot D\varphi\,\dx\dt +
	\tfrac1h\int_\Omega \power{u}{p{-}1}(0)\cdot \int_0^T \mathrm e^{-\frac sh}\varphi\,\ds\,\dx,
\end{align}
for any $\varphi\in L^p(0,T;W^{1,p}_0(\Omega,\R^N))$. 
Let $\eta\in C^1_0(B_R(x_o),[0,1])$ be a cut-off function with $\eta \equiv 1$ in $B_r(x_o)$ and $|D\eta|\leq \frac{2}{R-r}$ and 
$\zeta \in W^{1,\infty} (\Lambda_S(t_o),[0,1])$ defined by 
\begin{equation*}
	\zeta(t)
	:=
	\left\{
	\begin{array}{cl}
	{\displaystyle \frac{t-t_o+S}{S-s} }&
	\mbox{for $t\in (t_o-S,t_o-s)$,}\\[10pt]
	1 & \mbox{for $t\in [t_o- s, t_o+ S)$.}	
	\end{array}
	\right.
\end{equation*}
Furthermore, for $\varepsilon >0$ small enough and $t_1 \in \Lambda_s(t_o)$ we define the function $\psi_\varepsilon \in W^{1,\infty} (\Lambda_S(t_o),[0,1])$ by 
$$
	\psi_\varepsilon(t) 
	:= 
	\left\{
	\begin{array}{cl}
	1 & \mbox{for $t\in (t_o-S, t_1]$,} \\[3pt]
	1-\frac{1}{\varepsilon} (t-t_1) & 
	\mbox{for $t\in (t_1, t_1+\varepsilon)$,} \\[3pt]
	0  & \mbox{for $t\in [t_1+\varepsilon, t_o+S)$.}
\end{array}
\right.
$$
In \eqref{mol-eq} we choose the testing function
$$
	\varphi(x,t) 
	= 
	\eta^p(x) \zeta(t)\psi_\varepsilon(t) \big(u(x,t)-a\big). 
$$
In the following we abbreviate $\power{w}{p{-}1}:=\llbracket \power{u}{p{-}1}\rrbracket_h$
and omit in the notation the reference to the center $z_o=(x_o,t_o)$. For the integral in \eqref{mol-eq} containing the time derivative we compute 
\begin{align*}
	\iint_{\Omega_T} 
	\partial_t \llbracket \power{u}{p{-}1}\rrbracket_h\cdot\varphi \,\dx\dt 
	&= 
	\iint_{Q_{R,S}}
	\eta^p \zeta \psi_\varepsilon \partial_t \power{w}{p{-}1}\cdot (w-a) 
	\dx\dt \\
	&\phantom{=\,} +
	\iint_{Q_{R,S}}
	\eta^p \zeta \psi_\varepsilon \partial_t \power{w}{p{-}1}\cdot (u-w) 
	\dx\dt \\
	&\ge 
	- \iint_{Q_{R,S}}
	\eta^p \zeta \psi_\varepsilon 
	\partial_t\Big(\tfrac{p-1}{p}|w|^{p} - \power{w}{p{-}1}\cdot a\Big) \dx\dt \\
	& = 
	- \iint_{Q_{R,S}}
	\eta^p \zeta \psi_\varepsilon 
	\partial_t \mathfrak b[w,a] \dx\dt \\
	& = 
	\iint_{Q_{R,S}} 
	\eta^p \big( \zeta \psi_\varepsilon' +\psi_\varepsilon\zeta' \big) 
	\mathfrak b[w,a] \dx\dt,
\end{align*}
where we used the identity $\partial_t \power{w}{p{-}1} = -\frac1h(\power{w}{p{-}1}-\power{u}{p{-}1})$ and recall the definition of the boundary term $\mathfrak b$ in \eqref{def:b}.
Since $\llbracket \power{u}{p{-}1}\rrbracket_h\to \power{u}{p{-}1}$ in $L^{\frac{p}{p-1}}(\Omega_T)$ we can pass to the limit $h\downarrow 0$ in the integral on the right-hand side. We therefore get
\begin{align*}
	\liminf_{h\downarrow 0}
	\iint_{Q_{R,S}} 
	\partial_t \llbracket \power{u}{p{-}1}\rrbracket_h\cdot\varphi \,\dx\dt 
	\ge
	\iint_{Q_{R,S}}
	\eta^p\big( \zeta \psi_\varepsilon' +\psi_\varepsilon\zeta' \big) 
	\mathfrak b[u, a] \,\dx\dt 
	=: 
	\mathrm{I}_{\varepsilon} +\mathrm{II}_{\varepsilon}.
\end{align*}
We now pass to the limit $\varepsilon \downarrow 0$.
 For the term $\mathrm{I}_{\varepsilon}$ we obtain for any $t_1 \in \Lambda_{s}$
 that
$$
 	\lim_{\varepsilon \downarrow 0}\mathrm{I}_\varepsilon 
	=
	\int_{B_R} \eta^p\, \mathfrak b[u(t_1), a] \d x.
$$
Taking into account that the boundary term $\mathfrak b[u,a]$ is non-negative, the term $\mathrm{II}_\varepsilon$ can be estimated independently from $\epsilon$, since
\begin{align*}
	|\mathrm{II}_\varepsilon| 
	\leq 
	\iint_{Q_{R,S}} \zeta'\, \mathfrak b[u,a]\, \dx\dt 
	\leq 
	\iint_{Q_{R,S}} 
	\frac{\mathfrak b[u,a]}{S-s}\, \dx\dt.
\end{align*}
Next, we consider the diffusion term. 
After passing to the limit $h\downarrow 0$, we use the ellipticity and growth assumption \eqref{growth-a} for the vector-field $\mathbf A$, and subsequently Young's inequality. In this way, we obtain 
\begin{align*}
	\lim_{h\downarrow 0}
	\iint_{\Omega_T} &
	\llbracket\mathbf A(x,t,u,Du)\rrbracket_h\cdot D\varphi\, \dx\dt \\
%	&=
%	\iint_{Q_{R,S}} \mathbf  A(x,t,u,Du) \cdot D \varphi \,\dx\dt \\
	&=
	\iint_{Q_{R,S}} 
	\zeta \psi_\varepsilon \mathbf A (x,t,u,Du) \cdot  
	\big[\eta^p Du + p\eta^{p-1} (u-a)\otimes D\eta\big] 
	\dx\dt \\
	&\geq  
	\nu \iint_{Q_{R,S}}
	\eta^p \zeta \psi_\varepsilon|Du|^p \dx\dt -
	Lp\iint_{Q_{R,S}} 
	\eta^{p-1} \zeta \psi_\varepsilon |D\eta| |u-a| |Du|^{p-1}
	\dx\dt \\
	&\geq   
	\tfrac \nu2 \iint_{Q_{R,S}}
	\eta^p \zeta \psi_\varepsilon |Du|^p \dx\dt -
	c \iint_{Q_{R,S}}\frac{|u-a|^p}{(R-r)^p} \dx\dt.
\end{align*}
Next, we consider the right-hand side term involving the inhomogeneity $F$. With the help of Young's inequality we find that
\begin{align*}
   	\lim_{h\downarrow0}&\bigg|\iint_{\Omega_T} \llbracket
   	|F|^{p-2}F\rrbracket_h\cdot D\varphi\,\dx\dt\bigg| \\
   	&=
   	\iint_{Q_{R,S}} 
   	\zeta \psi_\varepsilon |F|^{p-2} F \cdot
   	\big[\eta^p Du + p\eta^{p-1} (u-a)\otimes D\eta\big]
   	\dx\dt\\
   	&\le
   	\tfrac{\nu}{4}\iint_{Q_{R,S}} \eta^p\zeta \psi_\varepsilon |Du|^p\dx\dt +
    c\,\iint_{Q_{R,S}}\bigg[\frac{|u-a|^p}{(R-r)^p} + |F|^p\bigg] \dx\dt.
\end{align*}
Finally, for the last integral in \eqref{mol-eq}, the convergence of the mollifications and the fact $\varphi(0)=0$ imply 
\begin{equation*}
   	\lim_{h\downarrow0}
   	\tfrac1h\int_\Omega \power{u}{p{-}1}(0)\cdot \int_0^T \mathrm
   	e^{-\frac sh}\varphi\,\ds\,\dx
   	=
   	\int_\Omega\power{u}{p{-}1}(0)\cdot\varphi(0)\,\dx
   	=0.
\end{equation*}
Combining the preceding results and passing to the limit $\epsilon\downarrow 0$ we obtain for almost every $t_1 \in \Lambda_s$ that
\begin{equation*}
	\int_{B_r} \mathfrak b [u(t_1),a] \dx +  
	\int_{t_o-s}^{t_1}\int_{B_r}  
	|Du|^p \dx\dt 
	\leq 
	c \iint_{Q_{R,S}}
	\bigg[\frac{|u-a|^p}{(R-r)^p} +
	\frac{\mathfrak b[u,a]}{S-s}
	+ |F|^p \bigg]
	\dx\dt,
\end{equation*}
for a constant $c=c(p,\nu,L)$. 
Here we pass to the supremum over $t_1\in \Lambda_s$ in the first term on the left-hand side. In the second one we let  $t_1\uparrow t_o+s$. Finally we take mean values on both sides and apply Lemma~\ref{lem:basic-est-b} twice.  This leads to  the claimed energy estimate.
\end{proof}

Next, we deduce a gluing lemma for the doubly nonlinear system.

\begin{lemma}\label{lem:time-diff}
Let $p>1$ and $u$ be a weak solution to \eqref{eq-doubly} in $\Omega_T$ in the sense of Definition~{\upshape\ref{def:weak_solution}}. 
Then, on any cylinder $Q_{R,S}(z_o):=B_R(x_o)\times\Lambda_S(t_o)\subseteq\Omega_T$ with $R,S>0$ 
there exists $\hat r\in [\frac{R}{2},R)$ such that for all $t_1,t_2\in\Lambda_S(t_o)$, we have
\begin{align*}
	\big|\langle\power{u}{p{-}1}\rangle_{x_o;\hat r}(t_2) - 
	\langle\power{u}{p{-}1}\rangle_{x_o;\hat r}(t_1)\big| 
	&\le
	\frac{c\,S}{R}\,
	\biint_{Q_{R,S}(z_o)} \big[|Du|^{p-1} + |F|^{p-1}\big]\dx\dt ,
\end{align*}
where $c=c(L)$.
\end{lemma}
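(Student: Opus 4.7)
The plan is to test the mollified weak form of \eqref{eq-doubly} with a product test function $\varphi(x,t)=\eta(x)\psi_\varepsilon(t)$, where $\eta$ is a radial spatial cutoff (to be specified below) and $\psi_\varepsilon$ is a piecewise linear temporal cutoff approximating $\chi_{[t_1,t_2]}$ for some $t_1<t_2$ in $\Lambda_S(t_o)$. The mollification limit $h\downarrow 0$ and the limit $\varepsilon\downarrow 0$ can be carried out exactly as in the proof of Lemma~\ref{lem:energy}, with the simplification that no boundary term $\mathfrak b$ arises because $\eta$ is time independent and the initial-datum contribution vanishes since $\psi_\varepsilon$ is supported away from $t=0$. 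The outcome is, for a.e.\ choice of $t_1,t_2\in\Lambda_S(t_o)$, the identity
\begin{equation*}
    \int_{B_R(x_o)}\eta(x)\bigl[\power{u}{p-1}(x,t_2)-\power{u}{p-1}(x,t_1)\bigr]\dx
    =
    -\int_{t_1}^{t_2}\!\int_{B_R(x_o)}\bigl[\mathbf A(x,t,u,Du)-|F|^{p-2}F\bigr]\cdot D\eta\,\dx\dt,
\end{equation*}
which extends to every $t_1,t_2\in\Lambda_S(t_o)$ by the continuity of $t\mapsto\power{u}{p-1}(t)$ in $L^{p/(p-1)}(B_R)$ inherited from $u\in C^0([0,T];L^p)$.

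Next, I would take $\eta=\eta_{\hat r,\delta}$ to be the piecewise affine radial cutoff equal to $1$ on $B_{\hat r}(x_o)$, vanishing outside $B_{\hat r+\delta}(x_o)$, with $|D\eta_{\hat r,\delta}|\equiv\tfrac1\delta$ on the spherical shell. Since $\mathbf A(\cdot,u,Du),|F|^{p-2}F\in L^{p/(p-1)}(Q_{R,S}(z_o))$, Lebesgue differentiation in the radial variable permits passing to the limit $\delta\downarrow 0$, yielding for a.e.\ $\hat r\in(0,R)$ the spherical identity
\begin{equation*}
    \int_{B_{\hat r}(x_o)}\bigl[\power{u}{p-1}(t_2)-\power{u}{p-1}(t_1)\bigr]\dx
    =
    \int_{t_1}^{t_2}\!\int_{\partial B_{\hat r}(x_o)}\bigl[\mathbf A-|F|^{p-2}F\bigr]\cdot\tfrac{x_o-x}{|x-x_o|}\,\dHn\,\dt.
\end{equation*}

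To pick a useful radius, I would apply the growth estimate $|\mathbf A(\cdot)|\le L|Du|^{p-1}$ from \eqref{growth-a} together with Fubini in the radial variable, noting that
\begin{equation*}
    \int_{R/2}^{R}\!\int_{\partial B_r(x_o)}\bigl[|Du|^{p-1}+|F|^{p-1}\bigr]\dHn\dr
    =
    \int_{B_R(x_o)\setminus B_{R/2}(x_o)}\bigl[|Du|^{p-1}+|F|^{p-1}\bigr]\dx.
\end{equation*}
By the mean value principle some $\hat r\in[R/2,R)$ satisfies $\int_{\partial B_{\hat r}}[\cdots]\,\dHn\le\tfrac2R\int_{B_R}[\cdots]\,\dx$. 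Plugging this into the spherical identity, integrating over $[t_1,t_2]\subseteq\Lambda_S(t_o)$ (which has length at most $2S$, so the time integral is dominated by the integral over $\Lambda_S$), dividing through by $|B_{\hat r}|\ge c(n)R^n$, and rewriting the right-hand side as a mean over $Q_{R,S}(z_o)$ (whose measure is $\simeq R^n S$) produces exactly the claimed inequality with a constant depending only on $L$.

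The main technical subtlety will be the Lebesgue-differentiation step that converts the volume identity with $\eta_{\hat r,\delta}$ into the spherical identity for a.e.\ $\hat r$: it rests on showing that the relevant integrands are integrable on almost every sphere centered at $x_o$, which follows from Fubini applied to $\mathbf A(\cdot,u,Du),|F|^{p-1}\in L^1(Q_{R,S}(z_o))$. A second minor point is that $\eta_{\hat r,\delta}$ is only Lipschitz, so the choice has to be legitimized either by smoothing or by the standard density of $C_0^\infty$ in $W^{1,p}_0(B_R)$; both routes are routine and do not affect any of the constants.
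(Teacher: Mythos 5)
Your proposal is correct and follows essentially the same route as the paper: test the weak formulation with the product of a temporal cutoff approximating $\chi_{[t_1,t_2]}$ and a radial spatial cutoff, shrink the radial annulus to obtain a spherical identity, and select $\hat r\in[\tfrac R2,R)$ by a Fubini/mean-value argument using the growth bound \eqref{growth-a}$_2$. Two small remarks: the detour through the mollified form \eqref{mol-eq} is not needed here, since the test function is differentiable in time and the time derivative remains on $\varphi$ -- the paper tests \eqref{weak-solution} directly with (Lipschitz regularizations of) $\xi_\epsilon\psi_\delta e_i$; and there is a sign slip in your spherical identity, which should read $\mathbf A+|F|^{p-2}F$ (after moving the right-hand side of \eqref{weak-solution} across), though this is immaterial once absolute values are taken.
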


\begin{proof}
Let $t_1,t_2\in\Lambda_S(t_o)$ with $t_1<t_2$ and assume that $r \in [\frac{R}{2}, R)$. For  $0<\varepsilon,\delta \ll 1$, we define $\xi_\epsilon\in W^{1,\infty}(\Lambda_S(t_o))$ by 
$$
	\xi_\epsilon(t)
	:=
	\left\{
	\begin{array}{cl}
	0 ,& \mbox{for $t_o-S\le t\le t_1-\epsilon$,}\\[3pt]
	\frac{t-t_1+\epsilon}{\epsilon}, & \mbox{for $t_1-\epsilon< t< t_1$,}\\[3pt]
	1 ,& \mbox{for $t_1\le t\le t_2$,}\\[3pt]
	\frac{t_2+\epsilon-t}{\epsilon}, & \mbox{for $t_2< t< t_2+\epsilon$,}\\[3pt]
	0 ,& \mbox{for $t_2+\epsilon\le t\le t_o+S$}
	\end{array}
	\right.
$$
and a radial function $\Psi_\delta \in W^{1,\infty}_0(B_{r+\delta}(x_o))$ by $\Psi_\delta(x):=\psi_\delta(|x-x_o|)$, where
$$
	\psi_\delta(s)
	:=
	\left\{
	\begin{array}{cl}
	1 ,& \mbox{for $0\le s\le r$,}\\[3pt]
	\frac{r+\delta-s}{\delta}, & \mbox{for $r< s< r+\delta$,}\\[3pt]
	0 ,& \mbox{for $r+\delta\le s\le R$.}
	\end{array}
	\right.
$$
For fixed $i\in\{1,\dots,N\}$ we choose $\varphi_{\epsilon,\delta}=\xi_\epsilon\psi_\delta e_i$ as testing function in the weak formulation \eqref{weak-solution}, where $e_i$ denotes the $i$-th canonical basis
vector in $\R^N$. In the limit $\varepsilon,\delta \downarrow 0$ we obtain
\begin{align*}
  \int_{B_{r}(x_o)} &\big[\power{u}{p{-}1}(t_2)-\power{u}{p{-}1}(t_1)\big]\cdot e_i \,\d x \\
  &= 
  \int_{t_1}^{t_2}\int_{\partial B_{r}(x_o)}
  \big[\mathbf A(x,t,u,Du) + |F|^{p-2}F\big]\cdot e_i\otimes\frac{x-x_o}{|x-x_o|}
  \,\d \mathcal{H}^{n-1}(x)\d t ,
\end{align*}
for a.e. $r\in[\frac R2,R)$.  
Multiplying the preceding inequality by $e_i$ and summing over $i=1,\dots ,N$ yields
\begin{align*}
	\int_{B_{r}(x_o)} &\big[\power{u}{p{-}1}(t_2)-\power{u}{p{-}1}(t_1)\big] \d x \\
	&= 
	\int_{t_1}^{t_2}\int_{\partial B_{r}(x_o)}
	\big[\mathbf A(x,t,u,Du) + |F|^{p-2}F\big] \frac{x-x_o}{|x-x_o|} 
	\,\d \mathcal{H}^{n-1}(x)\d t.
\end{align*} 
Due to growth condition \eqref{growth-a}$_2$ we get  
for any $t_1,t_2\in\Lambda_S(t_o)$ and a.e. $r \in [\frac{R}{2}, R)$ that 
\begin{align*}
	\bigg|\int_{B_{r}(x_o)} &\big[\power{u}{p{-}1}(t_2)-\power{u}{p{-}1}(t_1)\big] \d x\bigg| 
	\le
	\int_{\Lambda_S(t_o)}\int_{\partial B_{r}(x_o)}
	\big[L|Du|^{p-1} + |F|^{p-1}\big]\,\d \mathcal{H}^{n-1}\d t .
\end{align*} 
Since 
\begin{align*}
	\int_{\Lambda_S(t_o)} & \int_{B_R(x_o)}
	\big[L|Du|^{p-1} + |F|^{p-1}\big] \dx\dt	\\
	&=
	\int_0^R\int_{\Lambda_S(t_o)}\int_{\partial B_{r}(x_o)} 
	\big[L|Du|^{p-1} + |F|^{p-1}\big] \d \mathcal{H}^{n-1}\d t\d r \\
	&\ge
	\int_{R/2}^R\int_{\Lambda_S(t_o)}\int_{\partial B_{r}(x_o)} 
	\big[L|Du|^{p-1} + |F|^{p-1}\big] \d \mathcal{H}^{n-1}\d t\d r ,
\end{align*}
there exists  a radius $\hat r \in [\frac{R}{2},R)$ with
\begin{align*}
	\int_{\Lambda_S(t_o)} & \int_{\partial B_{\hat r}(x_o)} 
	\big[L|Du| ^{p-1}+ |F|^{p-1}\big] \d \mathcal{H}^{n-1}\d t \\
	&\leq 
	\tfrac{2}{R} \int_{\Lambda_S(t_o)}\int_{B_R(x_o)}
	\big[L|Du| ^{p-1}+ |F|^{p-1}\big] \dx\dt.
\end{align*}
In the above inequality, we choose $r=\hat r$  and then take means on
both sides of the resulting estimate.
This implies 
\begin{align*}
	\big|\langle\power{u}{p{-}1}\rangle_{x_o;\hat r}(t_2)-
	\langle\power{u}{p{-}1}\rangle_{x_o;\hat r}(t_1)\big| 
	&\leq  
	\frac{c}{R} \int_{\Lambda_S(t_o)}\bint_{B_R(x_o)} 
	\big[|Du|^{p-1} + |F|^{p-1}\big] \dx\dt \\
	&=
	\frac{c\,S}R
	\biint_{Q_{R,S}(z_o)} 
	\big[|Du|^{p-1} + |F|^{p-1}\big] \dx\dt,
\end{align*}
for any $t_1,t_2\in\Lambda_S(t_o)$ and with a constant $c=c(L)$. 
\end{proof}

\section{Parabolic Sobolev-Poincar\'e type inequalities}\label{sec:poin}

One of the difficulties in the parabolic setting is that weak solutions are not necessarily differentiable with respect to time. As a consequence, the Sobolev-Poincar\'e inequality on $\R^{n+1}$ is not applicable. Since such an inequality is indispensable in the proof of the higher integrability we will derive some type of Poincar\'e and Sobolev-Poincar\'e inequality which is valid for weak solutions. The idea is to use the Gluing Lemma~\ref{lem:time-diff} in order to manage the lack of differentiability with respect to time. 

Throughout this section we consider scaled cylinders $Q_{\rho}^{(\mu)}(z_o)\subseteq\Omega_T$ as defined in \eqref{cylinder} on which  certain intrinsic, respectively sub-intrinsic couplings with respect to $u$ and its spatial gradient $Du$ hold true.  
For $\rho,\mu>0$ we assume that
\begin{equation}\label{subintrinsic-mu-poin}
	\frac{\displaystyle{\biint_{Q_{\rho}^{(\mu)}(z_o)} 
	\frac{\abs{u}^{p}}{\mu^{2-\pf}\rho^p}\dx\dt}}{\displaystyle{\biint_{Q_{\rho}^{(\mu)}(z_o)} \big[ |Du|^p +|F|^p\big]\dx\dt}}
	\le 
	K\mu^{p}
\end{equation}
holds true for a constant $K\ge 1$. Recall that $\pf=\max\{2,p\}$. Such cylinders are termed $\mu$-sub-intrinsic. Furthermore, we assume that either 
\begin{equation}\label{superintrinsic-mu-poin}
	\mu^{p}
	\le
	K
	\frac{\displaystyle{\biint_{Q_{\rho}^{(\mu)}(z_o)} 
	\frac{\abs{u}^{p}}{\mu^{2-\pf}\rho^p}\dx\dt}}{\displaystyle{\biint_{Q_{\rho}^{(\mu)}(z_o)} \big[ |Du|^p +|F|^p\big]\dx\dt}}
	\qquad\mbox{or}\qquad
	\mu^{p}
	\le
	K 
\end{equation}
holds true. A cylinder $Q_{\rho}^{(\mu)}(z_o)$ satisfying
\eqref{superintrinsic-mu-poin}$_1$ is called $\mu$-super-intrinsic. Finally, a cylinder which is $\mu$-sub- and $\mu$-super-intrinsic is called $\mu$-intrinsic. In
the following we distinguish the cases whether the growth exponent $p$
is sub- or superquadratic.
In order to emphasize the stability of the proof when $p\to2$, we include the quadratic case $p=2$ in both subsections. 

\subsection{The case $\pmb{\max\{\frac{2n}{n+2},1\}<p\le 2}$}

As a first preliminary result, we compare the first and the second
term on the right-hand side of the energy inequality in
Lemma~\ref{lem:energy}. It turns out that for $p\in(1,2]$ on
$\mu$-sub-intrinsic cylinders the second term can easily be bounded in terms of the first one.

\begin{lemma}\label{lem:2<1-1}
Let $1<p\le2$ and $u$ be a weak solution to \eqref{eq-doubly} in $\Omega_T$ in the sense of Definition~{\upshape\ref{def:weak_solution}}. 
Then, on any cylinder $Q_{\rho}^{(\mu)}(z_o)\subseteq\Omega_T$ with $\rho,\mu>0$ satisfying
the $\mu$-sub-intrinsic coupling \eqref{subintrinsic-mu-poin}, we have
\begin{align*}
    &\biint_{Q_\rho^{(\mu)}(z_o)} 
	\frac{\big|u-(u)_{z_o;\rho}^{(\mu)}\big|^{p}}{\rho^p}\, \dx\dt\\
	&\quad\le
	c \Bigg[
    \biint_{Q_\rho^{(\mu)}(z_o)}
    \frac{\big|\power{u}{\frac{p}{2}}-
    \power{\big[(u)_{z_o;\rho}^{(\mu)}\big]}{\frac p2}\big|^2}
    {\mu^{p-2}\rho^p} \dx\dt
    \Bigg]^{\frac p2}
    \bigg[\biint_{Q_{\rho}^{(\mu)}(z_o)} \big[ |Du|^p +|F|^p\big]\dx\dt
	\bigg]^\frac{2-p}{2}
\end{align*}
where $c= c(p,K)$. 
\end{lemma}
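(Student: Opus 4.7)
The estimate we need to establish compares the $L^p$-oscillation of $u$ with the $L^2$-oscillation of $\power{u}{p/2}$, modulated by the $L^p$-norm of $Du$ and $F$. Writing $a := (u)_{z_o;\rho}^{(\mu)}$, the natural starting point is Lemma~\ref{lem:Acerbi-Fusco} with $\alpha = \frac p2 \le 1$, which yields the pointwise bound
\[
    |u-a| \le c\,(|u|+|a|)^{1-\frac p2}\,\big|\power{u}{\frac p2}-\power{a}{\frac p2}\big|.
\]
Raising this to the $p$-th power and artificially introducing the required normalization $\mu^{p-2}\rho^p$, I would rewrite
\[
    \frac{|u-a|^p}{\rho^p}
    \le
    c\,\bigg[\frac{|\power{u}{\frac p2}-\power{a}{\frac p2}|^2}{\mu^{p-2}\rho^p}\bigg]^{\frac p2}
    \cdot\frac{(|u|+|a|)^{p-\frac{p^2}{2}}}{\mu^{\frac{(2-p)p}{2}}\rho^{p-\frac{p^2}{2}}},
\]
where a quick check on the exponents of $\mu$ and $\rho$ confirms the identity.

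The second step is an application of H\"older's inequality with the dual exponents $\frac 2p$ and $\frac 2{2-p}$ (which are admissible thanks to $p\le 2$; the extremal case $p=2$ trivializes). This separates the two factors and, after a straightforward simplification of the exponents, produces
\[
    \biint_{Q_\rho^{(\mu)}(z_o)}\frac{|u-a|^p}{\rho^p}\,\dx\dt
    \le
    c\,\bigg[\,\biint_{Q_\rho^{(\mu)}(z_o)}\frac{|\power{u}{\frac p2}-\power{a}{\frac p2}|^2}{\mu^{p-2}\rho^p}\dx\dt\bigg]^{\frac p2}
    \cdot\bigg[\,\biint_{Q_\rho^{(\mu)}(z_o)}\frac{(|u|+|a|)^p}{\mu^p\rho^p}\dx\dt\bigg]^{\frac{2-p}{2}},
\]
so the remaining task is to control the last factor by $\biint(|Du|^p+|F|^p)\dx\dt$.

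The final step is where the $\mu$-sub-intrinsic hypothesis enters. Since $p\le 2$, we have $\pf=2$ and hence $\mu^{2-\pf}=1$, so assumption \eqref{subintrinsic-mu-poin} is precisely
\[
    \biint_{Q_\rho^{(\mu)}(z_o)}\frac{|u|^p}{\mu^p\rho^p}\,\dx\dt
    \le
    K\,\biint_{Q_\rho^{(\mu)}(z_o)}\big[|Du|^p+|F|^p\big]\dx\dt.
\]
Jensen's inequality applied to the convex function $t\mapsto |t|^p$ gives $|a|^p\le (|u|^p)_{z_o;\rho}^{(\mu)}$, so the same bound holds for $|a|^p/(\mu^p\rho^p)$. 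Adding these two estimates and using the elementary inequality $(|u|+|a|)^p\le 2^{p-1}(|u|^p+|a|^p)$ yields the required control of the second bracket, and combining with the previous display delivers the lemma.

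I do not anticipate any serious obstacle: the whole argument is essentially an algebraic manipulation plus H\"older, with the sub-intrinsic coupling providing the absorption of the geometric factor at the very end. The only point requiring care is the accurate tracking of the powers of $\mu$ and $\rho$ so that the normalizations on both sides of the final inequality match up as stated.
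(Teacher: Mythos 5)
Your proof is correct and follows essentially the same route as the paper's: the pointwise bound from Lemma~\ref{lem:Acerbi-Fusco} (you use $\alpha=\tfrac p2$ applied to $u,a$, while the paper uses the dual $\alpha=\tfrac2p$ applied to $\power{u}{p/2},\power{a}{p/2}$, giving the same inequality), then H\"older with exponents $\tfrac2p,\tfrac2{2-p}$, then the sub-intrinsic coupling together with Jensen's inequality for $|a|^p$. The only cosmetic difference is that you distribute the $\mu$-weights at the pointwise stage, whereas the paper introduces them after H\"older; both bookkeepings close identically.
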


\begin{proof}
For simplicity in notation, we omit the reference point $z_o$. Due to Lemma~\ref{lem:Acerbi-Fusco} applied with $\alpha=\frac2p$ and H\"older's inequality, we obtain
\begin{align*}
	&\biint_{Q_\rho^{(\mu)}} 
	\frac{\big|u-(u)_{\rho}^{(\mu)}\big|^{p}}{\rho^p}\, \dx\dt \\
	&\quad\le 
	c\,\biint_{Q_\rho^{(\mu)}} 
	\frac{\big|\power{u}{\frac p2}-
	\power{\big[(u)_{\rho}^{(\mu)}\big]}{\frac p2}\big|^{p}
	\big[|u|+|(u)_{\rho}^{(\mu)}|\big]^{\frac{p(2-p)}{2}}}
	{\rho^p}\, \dx\dt \\
	&\quad\le 
	c\,\Bigg[
	\biint_{Q_\rho^{(\mu)}} 
	\frac{\big|\power{u}{\frac p2}-
	\power{\big[(u)_{\rho}^{(\mu)}\big]}{\frac p2}\big|^{2}}
	{\rho^p}\, \dx\dt \Bigg]^{\frac p2} 
	\Bigg[
	\biint_{Q_\rho^{(\mu)}} 
	\frac{\big[|u|+|(u)_{\rho}^{(\mu)}|\big]^{p}}
	{\rho^p}\, \dx\dt \Bigg]^\frac{2-p}{2} .
\end{align*}
For the last integral, hypothesis~\eqref{subintrinsic-mu-poin} yields
\begin{align*}
	\biint_{Q_\rho^{(\mu)}} 
	\frac{\big[|u|+|(u)_{\rho}^{(\mu)}|\big]^{p}}
	{\rho^p}\, \dx\dt 
	&\le 
	2^p
	\biint_{Q_\rho^{(\mu)}} 
	\frac{|u|^{p}}
	{\rho^p}\, \dx\dt  \\
	&\le 
	2^p K\mu^p
	\biint_{Q_{\rho}^{(\mu)}} \big[ |Du|^p +|F|^p\big]\dx\dt .
\end{align*}
Inserting this above proves the claim with a constant $c$ depending only on $p$ and $K$.
\end{proof}

The next lemma should be interpreted as a parabolic Poincar\'e inequality for solutions on $\mu$-sub-intrinsic cylinders. The fact that weak solutions do not necessarily possess a weak time derivative is compensated by the Gluing Lemma~\ref{lem:time-diff}. However, the gluing lemma provides an estimate for time differences of slice-wise means of $\power{u}{p{-}1}$ rather than $u$. Therefore, mean values of $\power{u}{p{-}1}$ and $u$ have to be estimated very carefully against each other.

\begin{lemma}\label{lem:poin-1}
Let $1<p\le2$ and $u$ be a weak solution to \eqref{eq-doubly} in $\Omega_T$ in the sense of Definition~{\upshape\ref{def:weak_solution}}. 
Then, on any cylinder $Q_{\rho}^{(\mu)}(z_o)\subseteq\Omega_T$ satisfying
the $\mu$-sub-intrinsic coupling \eqref{subintrinsic-mu-poin}, the inequality  
\begin{align*}
	&\biint_{Q_\rho^{(\mu)}(z_o)} 
	\frac{\big|u-(u)_{z_o;\rho}^{(\mu)}\big|^{q}}{\rho^q}\, \dx\dt \\
	&\quad\le
	c\,\bigg[\biint_{Q_{\rho}^{(\mu)}(z_o)} 
	\big[|Du|^{q} + |F|^{q}\big]\dx\dt
	\bigg]^{p-1}
	\bigg[\biint_{Q_{\rho}^{(\mu)}(z_o)} 
	\big[ |Du|^p +|F|^p\big]\dx\dt
	\bigg]^{\frac{q(2-p)}{p}}  ,
\end{align*}
holds true for any $q\in[1,p]$ and a constant $c=c(n,p,L,K)$. 
\end{lemma}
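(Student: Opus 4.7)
The plan is to reduce the estimate, via Jensen's inequality, to a bound on $\biint_{Q_\rho^{(\mu)}(z_o)}|u-a|^q/\rho^q\,\dx\dt$ for a suitably chosen constant $a\in\R^N$, and then to split the integrand into a spatial part handled by spatial Poincar\'e on time slices and a temporal part handled by Lemma~\ref{lem:time-diff} in combination with the sub-intrinsic coupling~\eqref{subintrinsic-mu-poin}. Lemma~\ref{lem:time-diff} applied with $R=\rho$ and $S=\mu^{p-2}\rho^p$ yields a radius $\hat r\in[\rho/2,\rho)$ on which the slicewise means $t\mapsto\langle\power{u}{p-1}\rangle_{x_o;\hat r}(t)$ have controlled time variation; I then set $a:=\bint_{\Lambda_\rho^{(\mu)}(t_o)}\langle u\rangle_{x_o;\hat r}(t_1)\,\dt_1$, and a triangle inequality decomposes $|u(x,t)-a|^q$ into a spatial piece $|u(x,t)-\langle u\rangle_{x_o;\hat r}(t)|^q$ and a temporal piece $|\langle u\rangle_{x_o;\hat r}(t)-a|^q$.

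For the spatial piece, standard spatial Poincar\'e on $B_\rho(x_o)$ combined with the comparison of slicewise means on $B_\rho$ versus $B_{\hat r}$ (possible since $\hat r\ge\rho/2$) yields $\biint|u-\langle u\rangle_{x_o;\hat r}(t)|^q\,\dx\dt\le c\rho^q\biint|Du|^q\,\dx\dt$. Writing $X:=\biint(|Du|^q+|F|^q)$ and $Y:=\biint(|Du|^p+|F|^p)$, the trivial bound $\biint|Du|^q\le X$ together with the H\"older bound $\biint|Du|^q\le Y^{q/p}$ (valid since $q\le p$) gives
\[
\biint|Du|^q = \bigl(\biint|Du|^q\bigr)^{p-1}\bigl(\biint|Du|^q\bigr)^{2-p} \le X^{p-1}Y^{q(2-p)/p},
\]
matching the RHS structure.

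For the temporal piece, Jensen on the average defining $a$ gives $\bint|\langle u\rangle(t)-a|^q\,\dt\le\bint\bint|\langle u\rangle(t)-\langle u\rangle(t_1)|^q\,\dt\,\dt_1$. Bounding $|\langle u\rangle(t)-\langle u\rangle(t_1)|$ by the spatial average of the pointwise differences of $u$, I apply Lemma~\ref{lem:Acerbi-Fusco} with $\alpha=1/(p-1)\ge 1$ (which is the key restriction forcing $p\le 2$) to obtain
\[
|u(x,t)-u(x,t_1)|\le c\bigl(|u(x,t)|+|u(x,t_1)|\bigr)^{2-p}\bigl|\power{u(x,t)}{p-1}-\power{u(x,t_1)}{p-1}\bigr|.
\]
A triangle decomposition via $\langle\power{u}{p-1}\rangle_{x_o;\hat r}(t)$ and $\langle\power{u}{p-1}\rangle_{x_o;\hat r}(t_1)$ splits the last factor into two spatial variations (at times $t$ and $t_1$) plus a time difference of the slicewise mean controlled by Lemma~\ref{lem:time-diff}, and the algebraic identity
\[
|u|^{2-p}\bigl|\power{u}{p-1}-\power{v}{p-1}\bigr|\le c|u-v|,
\]
which is a direct consequence of Lemma~\ref{lem:Acerbi-Fusco} via the cancellation $(|u|/(|u|+|v|))^{2-p}\le 1$ valid for $p\le 2$, absorbs the $(|u|+|v|)^{2-p}$ weight in the spatial-variation terms and reduces them to standard spatial Poincar\'e-Wirtinger estimates. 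In the remaining time-difference contribution, H\"older's inequality with exponents $p/(q(2-p))$ and its conjugate $p/(p-q(2-p))$ separates the $(|u|+|v|)^{q(2-p)}$ factor, bounded via Jensen by $c(\biint|u|^p)^{q(2-p)/p}$ and then, via~\eqref{subintrinsic-mu-poin}, by $c\mu^{q(2-p)}\rho^{q(2-p)}Y^{q(2-p)/p}$, from the time-difference factor, which after using $\biint|Du|^{p-1}\le X^{(p-1)/q}$ (H\"older's inequality, valid since $p-1\le 1\le q$) becomes $c\mu^{q(p-2)}\rho^{q(p-1)}X^{p-1}$. The $\mu$-exponents cancel exactly ($q(2-p)+q(p-2)=0$) and the $\rho$-exponents combine to $\rho^q$, absorbed by the $1/\rho^q$ on the LHS, yielding the required bound.

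The hardest part will be the delicate bookkeeping required to ensure that the various contributions---spatial variations, gluing estimate, and the sub-intrinsic-controlled $(|u|+|v|)^{2-p}$ weight---combine so that their $\mu$-exponents cancel and the $\rho$-powers yield the correct scaling. This is made possible precisely because the restriction $p\le 2$ permits both the application of Lemma~\ref{lem:Acerbi-Fusco} with $\alpha=1/(p-1)\ge 1$ and the pointwise algebraic cancellation above; a parallel argument using a different splitting will be needed for the superquadratic case $p\ge 2$ treated in the next subsection.
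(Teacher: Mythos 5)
Your overall plan---split into a spatial piece (handled by Poincar\'e) and a temporal piece (handled by the Gluing Lemma together with the sub-intrinsic coupling)---is the right idea, and your bookkeeping of $\mu$- and $\rho$-powers in the temporal contribution is correct. However, there is a genuine gap in the order in which you apply Lemma~\ref{lem:Acerbi-Fusco} relative to the triangle decomposition.

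You first apply Lemma~\ref{lem:Acerbi-Fusco} pointwise to get
$|u(x,t)-u(x,t_1)|\le c\,(|u(x,t)|+|u(x,t_1)|)^{2-p}|\power{u(x,t)}{p{-}1}-\power{u(x,t_1)}{p{-}1}|$,
and only afterwards decompose the difference of $\power{u}{p{-}1}$ via the slicewise means $\langle\power{u}{p{-}1}\rangle_{\hat r}(t)$ and $\langle\power{u}{p{-}1}\rangle_{\hat r}(t_1)$. The resulting spatial-variation terms such as
$(|u(x,t)|+|u(x,t_1)|)^{2-p}\,\big|\power{u(x,t)}{p{-}1}-\langle\power{u}{p{-}1}\rangle_{\hat r}(t)\big|$
carry the weight $(|u(x,t)|+|u(x,t_1)|)^{2-p}$, which involves \emph{both} time slices, whereas the second factor involves only time $t$. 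The identity $|u|^{2-p}|\power{u}{p{-}1}-\power{v}{p{-}1}|\le c|u-v|$ you invoke requires that the weight be $|u(x,t)|^{2-p}$; since $2-p\ge 0$, you have $(|u(x,t)|+|u(x,t_1)|)^{2-p}\ge|u(x,t)|^{2-p}$, which is the \emph{wrong} direction for the absorption, and there is no pointwise bound that discards the $|u(x,t_1)|$-part of the weight. A case analysis ($|u(x,t)|\gtrless|u(x,t_1)|$) does not help either: in each case one of the two spatial-variation terms remains weighted by a power of the \emph{other} time slice. Without the absorption, these spatial-variation terms would require a Poincar\'e inequality for $\power{u}{p{-}1}$ itself, which is not available (for $1<p<2$ one cannot control $D\power{u}{p{-}1}$ from $Du\in L^p$).

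The paper's proof avoids this by reversing the two steps: it first does the triangle decomposition at the level of $u$ itself, with the slicewise intermediate value $c(t):=\power{\langle\power{u}{p{-}1}\rangle_{\hat\rho}}{\frac{1}{p-1}}(t)$, and with the global constant $a:=\power{\big[(\power{u}{p{-}1})_{\hat\rho}^{(\mu)}\big]}{\frac{1}{p-1}}$ (the $\frac{1}{p-1}$-power of the space-time mean of $\power{u}{p{-}1}$). The spatial piece $u-c(t)$ is then reduced to $u-\langle u\rangle_\rho(t)$ by the quasi-minimality Lemma~\ref{lem:alphalemma} (with $\alpha=\frac{1}{p-1}\ge\frac1q$) and handled by ordinary Poincar\'e. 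The temporal piece $c(t)-a$ involves only the $x$-independent slicewise mean $m(t):=\langle\power{u}{p{-}1}\rangle_{\hat\rho}(t)$, and it is \emph{there} that Lemma~\ref{lem:Acerbi-Fusco} with $\alpha=\frac{1}{p-1}$ is applied; the resulting weight $(|m(t)|+|M|)^{\frac{2-p}{p-1}}$ multiplies the factor $|m(t)-M|$ that Lemma~\ref{lem:time-diff} directly controls, so nothing has to be absorbed into a spatial-variation term. Your own estimates for the temporal factor and the H\"older/sub-intrinsic bookkeeping would go through essentially unchanged once the decomposition is reorganized in this way, so the fix is structural rather than computational.
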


\begin{proof}
In the proof we renounce again to consider the center $z_o$ in the notation.
With $\hat\rho\in [\frac\rho2, \rho)$ we denote the radius from Lemma \ref{lem:time-diff}.
We start by estimating the left-hand side with the help of the
quasi-minimality of the mean value as follows
\begin{align*}
     \biint_{Q_\rho^{(\mu)}}&
     \frac{\big|u-(u)_{\rho}^{(\mu)}\big|^q}{\rho^q}\,\d x\d t
     \le
     c\,\biint_{Q_\rho^{(\mu)}}
     \frac{\big|u-
     \power{\big[(u^{p-1})_{\hat\rho}^{(\mu)}\big]}{\frac{1}{p-1}}
     \big|^q}
     {\rho^{q}}\,\d x\d t 
     \le 
     c\,[\mathrm{I}+\mathrm{II}],
\end{align*}
where we abbreviated
\begin{align*}
	\mathrm{I}
    &:=
    \biint_{Q_\rho^{(\mu)}}
	\frac{\big|u-\power{\langle u^{p-1}\rangle_{\hat\rho}}{\frac{1}{p-1}}(t)\big|^q}
    {\rho^{q}}\,\d x\d t,\nonumber\\
    \mathrm{II}
    &:=
    \bint_{\Lambda_\rho^{(\mu)}}
    \frac{\big|\power{\langle u^{p-1}\rangle_{\hat\rho}}{\frac{1}{p-1}}(t)-
    \power{\big[(u^{p-1})_{\hat\rho}^{(\mu)}\big]}{\frac{1}{p-1}}\big|^q}
    {\rho^{q}}\,\d t.
\end{align*}
Next, we treat the terms $\mathrm{I}$ and $\mathrm{II}$ of the right-hand side.
For the term $\mathrm{I}$ we first recall that $\hat\rho\in [\frac{\rho}{2},\rho]$. Therefore, the application of Lemma \ref{lem:alphalemma} with $\alpha= \frac{1}{p-1}\ge \frac1q$ and subsequently Poincar\'e's inequality leads to
\begin{align*}
	\mathrm{I}
	\le
	c\,\biint_{Q_\rho^{(\mu)}}
	\frac{|u-\langle u\rangle_{\rho}(t)|^q}{\rho^{q}}\,\dx\dt 
	\le
	c\,\biint_{Q_\rho^{(\mu)}}
	|Du|^q\,\dx\dt  ,
\end{align*}
for a constant $c=c(n,p)$. Note that $q\in[1,p]$ and the constant in Poincar\'e's inequality depends continuously on $q$. Now we will treat $\mathrm{II}$.
An application of Lemma~\ref{lem:Acerbi-Fusco} with $\alpha=\frac{1}{p-1}\ge 1$ and subsequently H\"older's inequality yields
\begin{align*}
    \mathrm{II}
    &\le
    \frac{c}{\rho^q} \bint_{\Lambda_\rho^{(\mu)}}
    \Big[\big|\langle\power{u}{p{-}1}\rangle_{\hat\rho}(t)\big| + 
    \big|(\power{u}{p{-}1})_{\hat\rho}^{(\mu)}\big|\Big]^{\frac{q(2-p)}{p-1}}
    \big|\langle\power{u}{p{-}1}\rangle_{\hat\rho}(t)-
    (\power{u}{p{-}1})_{\hat\rho}^{(\mu)}\big|^q\d t \\
    &\le
    \frac{c}{\rho^q} 
    \sup_{t,\tau\in \Lambda_\rho^{(\mu)}} \!
    \big|\langle\power{u}{p{-}1}\rangle_{\hat\rho}(t)-
    \langle\power{u}{p{-}1}\rangle_{\hat\rho}(\tau)\big|^q
    \bint_{\Lambda_\rho^{(\mu)}}\!\!
    \Big[\big|\langle\power{u}{p{-}1}\rangle_{\hat\rho}(t)\big| + 
    \big|(\power{u}{p{-}1})_{\hat\rho}^{(\mu)}\big|\Big]^{\frac{q(2-p)}{p-1}} \dt \\
    &\le
    \frac{c}{\rho^{q(p-1)}} 
    \sup_{t,\tau\in \Lambda_\rho^{(\mu)}} 
    \big|\langle\power{u}{p{-}1}\rangle_{\hat\rho}(t)-
    \langle\power{u}{p{-}1}\rangle_{\hat\rho}(\tau)\big|^q
    \bigg[\biint_{Q_\rho^{(\mu)}} \frac{|u|^p}{\rho^p} \dx\dt
    \bigg]^{\frac{q(2-p)}{p}} ,
\end{align*}
for a constant $c(p)$. We continue estimating the right-hand side with the help of the Gluing Lemma~\ref{lem:time-diff}, the $\mu$-sub-intrinsic coupling \eqref{subintrinsic-mu-poin} and H\"older's inequality. In this way we find 
\begin{align*}
    \mathrm{II}
    &\le
    c\,\mu^{q(p-2)}
    \bigg[
	\biint_{Q_{\rho}^{(\mu)}} \big[|Du|^{p-1} + |F|^{p-1}\big]\dx\dt
	\bigg]^q
    \bigg[\biint_{Q_\rho^{(\mu)}} \frac{|u|^p}{\rho^p} \dx\dt
    \bigg]^{\frac{q(2-p)}{p}} \\
    &\le
    c\, \bigg[\biint_{Q_{\rho}^{(\mu)}} 
    \big[|Du|^{q} + |F|^{q}\big]\dx\dt\bigg]^{p-1} 
    \bigg[\biint_{Q_{\rho}^{(\mu)}} \big[ |Du|^p +|F|^p\big]\dx\dt
	\bigg]^{\frac{q(2-p)}{p}} ,
\end{align*}
for a constant $c(p,L,K)$.       
Joining the preceding estimates for $\mathrm{I}$ and $\mathrm{II}$ finally proves the claim.
\end{proof}

Our next aim is to derive a Sobolev-Poincar\'e type inequality. It has to be understood in the following way. Lemma~\ref{lem:2<1-1} allows to bound the second term on the right-hand side of the energy inequality in terms of the first one. Therefore, in our Sobolev-Poincar\'e type inequality we will derive an upper bound for this term. In this bound we would like to have the integral of $|Du|^q$ for some $q<p$ on the right-hand side. However, due to the nonhomogeneous behavior of the underlying differential equation some extra terms show up. Fortunately they have exactly the form of the left-hand side of the energy estimate so that they can be re-absorbed later on. Note that the estimate of the term $\mathrm{II}_2$ in the proof of Lemma~\ref{lem:sob-1} is the only point in the paper where the condition $p>\frac{2n}{n+2}$ is needed.

\begin{lemma}\label{lem:sob-1}
Let $\max\{\frac{2n}{n+2},1\}<p\le2$ and $u$ be a weak solution to \eqref{eq-doubly} in $\Omega_T$ in the sense of Definition~{\upshape\ref{def:weak_solution}}. 
Then, on any cylinder $Q_{\rho}^{(\mu)}(z_o)\subseteq\Omega_T$ with $\rho, \mu>0$ satisfying
\eqref{subintrinsic-mu-poin} and \eqref{superintrinsic-mu-poin} and for any $\epsilon\in(0,1]$, we have
\begin{align*}
    &
    \biint_{Q_\rho^{(\mu)}(z_o)}
    \frac{\big|\power{u}{\frac{p}{2}}-
    \power{\big[(u)_{z_o;\rho}^{(\mu)}\big]}{\frac p2}\big|^2}
    {\mu^{p-2}\rho^p}\,\d x\d t\\
	&\quad\le
	\epsilon\Bigg[\sup_{t\in \Lambda_\rho^{(\mu)}(t_o)}
	\bint_{B_\rho(x_o)}
	\frac{\big|\power{u}{\frac{p}{2}}(t)-
	\power{\big[(u)_{z_o;\rho}^{(\mu)}\big]}{\frac{p}{2}}\big|^2}
	{\mu^{p-2}\rho^p} \dx +
	\biint_{Q_\rho^{(\mu)}(z_o)} |Du|^{p} \dx\dt\Bigg] \\
	&\quad\quad+
	\frac{c}{\epsilon^{\frac{2-q(p-1)}{q(p-1)}}} 
	\Bigg[
	\bigg[\biint_{Q_\rho^{(\mu)}(z_o)} |Du|^{q} \dx \dt 
	\bigg]^{\frac{p}{q}} +
	\biint_{Q_\rho^{(\mu)}(z_o)} |F|^p \dx\dt \Bigg]
\end{align*}
with $q=\max\{\frac{2n}{n+2},1\}$ and $c=c(n,p,L, K)$. 
\end{lemma}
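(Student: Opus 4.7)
The plan is to adapt the proof of Lemma~\ref{lem:poin-1}, upgrading Poincar\'e's inequality to a Gagliardo--Nirenberg interpolation so that the supremum term on the right-hand side of the statement gets introduced naturally and can then be absorbed by Young's inequality.

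\emph{Setup and reduction.} I would set $a:=(u)_{z_o;\rho}^{(\mu)}$ and let $\hat r\in[\rho/2,\rho)$ be the radius from Lemma~\ref{lem:time-diff}. Following the scheme of Lemma~\ref{lem:poin-1}, define the auxiliary function $\tilde u(t):=\power{\langle\power{u}{p{-}1}\rangle_{x_o;\hat r}(t)}{\frac{1}{p-1}}$. Since $p\le 2$, Lemma~\ref{lem:Acerbi-Fusco} with $\alpha=\frac{p}{2}$ gives $|\power{u}{\frac{p}{2}}-\power{a}{\frac{p}{2}}|^2\le c(|u|+|a|)^{p-2}|u-a|^2\le c|u-a|^p$, so it suffices to estimate $\biint_{Q_\rho^{(\mu)}(z_o)}|u-a|^p/(\mu^{p-2}\rho^p)\,\dx\dt$. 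The triangle inequality would then split this as $c(\mathrm I+\mathrm{II})$ with $\mathrm I:=\biint|u-\tilde u(t)|^p/(\mu^{p-2}\rho^p)\,\dx\dt$ and the purely temporal $\mathrm{II}:=\bint_{\Lambda_\rho^{(\mu)}(t_o)}|\tilde u(t)-a|^p/(\mu^{p-2}\rho^p)\,\dt$.

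\emph{For $\mathrm I$}, I would apply Lemma~\ref{lem:alphalemma} with $\alpha=\frac{1}{p-1}$ to replace $\tilde u(t)$ by the slice-wise mean $\langle u\rangle_{x_o;\rho}(t)$, and then invoke Gagliardo--Nirenberg (Lemma~\ref{lem:gag}) slicewise with exponents $(p,q,p)$, $\theta=q/p$, and $q=\max\{\frac{2n}{n+2},1\}$. The admissibility condition of Lemma~\ref{lem:gag} would then reduce exactly to $p\le 2$, while the relation $q\le p$ is precisely where the lower bound $p>\frac{2n}{n+2}$ is needed. Combining with Poincar\'e's inequality for the $L^q$-term and integrating in time (no H\"older needed, since $\theta p/q=1$), I expect to obtain an estimate of the form
\[
   \mathrm I\le c(\mathcal S)^{\alpha_1}\biint_{Q_\rho^{(\mu)}(z_o)}|Du|^q\,\dx\dt,\qquad \alpha_1\in(0,1),
\]
where $\mathcal S$ is comparable (via Lemmas~\ref{lem:alphalemma} and~\ref{lem:basic-est-b}) to the supremum appearing in the statement. \emph{For $\mathrm{II}$}, since $\tilde u(t)-a$ is a pointwise-in-$t$ difference of $(p-1)$-th roots of slice-means of $\power{u}{p{-}1}$, I would apply Lemma~\ref{lem:Acerbi-Fusco} with $\alpha=\frac{1}{p-1}$ to produce a time increment of $\langle\power{u}{p{-}1}\rangle_{\hat r}$, and bound this by the Gluing Lemma~\ref{lem:time-diff}. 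The resulting factors of $(|\tilde u(t)|+|a|)^{2-p}$ and of $\mu$ are to be balanced using the $\mu$-sub- and $\mu$-super-intrinsic couplings \eqref{subintrinsic-mu-poin}--\eqref{superintrinsic-mu-poin} (including the alternative $\mu^p\le K$), and H\"older's inequality passes from $L^{p-1}$-integrals of $|Du|,|F|$ to the target quantities $(\biint|Du|^q)^{p/q}$ and $\biint|F|^p$. Finally, Young's inequality applied to the estimate of $\mathrm I$ splits off the supremum with coefficient $\epsilon$; the $\mu$-factors absorbed via the couplings produce the term $\biint|Du|^p$ on the $\epsilon$-side.

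\emph{Main obstacle.} The hard part will be the estimate of $\mathrm{II}$: the Gluing Lemma controls averages of $\power{u}{p{-}1}$, not of $\tilde u(t)$ directly, and because $\frac{1}{p-1}>1$ in the subquadratic regime the inversion $\power{\cdot}{\frac{1}{p-1}}$ is non-Lipschitz. Lemma~\ref{lem:Acerbi-Fusco} then produces a factor $(|\langle\power{u}{p{-}1}\rangle_{\hat r}(t)|+|\power{a}{p{-}1}|)^{\frac{2-p}{p-1}}$ whose quantitative control requires the full strength of the $\mu$-super-intrinsic condition~\eqref{superintrinsic-mu-poin}; this mechanism is responsible for the particular $\epsilon$-power $\frac{2-q(p-1)}{q(p-1)}$ appearing in the final constant.
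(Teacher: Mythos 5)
There is a genuine gap in the plan, and it occurs right at the first step, before any of the harder analysis of $\mathrm{II}$ begins. You propose to first reduce the left-hand side via $|\power{u}{\frac p2}-\power{a}{\frac p2}|^2\le c|u-a|^p$ and then estimate $\biint |u-a|^p/(\mu^{p-2}\rho^p)$. But the whole point of Lemma~\ref{lem:sob-1} is to put the quantity $\sup_{t}\bint_{B_\rho}|\power{u}{\frac p2}(t)-\power{a}{\frac p2}|^2/(\mu^{p-2}\rho^p)\,\dx$ on the right-hand side with a small coefficient~$\epsilon$. In the subquadratic range, the pointwise comparison goes \emph{only one way}: since $(|u|+|a|)^{p-2}\le |u-a|^{p-2}$ for $p\le 2$, Lemma~\ref{lem:Acerbi-Fusco} gives $|\power{u}{\frac p2}-\power{a}{\frac p2}|^2\le c|u-a|^p$, but there is no reverse inequality $|u-a|^p\lesssim|\power{u}{\frac p2}-\power{a}{\frac p2}|^2$. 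Consequently, whatever supremum $\mathcal S$ you extract from a Gagliardo--Nirenberg step on the $|u-a|$ level (be it $\sup_t\bint|u(t)-a|^p$ or $\sup_t\bint|u(t)-a|^2$) \emph{dominates} the target supremum rather than being dominated by it, so it cannot be absorbed with Young's inequality in the direction the statement requires; neither Lemma~\ref{lem:alphalemma} nor Lemma~\ref{lem:basic-est-b} supplies such a reverse comparison. This is precisely the point where the subquadratic and superquadratic cases part ways: in Lemma~\ref{lem:sob-2} one needs $|u-a|^2\lesssim |\power{u}{\frac p2}-\power{a}{\frac p2}|^2$, which is the content of Lemma~\ref{lem:a-b} with $\alpha=\frac p2\ge 1$ and hence \emph{requires} $p\ge 2$.

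The paper avoids the obstruction by never leaving the $|\power{u}{\frac p2}-\power{a}{\frac p2}|^2$ form when producing the supremum. Writing $g(t):=\bint_{B_\rho}|\power{u}{\frac p2}(t)-\power{a}{\frac p2}|^2/\rho^p\,\dx$, the key step is the elementary $L^\infty$--$L^{q/2}$ interpolation in time,
\[
\bint_{\Lambda_\rho^{(\mu)}} g\,\dt
\le\Big(\sup_t g\Big)^{\frac{2-q}{2}}\,\bint_{\Lambda_\rho^{(\mu)}} g^{q/2}\,\dt ,
\]
which delivers the supremum term with the subunit exponent $\frac{2-q}{2}$ before any passage to $|u-a|$; the switch to $|u-a|$ (via Lemmas~\ref{lem:a-b} and~\ref{lem:Acerbi-Fusco}) and the spatial Sobolev inequality are applied only inside the lower-order factor $\mathrm{II}$, and the dangerous $\mu^{2-p}$ is handled with the bound \eqref{mu-est} obtained from \eqref{superintrinsic-mu-poin} and Lemma~\ref{lem:poin-1}. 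Note also that Gagliardo--Nirenberg (Lemma~\ref{lem:gag}) is used in the paper only for $p\ge 2$ (Lemma~\ref{lem:sob-2}); for the subquadratic case the paper does not need it, and your proposed choice of parameters $(p,q,p)$, $\theta=q/p$ in fact collapses Lemma~\ref{lem:gag} to a spatial Sobolev inequality raised to a power, which does not by itself introduce any temporal supremum. Your analysis of $\mathrm{II}$ is reasonable in outline and essentially matches Lemma~\ref{lem:poin-1}, but the proof cannot be completed along the route you describe because of the directional failure at the first reduction.
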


\begin{proof}
As before, we omit the reference point $z_o$ in our notation. Moreover, we abbreviate $a:=(u)_{\rho}^{(\mu)}$ and define
\begin{align*}
	\mathrm{I}
	&
	:=
	 \sup_{t\in \Lambda_\rho^{(\mu)}}
          \bint_{B_\rho}
          \frac{|\power{u}{\frac{p}{2}}(t)-\power{a}{\frac p2}|^2}
         {\mu^{p-2}\rho^p}\,\d x,
\end{align*}
and
\begin{align*}
	\mathrm{II}
	&
	:=
	\Bigg[\bint_{\Lambda_\rho^{(\mu)}}
    \bigg[ \bint_{B_\rho}
    \frac{|\power{u}{\frac{p}{2}}-\power{a}{\frac p2}|^2}{\rho^p}\,\dx
    \bigg]^{\frac{q}{2}} \dt \Bigg]^{\frac{2}{q}}.
\end{align*}
Then, we obtain
\begin{align*}
    	\biint_{Q_\rho^{(\mu)}}
    	\frac{|\power{u}{\frac{p}{2}}-\power{a}{\frac p2}|^2}
    	{\mu^{p-2}\rho^p}\,\d x\d t
    	\le
    	\mathrm{I}^\frac{2-q}{2}\cdot 
	\big[\mu^{2-p}\,\mathrm{II}\big]^\frac{q}{2}.
\end{align*}
In the following, it remains to consider the second term on the right-hand side. 
For the estimate of $\mu$ we use hypothesis \eqref{superintrinsic-mu-poin}. If \eqref{superintrinsic-mu-poin}$_1$ is satisfied, we first apply Lemma~\ref{lem:poin-1} with $q=p$ to obtain
\begin{align*}
	\bigg[\biint_{Q_{\rho}^{(\mu)}} \frac{\abs{u}^{p}}{\rho^p}\dx\dt
	\bigg]^\frac1{p} 
	&\le
	\bigg[ 
	\biint_{Q_{\rho}^{(\mu)}} 
	\frac{|u - a|^{p}}{\rho^p}\dx\dt\bigg]^\frac1{p}
	+
	\frac{|a|}{\rho} \\
	&\le
	c\,\bigg[ 
    \biint_{Q_{\rho}^{(\mu)}} \big[|Du|^p + |F|^p\big] \dx\dt
    \bigg]^\frac1{p}
	+
	\frac{|a|}{\rho} .
\end{align*}
Together with the $\mu$-super-intrinsic coupling \eqref{superintrinsic-mu-poin}$_1$ this yields
\begin{align}\label{mu-est}
	\mu 
	\le
	c
	+
	\frac{c\,|a|}{
	\rho
	\displaystyle{\bigg[\biint_{Q_{\rho}^{(\mu)}} \big[|Du|^p + |F|^p\big] \dx\dt
    \bigg]^{\frac1{p}}}}  \,,
\end{align}
with a constant $c$ depending on $n,p,L$ and $K$. On the other
hand, if \eqref{superintrinsic-mu-poin}$_2$ is satisfied, then \eqref{mu-est} holds true with $c=K^{1/p}$. Consequently, we have inequality \eqref{mu-est} in any case and therefore obtain 
\begin{align*}
    \mu^{2-p}\,\mathrm{II} 
    \le
    c\, \big[\mathrm{II}_1 + \mathrm{II}_2\big],
\end{align*}
where 
\begin{align*}
    \mathrm{II}_1
    &:=
	\Bigg[\bint_{\Lambda_\rho^{(\mu)}}
    \bigg[ \bint_{B_\rho}
    \frac{|\power{u}{\frac{p}{2}}-\power{a}{\frac p2}|^2}
    {\rho^p}\,\dx\bigg]^{\frac{q}{2}} \dt\Bigg]^{\frac{2}{q}} 
\end{align*}
and
\begin{align*}
    \mathrm{II}_2
    &:=
	\frac{
    \Bigg[\displaystyle{\bint_{\Lambda_\rho^{(\mu)}}
    \bigg[ 
    \bint_{B_\rho}
    \frac{|a|^{2-p}|\power{u}{\frac{p}{2}}-\power{a}{\frac p2}|^2}
    {\rho^2}\,\dx\bigg]^{\frac{q}{2}} \dt}\Bigg]^{\frac{2}{q}}}{
    \bigg[\displaystyle{\biint_{Q_{\rho}^{(\mu)}} \big[|Du|^p + |F|^p\big] \dx\dt}
    \bigg]^{\frac{2-p}{p}}} .
\end{align*}
For the estimate of $\mathrm{II}_1$ we apply Lemma~\ref{lem:a-b} with
$\alpha =\frac2p$, Sobolev's inequality and Lemma~\ref{lem:poin-1}. In this way we find 
\begin{align*}
	\mathrm{II}_1
    &\le 
    c\, \Bigg[\bint_{\Lambda_\rho^{(\mu)}} 
    \bigg[ \bint_{B_\rho}
    \frac{|u-a|^p}{\rho^p}\,\dx
    \bigg]^{\frac{q}{2}} \dt 
    \Bigg]^{\frac{2}{q}}\nonumber\\
    &\le 
    c\, \Bigg[\bint_{\Lambda_\rho^{(\mu)}}
    \bigg[ \bint_{B_\rho}
    \bigg[|Du|^q +
    \frac{|u-a|^q}{\rho^q}
    \bigg]\,\dx\bigg]^{\frac{p}{2}} \dt 
    \Bigg]^{\frac{2}{q}}\nonumber\\
    &\le 
    c\,\Bigg[\biint_{Q_\rho^{(\mu)}}
    \bigg[ 
     |Du|^q + \frac{|u-a|^q}{\rho^q}
    \bigg]\,\dx \dt\Bigg]^{\frac{p}{q}} \nonumber\\
    &\le
	c\,
	\bigg[\biint_{Q_\rho^{(\mu)}} 
	\big[|Du|^q + |F|^q\big] \,\dx\dt
	\bigg]^{\frac{p(p-1)}{q}} 
	\bigg[\biint_{Q_\rho^{(\mu)}} 
	\big[|Du|^p + |F|^p\big] \,\dx\dt
	\bigg]^{2-p} ,
\end{align*}
where $c= c(n,p,L,K)$.
Now we turn our attention to the second term. With the help of Lemma~\ref{lem:Acerbi-Fusco} applied with $\alpha=\frac2p$ and Sobolev's inequality, we find that
\begin{align*}
    \mathrm{II}_2
    &\le
    c\,\frac{
    \Bigg[\displaystyle{
    \bint_{\Lambda_\rho^{(\mu)}}
    \bigg[ \bint_{B_\rho}
    \frac{|u-a|^2}
    {\rho^2}\,\dx\bigg]^{\frac{q}{2}} \dt\Bigg]^{\frac{2}{q}}}}{
    \displaystyle{
    \bigg[\biint_{Q_{\rho}^{(\mu)}} \big[|Du|^p + |F|^p\big] \dx\dt
    \bigg]^{\frac{2-p}{p}}}} 
    \le
    c\,\frac{
    \Bigg[\displaystyle{
    \biint_{Q_\rho^{(\mu)}}
    \bigg[ 
    |Du|^{q}+\frac{|u-a|^{q}}{\rho^{q}}
    \bigg] \dx \dt}\Bigg]^{\frac{2}{q}}}{
    \displaystyle{\bigg[\biint_{Q_{\rho}^{(\mu)}} \big[|Du|^p + |F|^p\big] \dx\dt}
    \bigg]^{\frac{2-p}{p}}} 
\end{align*}
for a constant $c=c(n,p)$.
The term involving $|u-a|^q$ is now treated as above with Lemma~\ref{lem:poin-1}, so that 
\begin{align*}
    \mathrm{II}_2
    &\le
    c\,
	\bigg[\biint_{Q_\rho^{(\mu)}} 
	\big[|Du|^q + |F|^q\big] \,\dx\dt\bigg]^{\frac{2(p-1)}{q}} 
	\bigg[\biint_{Q_{\rho}^{(\mu)}} \big[|Du|^p + |F|^p\big] \dx\dt
    \bigg]^{\frac{2-p}{p}}
\end{align*}
holds true with a constant $c$ depending only on $n,p,L$, and $K$.
Inserting the preceding estimates above and applying Young's
inequality, we derive the desired inequality.
\end{proof}

\subsection{The case $\pmb{p\ge 2}$}

Now, we turn our attention to the superquadratic case $p\ge 2$. We emphasize that all results of this section hold true for the full range $p\ge 2$. The restriction $p<\frac{2n}{(n-2)_+}$ will be necessary later on in the covering argument.  
Contrary to the subquadratic case in Lemma~\ref{lem:2<1-1}, we find in the superquadratic case a straight-forward bound on $\mu$-sub-intrinsic  cylinders for the first term on the right-hand side of the energy inequality in Lemma~\ref{lem:energy} in terms of the second one. 

\begin{lemma}\label{lem:1<2-2}
Let $p\ge2$ and $u$ be a weak solution to \eqref{eq-doubly} in $\Omega_T$ in the sense of Definition~{\upshape\ref{def:weak_solution}}. 
Then, on any cylinder $Q_{\rho}^{(\mu)}(z_o)\subseteq\Omega_T$ with $\rho,\mu>0$ satisfying
the $\mu$-sub-intrinsic coupling \eqref{subintrinsic-mu-poin}, 
we kave
\begin{align*}
    &\biint_{Q_\rho^{(\mu)}(z_o)}
    \frac{\big|\power{u}{\frac{p}{2}}-
    \power{\big[(u)_{z_o;\rho}^{(\mu)}\big]}{\frac p2}\big|^2}
    {\rho^p}\,\d x\d t\\
	&\qquad\le
	c\,
	\bigg[
	\biint_{Q_\rho^{(\mu)}(z_o)} 
	\frac{\big|u-(u)_{z_o;\rho}^{(\mu)}\big|^{p}}{\mu^{2-p}\rho^p}\, \dx\dt
	\bigg]^\frac2p 
	\bigg[\biint_{Q_{\rho}^{(\mu)}(z_o)} \big[ |Du|^p +|F|^p\big]\dx\dt
	\bigg]^\frac{p-2}{p}
\end{align*} 
with $c= c(p,K)$. 
\end{lemma}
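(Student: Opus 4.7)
My plan is to reduce the left-hand side to an expression of the form $\iint (|u|+|a|)^{p-2}|u-a|^2/\rho^p$, split it by Hölder, and then control the resulting factor $\iint (|u|+|a|)^p/\rho^p$ via the $\mu$-sub-intrinsic hypothesis \eqref{subintrinsic-mu-poin}. The $\mu$-scaling should balance out automatically; checking this cancellation is the main algebraic point.

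First, abbreviate $a := (u)_{z_o;\rho}^{(\mu)}$. Since $p\ge2$, apply Lemma~\ref{lem:Acerbi-Fusco} with $\alpha=p/2\ge1$ to obtain, pointwise,
\[
	\big|\power{u}{\frac{p}{2}}-\power{a}{\frac{p}{2}}\big|^2
	\le
	c\,\big(|u|+|a|\big)^{p-2}|u-a|^2.
\]
For $p=2$ the claimed inequality is already a tautology, so assume $p>2$. Divide by $\rho^p$, write $\rho^{-p}=\rho^{-(p-2)}\rho^{-2}$, and apply Hölder's inequality with exponents $\tfrac{p}{p-2}$ and $\tfrac{p}{2}$ to get
\[
	\biint_{Q_\rho^{(\mu)}(z_o)}
	\frac{\big|\power{u}{\frac{p}{2}}-\power{a}{\frac{p}{2}}\big|^2}{\rho^p}\,\dx\dt
	\le
	c\,
	\bigg[\biint_{Q_\rho^{(\mu)}(z_o)}
	\frac{(|u|+|a|)^{p}}{\rho^p}\,\dx\dt\bigg]^{\frac{p-2}{p}}
	\bigg[\biint_{Q_\rho^{(\mu)}(z_o)}\frac{|u-a|^p}{\rho^p}\,\dx\dt\bigg]^{\frac{2}{p}}.
\]

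Next, control the first bracket. Jensen's inequality gives $|a|^p\le \biint |u|^p$, so up to a constant it suffices to bound $\iint |u|^p/\rho^p$. Since $p\ge2$ means $\pf=p$, the sub-intrinsic coupling \eqref{subintrinsic-mu-poin} reads
\[
  \biint_{Q_\rho^{(\mu)}(z_o)}\frac{|u|^p}{\rho^p}\,\dx\dt
  \le
  K\mu^{p+2-\pf}\biint_{Q_\rho^{(\mu)}(z_o)}\big[|Du|^p+|F|^p\big]\,\dx\dt
  =
  K\mu^{2}\biint_{Q_\rho^{(\mu)}(z_o)}\big[|Du|^p+|F|^p\big]\,\dx\dt.
\]
Raised to the power $(p-2)/p$ this produces a factor $\mu^{2(p-2)/p}$ multiplying $[\iint (|Du|^p+|F|^p)]^{(p-2)/p}$.

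Finally, match the denominator in the second bracket by writing $\rho^{-p}=\mu^{2-p}(\mu^{2-p}\rho^p)^{-1}$. Pulling $\mu^{2-p}$ outside of the $2/p$-th power produces the factor $\mu^{(2-p)\cdot 2/p}=\mu^{-2(p-2)/p}$, which cancels exactly the factor $\mu^{2(p-2)/p}$ from the first bracket. The remaining inequality is precisely the claim, with a constant $c=c(p,K)$. The only place where the superquadratic assumption enters crucially is in Lemma~\ref{lem:Acerbi-Fusco} (exponent $\alpha=p/2\ge 1$) and in the identity $\pf=p$ that makes the $\mu$-powers cancel; both are automatic under $p\ge2$.
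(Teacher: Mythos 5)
Your proof is correct and matches the paper's argument in all essentials: Lemma~\ref{lem:Acerbi-Fusco} with $\alpha=p/2$, H\"older with exponents $p/(p-2)$ and $p/2$, then the $\mu$-sub-intrinsic coupling \eqref{subintrinsic-mu-poin} with $\pf=p$. The only cosmetic difference is that the paper factors out $\mu^{-(p-2)}$ from $\rho^{-p}$ before applying H\"older so that the $\mu$-weight appears in both brackets from the start, whereas you apply H\"older to $\rho^{-p}$ directly and reinstate the $\mu$-weights afterwards -- the same cancellation either way.
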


\begin{proof}
As before, we omit the reference point $z_o$ in our notation. 
Applying Lemma~\ref{lem:Acerbi-Fusco} with $\alpha=\frac p2$,
H\"older's inequality, and finally hypothesis \eqref{subintrinsic-mu-poin}, we obtain
\begin{align*}
	&
    \biint_{Q_\rho^{(\mu)}}
    \frac{\big|\power{u}{\frac{p}{2}}-
	\power{\big[(u)_{\rho}^{(\mu)}\big]}{\frac p2}\big|^2}
    {\rho^p}\,\dx\dt \\
    &\qquad\le
    \frac{c}{\mu^{p-2}}\,\biint_{Q_\rho^{(\mu)}}
    \frac{\big|u-(u)_{\rho}^{(\mu)}\big|^2
    \big[|u|+|(u)_{\rho}^{(\mu)}|\big]^{p-2}}
    {\mu^{2-p}\rho^p}\,\dx\dt \\
    &\qquad\le
    \frac{c}{\mu^{p-2}}\,
    \bigg[\biint_{Q_\rho^{(\mu)}}
    \frac{\big|u-(u)_{\rho}^{(\mu)}\big|^p}{\mu^{2-p}\rho^p}\,\dx\dt
    \bigg]^{\frac{2}{p}} 
    \bigg[\biint_{Q_\rho^{(\mu)}}
    \frac{\big[|u|+|(u)_{\rho}^{(\mu)}|\big]^{p}}{\mu^{2-p}\rho^p}\,\dx\dt
    \bigg]^{\frac{p-2}{p}} \\
    &\qquad\le
    c\,
    \bigg[\biint_{Q_\rho^{(\mu)}}
    \frac{\big|u-(u)_{\rho}^{(\mu)}\big|^p}{\mu^{2-p}\rho^p}\,\dx\dt
    \bigg]^{\frac{2}{p}}
    \bigg[\biint_{Q_{\rho}^{(\mu)}} \big[ |Du|^p +|F|^p\big]\dx\dt
	\bigg]^\frac{p-2}{p} ,
\end{align*}
which proves the claim.
\end{proof}

The next lemma is the analogue of Lemma~\ref{lem:poin-1} for the superquadratic case and should be interpreted as a parabolic Poincar\'e type inequality. 

\begin{lemma}\label{lem:poin-2}
Let $p\ge2$ and $u$ be a weak solution to \eqref{eq-doubly} in $\Omega_T$ in the sense of Definition~{\upshape\ref{def:weak_solution}}. 
Then, on any cylinder $Q_{\rho}^{(\mu)}(z_o)\subseteq\Omega_T$ with $\rho,\mu>0$ satisfying
\eqref{subintrinsic-mu-poin} and \eqref{superintrinsic-mu-poin}, where
\begin{align*}
	\biint_{Q_\rho^{(\mu)}(z_o)} 
	\frac{\big|u-(u)_{z_o;\rho}^{(\mu)}\big|^{q}}{\mu^{\frac{q(2-p)}{p}}\rho^q}\, \dx\dt 
	&\le
	c\,\biint_{Q_{\rho}^{(\mu)}(z_o)} 
	\big[|Du|^{q} + |F|^{q}\big]\dx\dt ,
\end{align*}
for any $q\in[p-1,p]$ and $c=c(n,p,L,K)$. 
\end{lemma}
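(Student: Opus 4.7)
The strategy mirrors that of Lemma~\ref{lem:poin-1}, but with the key signs reversed since now $\alpha-1=\frac{2-p}{p-1}\le 0$. First, I would replace the full mean $(u)_{z_o;\rho}^{(\mu)}$ by the rectified quantity $\power{[(u^{p-1})_{\hat\rho}^{(\mu)}]}{1/(p-1)}$, where $\hat\rho\in[\rho/2,\rho)$ is the radius supplied by the Gluing Lemma~\ref{lem:time-diff}. The triangle inequality together with Jensen then gives
\begin{align*}
\biint_{Q_\rho^{(\mu)}(z_o)}\frac{|u-(u)_{z_o;\rho}^{(\mu)}|^q}{\mu^{q(2-p)/p}\rho^q}\,\dx\dt\le c\,(\mathrm{I}+\mathrm{II}),
\end{align*}
where $\mathrm{I}$ captures the spatial fluctuation of $u(\cdot,t)$ around the slicewise rectified mean on the ball $B_\rho^{(\mu)}(x_o)$, and $\mathrm{II}$ captures the temporal oscillation of that slicewise rectified mean around $\power{(u^{p-1})_{\hat\rho}^{(\mu)}}{1/(p-1)}$.

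For $\mathrm{I}$, I would apply Lemma~\ref{lem:alphalemma} slicewise with $\alpha=\frac{1}{p-1}$; the assumption $q\ge p-1$ ensures the admissibility condition $\alpha\ge\frac{1}{q}$. This lets me swap $\power{\langle u^{p-1}\rangle_{\hat\rho}^{(\mu)}(t)}{1/(p-1)}$ for the ordinary slicewise mean $\langle u\rangle_{x_o;\rho}^{(\mu)}(t)$. Poincar\'e's inequality on $B_\rho^{(\mu)}(x_o)$, whose effective radius is $\mu^{(2-p)/p}\rho$, then supplies exactly the factor $(\mu^{(2-p)/p}\rho)^q=\mu^{q(2-p)/p}\rho^q$ required to cancel the denominator, yielding $\mathrm{I}\le c\biint_{Q_\rho^{(\mu)}(z_o)}|Du|^q\dx\dt$.

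For $\mathrm{II}$, the sign reversal prevents me from exploiting $(|A|+|B|)^{\alpha-1}$ via Lemma~\ref{lem:Acerbi-Fusco} as in the subquadratic case, so I would turn to Lemma~\ref{lem:a-b} with exponent $p-1\ge 1$ to obtain the subadditive bound $|\power{A(t)}{1/(p-1)}-\power{B}{1/(p-1)}|^q\le c\,|A(t)-B|^{q/(p-1)}$, with $A(t)=\langle u^{p-1}\rangle_{\hat\rho}^{(\mu)}(t)$ and $B=(u^{p-1})_{\hat\rho}^{(\mu)}$. The Gluing Lemma~\ref{lem:time-diff}, applied with spatial radius $R=\mu^{(2-p)/p}\rho$ and temporal half-length $S=\rho^p$ so that $S/R=\mu^{(p-2)/p}\rho^{p-1}$, controls $|A(t)-B|$ by $c\mu^{(p-2)/p}\rho^{p-1}\biint[|Du|^{p-1}+|F|^{p-1}]\dx\dt$. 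Raising to the power $q/(p-1)$, dividing by the denominator $\mu^{q(2-p)/p}\rho^q$, and applying H\"older's inequality (valid since $q\ge p-1$) to convert the $(p-1)$-power integral into a $q$-power one, the arithmetic collapses to
\begin{align*}
\mathrm{II}\le c\,\mu^{q(p-2)/(p-1)}\biint_{Q_\rho^{(\mu)}(z_o)}\big[|Du|^q+|F|^q\big]\dx\dt.
\end{align*}

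The main technical obstacle is absorbing the prefactor $\mu^{q(p-2)/(p-1)}$. If the super-intrinsic alternative \eqref{superintrinsic-mu-poin}$_2$ is active, then $\mu\le K^{1/p}$ and the prefactor is a $K,p$-constant, so the claim follows. In the alternative \eqref{superintrinsic-mu-poin}$_1$, the hypothesis rearranges to $\mu^2\le K\int|u|^p/(\rho^p\int[|Du|^p+|F|^p])$, and combined with the sub-intrinsic bound $\int|u|^p/\rho^p\le K\mu^2\int[|Du|^p+|F|^p]$ this pins $\mu^2$ to that ratio up to a multiplicative constant depending on $K$. The prefactor can then be re-expressed as a power of the ratio, and a careful interplay of H\"older's inequality applied to $\int[|Du|^{p-1}+|F|^{p-1}]$ with the sub-intrinsic inequality is used to trade the resulting $|u|^p$-integral against the $|Du|^p$-integral on which $\mu^2$ was defined. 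The delicate book-keeping balances out because of the specific value $\alpha=\frac{1}{p-1}$ inherited from the time derivative $\partial_t\power{u}{p-1}$ in the system, and this balancing is the crux of the proof.
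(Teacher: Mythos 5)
Your decomposition into $\mathrm{I}+\mathrm{II}$ after replacing the global mean by the rectified quantity $\power{[(u^{p-1})_{\hat\rho}^{(\mu)}]}{1/(p-1)}$, the estimate of $\mathrm{I}$ by Lemma~\ref{lem:alphalemma} plus Poincar\'e, the estimate of $\mathrm{II}$ via Lemma~\ref{lem:a-b}, the Gluing Lemma~\ref{lem:time-diff} with $R=\mu^{(2-p)/p}\rho$, $S=\rho^p$, and H\"older, and the disposal of the easy alternative \eqref{superintrinsic-mu-poin}$_2$ all match the paper's proof. Up to and including the intermediate bound
\begin{equation*}
\mathrm{II}\le c\,\mu^{\frac{q(p-2)}{p-1}}\biint_{Q_\rho^{(\mu)}(z_o)}\big[|Du|^{q}+|F|^{q}\big]\dx\dt
\end{equation*}
you reproduce \eqref{est:II-pre} exactly.

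The treatment of the remaining alternative \eqref{superintrinsic-mu-poin}$_1$, however, is where you leave a genuine gap rather than merely elide routine details. You propose to substitute $\mu^2\approx \biint|u|^p/(\rho^p\biint[|Du|^p+|F|^p])$ into the prefactor and let ``careful book-keeping'' do the rest. Carried out literally, this trades $\mu$ for a power of $\biint|u|^p$, which you then cannot dispose of: the sub-intrinsic bound $\biint|u|^p\le K\mu^2\rho^p\biint[|Du|^p+|F|^p]$ simply reintroduces $\mu$, and no straightforward application of H\"older turns the resulting right-hand side back into $c\,\biint[|Du|^q+|F|^q]$. The paper's argument at this point hinges on three structural moves you do not mention. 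First, the factor $[\biint|u|^p]^{q(p-2)/p}$ is inserted and removed multiplicatively and the triangle inequality $[\biint|u|^p]^{1/p}\le c[\biint|u-(u)_\rho^{(\mu)}|^p]^{1/p}+|(\power{u}{p-1})_{\hat\rho}^{(\mu)}|^{1/(p-1)}$, raised to the power $q(p-2)$, splits $\mathrm{II}$ into $\mathrm{II}_1+\mathrm{II}_2$. Second, $\mathrm{II}_1$ is bounded by a strictly sub-unit power $\frac{q(p-2)}{p(p-1)}<1$ of the very Poincar\'e quantity being estimated, and for $q=p$ Young's inequality with exponents $\frac{p-1}{p-2}$, $p-1$ is used to re-absorb it into the left-hand side. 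Third, the cases $q\in[p-1,p)$ are then handled by a bootstrap: the already established $q=p$ inequality is fed back into the bound for $\mathrm{II}_1$. Without the re-absorption and the bootstrap the argument is circular, so the assertion that the exponents ``balance out'' does not hold as stated; you would need to supply this additional structure to close the proof.
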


\begin{proof}
Throughout  the proof we omit the reference to the center $z_o$ in our notation. Similar to the proof of Lemma~\ref{lem:poin-1}, we find that 
\begin{align*}
     \biint_{Q_\rho^{(\mu)}}&
     \frac{\big|u-(u)_{\rho}^{(\mu)}\big|^q}
     {\mu^{\frac{q(2-p)}{p}}\rho^q}\,\d x\d t
     \le
     \biint_{Q_\rho^{(\mu)}}
     \frac{\big|u-
     \power{\big[(u^{p-1})_{\hat\rho}^{(\mu)}\big]}{\frac{1}{p-1}}
     \big|^q}
     {\mu^{\frac{q(2-p)}{p}}\rho^{q}}\,\d x\d t 
     \le 
     2^{q-1}[\mathrm{I}+\mathrm{II}],
\end{align*}
where we abbreviated
\begin{align*}
	\mathrm{I}
    &:=
    \biint_{Q_\rho^{(\mu)}}
	\frac{\big|u-
	\power{\big[\langle u^{p-1}\rangle_{\hat\rho}^{(\mu)}(t)\big]}{\frac{1}{p-1}}
	\big|^q}
    {\mu^{\frac{q(2-p)}{p}}\rho^{q}}\,\d x\d t,\nonumber\\[5pt]
    \mathrm{II}
    &:=
    \mu^{\frac{q(p-2)}{p}}\bint_{\Lambda_\rho}
    \frac{\big|
    \power{\big[\langle u^{p-1}\rangle_{\hat\rho}^{(\mu)}(t)\big]}{\frac{1}{p-1}}-
    \power{\big[(u^{p-1})_{\hat\rho}^{(\mu)}\big]}{\frac{1}{p-1}}\big|^q}
    {\rho^{q}}\,\d t
\end{align*}
with $\hat\rho\in [\frac\rho2, \rho)$ denoting the radius from the Gluing Lemma \ref{lem:time-diff}. 
Recall the abbreviation $\langle u\rangle_{\rho}^{(\mu)}(t):=\langle u\rangle_{B_\rho^{(\mu)}}(t)$ for the slice-wise mean. 
For the estimate of $\mathrm{I}$ we in turn apply Lemma~\ref{lem:alphalemma} with $\alpha= \frac{1}{p-1}\ge \frac1q$ and $\hat\rho\in [\frac{\rho}{2},\rho]$ and subsequently Poincar\'e's inequality. This leads to
\begin{align*}
	\mathrm{I}
	\le
	c\,\biint_{Q_\rho^{(\mu)}}
	\frac{|u-\langle u\rangle_{\rho}^{(\mu)}(t)|^q}{\mu^{\frac{q(2-p)}{p}}\rho^{q}}\,\dx\dt 
	\le
	c\,\biint_{Q_\rho^{(\mu)}}
	|Du|^q\,\dx\dt  ,
\end{align*}
for a constant $c=c(n,p)$. 
Now we will treat $\mathrm{II}$.
We start with an application of Lemma~\ref{lem:a-b} with $\alpha=p-1\ge1$
and subsequently the Gluing Lemma~\ref{lem:time-diff} with $R=\mu^{\frac{2-p}{p}}\rho$ and $S=\rho^p$. This gives
\begin{align}\label{est:II-pre}
    \mathrm{II}
    &\le
    \frac{\mu^{\frac{q(p-2)}{p}}}{\rho^q}
    \bint_{\Lambda_\rho}
    \big|\langle\power{u}{p{-}1}\rangle_{\hat\rho}^{(\mu)}(t)-
    (\power{u}{p{-}1})_{\hat\rho}^{(\mu)}
    \big|^\frac{q}{p-1}\dt\nonumber \\
    &\le
    \frac{\mu^{\frac{q(p-2)}{p}}}{\rho^q}
    \bint_{\Lambda_\rho}
    \bint_{\Lambda_\rho}
    \big|\langle\power{u}{p{-}1}\rangle_{\hat\rho}^{(\mu)}(t)-
    \langle\power{u}{p{-}1}\rangle_{\hat\rho}^{(\mu)}(\tau)
    \big|^\frac{q}{p-1}\dt\dtau \nonumber\\
	&\le
	c\,\mu^{\frac{q(p-2)}{p-1}} 
	\bigg[\biint_{Q_{\rho}^{(\mu)}} 
	\big[|Du|^{p-1} + |F|^{p-1}\big]\dx\dt\bigg]^\frac{q}{p-1} \nonumber\\
	&\le
	c\,\mu^{\frac{q(p-2)}{p-1}} 
	\biint_{Q_{\rho}^{(\mu)}} 
	\big[|Du|^{q} + |F|^{q}\big]\dx\dt .
\end{align}
If hypothesis \eqref{superintrinsic-mu-poin}$_2$ is satisfied, the lemma is proven. Therefore, it remains to consider the case where \eqref{superintrinsic-mu-poin}$_1$ is in force. Here, we apply 
Lemma~\ref{lem:alphalemma} to deduce that
\begin{align*}
     \bigg[\biint_{Q_\rho^{(\mu)}}
     |u|^p\,\dx\dt \bigg]^\frac1p
     &\le
     \bigg[\biint_{Q_\rho^{(\mu)}}
     \big|u-
     \power{\big[(u^{p-1})_{\hat\rho}^{(\mu)}\big]}{\frac{1}{p-1}}\big|^p\,\dx\dt
     \bigg]^{\frac{1}{p}}
     +
     \big|(\power{u}{p{-}1})_{\hat\rho}^{(\mu)}\big|^{\frac{1}{p-1}} \\
     &\le
     c\, \bigg[\biint_{Q_\rho^{(\mu)}}
     \big|u-(u)_{\rho}^{(\mu)}\big|^p\,\dx\dt \bigg]^{\frac{1}{p}}
     +
     \big|(\power{u}{p{-}1})_{\hat\rho}^{(\mu)}\big|^{\frac{1}{p-1}}  .
\end{align*}
Having arrived at this point, we take the last inequality to the power $q(p-2)$ and obtain
$$
	\mathrm{II}
	=
	\bigg[\biint_{Q_\rho^{(\mu)}} |u|^p\,\dx\dt \bigg]^\frac{q(p-2)}{p}
	\bigg[\biint_{Q_\rho^{(\mu)}} |u|^p\,\dx\dt \bigg]^{-\frac{q(p-2)}{p}}
	\cdot \mathrm{II}
	\le 
	c\big[\mathrm{II}_1+\mathrm{II}_2\big],
$$
with the abbreviations
\begin{align*}
     \mathrm{II}_1
     &:=
     \frac{\bigg[\displaystyle{\biint_{Q_\rho^{(\mu)}}
     \big|u-(u)_{\rho}^{(\mu)}\big|^p\,\dx\dt} \bigg]^{\frac{q(p-2)}{p}} }{
     \bigg[\displaystyle{\biint_{Q_\rho^{(\mu)}} |u|^p\,\dx\dt} \bigg]^{\frac{q(p-2)}{p}} }
     \cdot
     \mathrm{II},
\end{align*}
and 
\begin{align*}
    \mathrm{II}_2
    &:=
    \frac{\big|(\power{u}{p{-}1})_{\hat\rho}^{(\mu)}\big|^{\frac{q(p-2)}{p-1}}}{
	\bigg[\displaystyle{\biint_{Q_\rho^{(\mu)}} |u|^p\,\dx\dt} \bigg]^{\frac{q(p-2)}{p}}}
    \cdot
    \mathrm{II}.
\end{align*}
For the estimate of $\mathrm{II}_2$, we proceed as follows. We first insert the definition of $\mathrm{II}$, then use Lemma \ref{lem:Acerbi-Fusco} with $\alpha:=p-1\ge 1$ and finally apply the Gluing Lemma~\ref{lem:time-diff}. This  leads to
\begin{align*}
    \mathrm{II}_2
    &\le
    \frac{c\,\mu^{\frac{q(p-2)}{p}}}{
    \rho^{q}
    \displaystyle{\bigg[\biint_{Q_\rho^{(\mu)}} |u|^p\,\dx\dt \bigg]^{\frac{q(p-2)}{p}}}}\
    \bint_{\Lambda_\rho}
    \big|\langle\power{u}{p{-}1}\rangle_{\hat\rho}^{(\mu)}(t)-
    (\power{u}{p{-}1})_{\hat\rho}^{(\mu)}\big|^q
    \d t \\
    &\le
    \frac{c\,\mu^{\frac{2q(p-2)}{p}}}{
    \rho^{q(2-p)}
    \displaystyle{\bigg[\biint_{Q_\rho^{(\mu)}} |u|^p\,\dx\dt \bigg]^{\frac{q(p-2)}{p}}}}\
	\bigg[\biint_{Q_{\rho}^{(\mu)}} \big[|Du|^{p-1} + |F|^{p-1}\big]\dx\dt\bigg]^q .
\end{align*}
With H\"older's inequality and hypothesis \eqref{superintrinsic-mu-poin}$_1$, we finally obtain
\begin{align*}
    \mathrm{II}_2
    \le
    c\,\frac{\displaystyle{
    \bigg[\biint_{Q_{\rho}^{(\mu)}} \big[|Du|^{q} + |F|^{q}\big]\dx\dt\bigg]^{p-1}
    } }{
	\displaystyle{
	\bigg[\biint_{Q_{\rho}^{(\mu)}} \big[|Du|^{p} + |F|^{p}\big]\dx\dt\bigg]^{\frac{q(p-2)}{p}}
	} }
    \le
    c\,	\biint_{Q_{\rho}^{(\mu)}} \big[|Du|^{q} + |F|^{q}\big]\dx\dt .
\end{align*}
Inserting the preceding estimates above, we have shown that 
\begin{align}\label{poin-1}
     \biint_{Q_\rho^{(\mu)}}
     \frac{\big|u-(u)_{\rho}^{(\mu)}\big|^q}
     {\mu^{\frac{q(2-p)}{p}}\rho^q}\,\d x\d t
     \le 
     c\,\biint_{Q_\rho^{(\mu)}} \big[|Du|^{q} + |F|^{q}\big]\dx\dt +
     c\, \mathrm{II}_1.
\end{align}
For the estimate of $\mathrm{II}_1$, we use \eqref{est:II-pre} and hypothesis \eqref{superintrinsic-mu-poin}$_1$ to obtain
\begin{align}\label{II_1}
     \mathrm{II}_1
     &\le
     c\,\mu^{\frac{q(p-2)}{p-1}}
     \frac{\displaystyle{\bigg[\biint_{Q_\rho^{(\mu)}}
     \big|u-(u)_{\rho}^{(\mu)}\big|^p\,\dx\dt \bigg]^{\frac{q(p-2)}{p}}}}{ 
     \displaystyle{\bigg[\biint_{Q_\rho^{(\mu)}} |u|^p\,\dx\dt \bigg]^{\frac{q(p-2)}{p}}}}
     \biint_{Q_{\rho}^{(\mu)}} \big[|Du|^{q} + |F|^{q}\big]\dx\dt \nonumber\\
     &\le
     c\,\frac{\displaystyle{\bigg[\biint_{Q_\rho^{(\mu)}}
     \frac{\big|u-(u)_{\rho}^{(\mu)}\big|^p}{\mu^{2-p}\rho^p}\,\dx\dt \bigg]^{\frac{q(p-2)}{p}}}}{ 
     \displaystyle{\bigg[\biint_{Q_\rho^{(\mu)}} \frac{|u|^p}{\mu^{2-p}\rho^p}\,\dx\dt \bigg]^{\frac{q(p-2)^2}{p(p-1)}}}}
     \frac{\displaystyle{\biint_{Q_{\rho}^{(\mu)}} \big[|Du|^{q} + |F|^{q}\big]\dx\dt}}
     {\bigg[\displaystyle{\biint_{Q_{\rho}^{(\mu)}} \big[|Du|^{p} + |F|^{p}\big]\dx\dt}\bigg]^{\frac{q(p-2)}{p(p-1)}}} \nonumber\\
     &\le
     c\,\bigg[\biint_{Q_\rho^{(\mu)}}
     \frac{\big|u-(u)_{\rho}^{(\mu)}\big|^p}{\mu^{2-p}\rho^p}\,\dx\dt \bigg]^{\frac{q(p-2)}{p(p-1)}}
     \frac{\displaystyle{\biint_{Q_{\rho}^{(\mu)}} \big[|Du|^{q} + |F|^{q}\big]\dx\dt}}
     {\bigg[\displaystyle{\biint_{Q_{\rho}^{(\mu)}} \big[|Du|^{p} + |F|^{p}\big]\dx\dt}\bigg]^{\frac{q(p-2)}{p(p-1)}}}.
\end{align}
In the case $q=p$, we use Young's inequality with exponents $\frac{p-1}{p-2}$ and $p-1$ and obtain
\begin{align*}
     c\,\mathrm{II}_1
     \le
     \tfrac12
     \biint_{Q_\rho^{(\mu)}}
     \frac{\big|u-(u)_{\rho}^{(\mu)}\big|^p}
     {\mu^{2-p}\rho^{p}}\,\dx\dt +
     c\,\biint_{Q_{\rho}^{(\mu)}} \big[|Du|^{p} + |F|^{p}\big]\dx\dt.
\end{align*}
Inserting this into \eqref{poin-1} and reabsorbing the first term of the right-hand side into the left yields the desired Poincar\'e type inequality in the case $q=p$. At this point it remains to consider the case $q\in[p-1,p)$. Here, we use in \eqref{II_1} the Poincar\'e type inequality for $q=p$ to conclude
\begin{align*}
    \mathrm{II}_1
    &\le
    c\,
    \biint_{Q_{\rho}^{(\mu)}} \big[|Du|^{q} + |F|^{q}\big]\dx\dt.
\end{align*}
Together with \eqref{poin-1} this finishes the proof in the remaining case $q\in[p-1,p)$.
\end{proof}

As final result of this section we derive a Sobolev-Poincar\'e type inequality, which should be seen as the analogue of Lemma~\ref{lem:sob-1} for the superquadratic case.

\begin{lemma}\label{lem:sob-2}
Let $p\ge2$ and $u$ be a weak solution to \eqref{eq-doubly} in $\Omega_T$ in the sense of Definition~{\upshape\ref{def:weak_solution}}. 
Then, on any cylinder $Q_{\rho}^{(\mu)}(z_o)\subseteq\Omega_T$ with $\rho,\mu>0$ satisfying
\eqref{subintrinsic-mu-poin} and \eqref{superintrinsic-mu-poin} and for any $\epsilon\in (0,1]$, we have
\begin{align*}
    \biint_{Q_\rho^{(\mu)}(z_o)}
    \frac{\big|u-(u)_{z_o;\rho}^{(\mu)}\big|^p}
    {\mu^{2-p}\rho^p}\,\d x\d t
    &\le
	\epsilon\,\sup_{t\in \Lambda_\rho(t_o)}
	\bint_{B_\rho^{(\mu)}(x_o)}
	\frac{\big|\power{u}{\frac{p}{2}}(t)-
	\power{\big[(u)_{z_o;\rho}^{(\mu)}\big]}{\frac{p}{2}}\big|^2}{\rho^p}
	\dx \\
	&\quad+
	\frac{c}{\epsilon^{\frac{p-q}{2+q-p}}}
	\bigg[\biint_{Q_\rho^{(\mu)}(z_o)}
    \big[|Du|^{q} + |F|^{q}\big]
    \dx \dt\bigg]^{\frac{p}{q}} 
\end{align*}
with $q=\max\{\frac{np}{n+2},p{-}1\}$ and $c= c(n,p,L,K)$. 
\end{lemma}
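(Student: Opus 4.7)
The plan is to derive this Sobolev--Poincar\'e type estimate by combining a pointwise-in-time spatial Gagliardo--Nirenberg interpolation with the parabolic Poincar\'e inequality of Lemma~\ref{lem:poin-2} and a final Young's inequality, mirroring the architecture of the subquadratic Lemma~\ref{lem:sob-1}. I would first introduce the shorthand $a := (u)_{z_o;\rho}^{(\mu)}$, $v := u - a$, $w := \power{u}{\frac{p}{2}} - \power{a}{\frac{p}{2}}$, and $R_\mu := \mu^{(2-p)/p}\rho$, so that $B_\rho^{(\mu)} = B_{R_\mu}(x_o)$ and the left-hand side of the claimed inequality equals $L := \biint |v|^p/R_\mu^p\,\dx\dt$. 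Lemma~\ref{lem:a-b} applied with exponent $\tfrac{p}{2} \ge 1$ yields $|v|^p \le c|w|^2$, and writing $g(t) := \bint_{B_\rho^{(\mu)}} |w(t)|^2/\rho^p\,\dx$, the supremum factor on the right-hand side equals $A = \sup_t g(t)$.

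For each fixed $t$ I would then apply Lemma~\ref{lem:gag} to $v(t,\cdot)$ on the ball $B_{R_\mu}(x_o)$, with target exponent $p$, Sobolev exponent $q$, a third exponent $\hat r \in [2,p]$ chosen to fulfill the scaling constraint (the choice $\hat r = 2$ saturates the constraint when $q = np/(n+2)$, while a slightly larger $\hat r$ is needed when $q = p-1$), and interpolation parameter $\theta := (2+q-p)/2$. The factor $[\bint |v|^{\hat r}/R_\mu^{\hat r}]^{(1-\theta)p/\hat r}$ produced by this interpolation can be bounded by using $|v|^{\hat r} \le c|w|^{2\hat r/p}$ together with H\"older's inequality and the identity $\bint |w|^2 = \rho^p g(t)$, which produces a factor $c\,\mu^{(p-2)(1-\theta)} g(t)^{1-\theta}$. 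Integrating in time, estimating $g(t)^{1-\theta} \le A^{1-\theta}$ uniformly, using Jensen's inequality in time (valid since $\theta p/q \le 1$ reduces to $p \ge q$), and then replacing $\biint |v|^q/R_\mu^q$ by $c\biint (|Du|^q + |F|^q)$ via Lemma~\ref{lem:poin-2} (the $\mu$-weights in Lemma~\ref{lem:poin-2} cancel exactly those introduced by $R_\mu$), I expect to arrive at
\[
L \le c\,\mu^{(p-2)(p-q)/2}\, A^{(p-q)/2}\bigg[\biint \bigl(|Du|^q + |F|^q\bigr)\,\dx\dt\bigg]^{p(2+q-p)/(2q)}.
\]

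At this stage Young's inequality with conjugate exponents $\tfrac{2}{p-q}$ and $\tfrac{2}{2+q-p}$ splits the $A^{(p-q)/2}$ factor into $\epsilon A$ plus a remainder with precisely the required $\epsilon^{-(p-q)/(2+q-p)}$ weight and the exponent $p/q$ on the gradient integral. The hard part will be to absorb the residual factor $\mu^{(p-2)(p-q)/(2+q-p)}$ left over after Young's inequality. Under the easier hypothesis \eqref{superintrinsic-mu-poin}$_2$ one has $\mu \le K^{1/p}$, so the residual factor is immediately a constant depending only on $n,p,L,K$. Under \eqref{superintrinsic-mu-poin}$_1$, where $\mu$ is not bounded a priori, the super-intrinsic ratio bound $\mu^2 \le K\biint|u|^p/(\rho^p \biint(|Du|^p + |F|^p))$ must be combined with Lemma~\ref{lem:poin-2} applied at the exponent $q = p$ so that the $\mu$-dependence is re-routed through the $|u|^p$ integral and ultimately reabsorbed, in analogy with the treatment of the term $\mathrm{II}_2$ in the proof of Lemma~\ref{lem:sob-1}. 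It is precisely in this balancing that the choice $q = \max\{np/(n+2),\,p-1\}$ becomes critical.
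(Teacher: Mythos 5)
Your interpolation parameter $\theta = (2+q-p)/2$ does not match what Lemma~\ref{lem:sob-2} actually needs; the paper uses $\theta = q/p$ with $r=2$ (so that $\theta p/q = 1$), and the difference is not cosmetic. Whatever $\hat r$ you pick, the third-exponent factor $\big[\bint|v|^{\hat r}/R_\mu^{\hat r}\big]^{(1-\theta)p/\hat r}$ scales like $\mu^{(p-2)(1-\theta)}$, so your choice leaves $\mu^{(p-2)(p-q)/2}$, while $\theta = q/p$ leaves the smaller $\mu^{(p-2)(p-q)/p}$. This breaks precisely the absorption step you postpone. Under \eqref{superintrinsic-mu-poin}$_1$ the only available bound is $\mu \le c + c\,|a|^{p/2}\rho^{-p/2}\big[\biint(|Du|^p+|F|^p)\big]^{-1/2}$ (from Lemma~\ref{lem:poin-2} at $q=p$ and Young). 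Raised to the paper's power $(p-2)(p-q)/p$, the $|a|$-part is $(|a|^{p-2}/\rho^{p-2})^{(p-q)/2}$, whose exponent $(p-q)/2$ exactly matches the exponent on the time-sliced mean $\big[\bint|u(t)-a|^2/\rho^2\big]^{(p-q)/2}$; the two merge into $\big[\bint|a|^{p-2}|u(t)-a|^2/\rho^p\big]^{(p-q)/2}\le c\,A^{(p-q)/2}$ via Lemma~\ref{lem:Acerbi-Fusco} with $\alpha=\tfrac p2$. Raised to your larger power, the $|a|$-part becomes $(|a|^{p(p-2)/2}/\rho^{p(p-2)/2})^{(p-q)/2}$, and since you already converted the time-sliced factor into $A^{(p-q)/2}$ through $|v|^{\hat r}\le c|w|^{2\hat r/p}$ and H\"older, there is no $|u-a|^2$-structure left that could absorb $|a|^{p(p-2)/2}$: only $|a|^{p-2}$ per factor of $|u-a|^2$ is admissible. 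Applying Young first, as you suggest, hits the same wall — after peeling off $\epsilon A$, what remains is $\mu^{(p-2)(p-q)/(2+q-p)}B_q^{p/q}$, and any attempt to control $|a|^p/\big(\rho^p\biint(|Du|^p+|F|^p)\big)$ via the sub-intrinsic coupling merely reintroduces a factor $\mu^2$.

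The paper avoids this by bounding $\mu$ \emph{before} converting to the $\power{u}{\frac p2}$-variable: inserting the $\mu$-bound into \eqref{gag-1} produces two terms, and the conversion to $A=\sup_t\bint|w(t)|^2/\rho^p$ happens at a different rate for each — the constant part of the $\mu$-bound gives $A^{(p-q)/p}B_q$ (H\"older plus Lemma~\ref{lem:a-b}), and the $|a|$-part gives $A^{(p-q)/2}B_q^{p(2+q-p)/(2q)}$ (Lemma~\ref{lem:Acerbi-Fusco} plus H\"older). Each term is then handled with its own Young conjugates, and since $p\ge2$ the worst $\epsilon$-weight is $\epsilon^{-(p-q)/(2+q-p)}$. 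A secondary issue: your claim that $\hat r=2$ saturates the Gagliardo--Nirenberg constraint when $q=np/(n+2)$ is correct only for $p=2$; substituting $\theta=(2+q-p)/2$, $q=np/(n+2)$, $\hat r=2$ into $-n/p\le\theta(1-n/q)-(1-\theta)n/\hat r$ reduces to $(p-2)^2\le0$, so for $2<p\le1+n/2$ you would need $\hat r>2$ in this case as well.
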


\begin{proof}
As before, we omit the reference point $z_o$ in our
notation. Moreover, we abbreviate $a:=(u)_{\rho}^{(\mu)}$. Applying
Gagliardo-Nirenberg's inequality in Lemma~\ref{lem:gag} with $(p,q,r,\theta)$ replaced by
$(p,q,2,\frac{q}{p})$
and Lemma~\ref{lem:poin-2}, we find that
\begin{align}\label{gag-1}
    &\biint_{Q_\rho^{(\mu)}}
    \frac{|u-a|^p}{\mu^{2-p}\rho^p} \,\dx\dt \nonumber\\
	&\quad\le
	c \sup_{t\in \Lambda_\rho}
	\bigg[
	\bint_{B_\rho^{(\mu)}} \frac{|u(t)-a|^2}{\mu^{\frac{2(2-p)}{p}}\rho^2} \,\dx
	\bigg]^{\frac{p-q}{2}}
	\biint_{Q_\rho^{(\mu)}} 
	\bigg[
	\frac{|u-a|^q}{\mu^{\frac{q(2-p)}{p}}\rho^q} +
	|Du|^q \bigg] \,\dx \dt \nonumber\\
	&\quad\le
	c\sup_{t\in \Lambda_\rho}
	\bigg[\mu^{\frac{2(p-2)}{p}}
	\bint_{B_\rho^{(\mu)}} \frac{|u(t)-a|^2}{\rho^2} \,\dx
	\bigg]^{\frac{p-q}{2}}
	\biint_{Q_\rho^{(\mu)}} 
	\big[|Du|^q + |F|^q \big] \,\dx \dt .
\end{align}
We now exploit assumption \eqref{superintrinsic-mu-poin} in order to obtain an upper bound for $\mu$. If \eqref{superintrinsic-mu-poin}$_2$ is satisfied we have $\mu\le K^{1/p}$. On the other hand, if \eqref{superintrinsic-mu-poin}$_1$ is in force we apply Lemma~\ref{lem:poin-2} to infer that
\begin{align*}
	\bigg[\biint_{Q_{\rho}^{(\mu)}} \frac{\abs{u}^{p}}{\mu^{2-p}\rho^p}\dx\dt
	\bigg]^\frac1{p} 
	&\le
	\bigg[ 
	\biint_{Q_{\rho}^{(\mu)}} 
	\frac{|u - a|^{p}}{\mu^{2-p}\rho^p}\dx\dt\bigg]^\frac1{p}
	+
	\frac{\mu^{\frac{p-2}{p}}|a|}{\rho} \\
	&\le
	c\,\bigg[ 
    \biint_{Q_{\rho}^{(\mu)}} \big[|Du|^p + |F|^p\big] \dx\dt
    \bigg]^\frac1{p}
	+
	\frac{\mu^{\frac{p-2}{p}}|a|}{\rho} ,
\end{align*}
which in combination with the $\mu$-super-intrinsic coupling \eqref{superintrinsic-mu-poin}$_1$ yields
\begin{align*}
	\mu 
	&\le
	c
	+
	\frac{\mu^{\frac{p-2}{p}}|a|}
	{\rho\displaystyle{
	\bigg[\biint_{Q_{\rho}^{(\mu)}} \big[|Du|^p + |F|^p\big] \dx\dt
    \bigg]^{\frac1{p}}}} \\
	&\le
	\tfrac12\mu +
	c
	+
	\frac{|a|^{\frac{p}{2}}}
	{\rho^{\frac{p}{2}}\displaystyle{
	\bigg[\biint_{Q_{\rho}^{(\mu)}} \big[|Du|^p + |F|^p\big] \dx\dt
    \bigg]^{\frac1{2}}}} .
\end{align*}
This shows that
\begin{align*}
	\mu 
	\le
	c
	+
	\frac{2|a|^{\frac{p}{2}}}
	{\rho^{\frac{p}{2}}\displaystyle{
	\bigg[\biint_{Q_{\rho}^{(\mu)}} \big[|Du|^p + |F|^p\big] \dx\dt
    \bigg]^{\frac1{2}}}} 
\end{align*}
holds true in any case. Inserting this upper bound for $\mu$ into \eqref{gag-1} yields
\begin{align*}
    \biint_{Q_\rho^{(\mu)}}
    \frac{|u-a|^p}{\mu^{2-p}\rho^p} \,\dx\dt
    \le
    c\, \big[\mathrm{I}_1 + \mathrm{I}_2\big],
\end{align*}
with the obvious abbreviations 
\begin{align*}
    \mathrm{I}_1
    :=
	c\sup_{t\in \Lambda_\rho}
	\bigg[
	\bint_{B_\rho^{(\mu)}} \frac{|u(t)-a|^2}{\rho^2} \,\dx
	\bigg]^{\frac{p-q}{2}}
	\biint_{Q_\rho^{(\mu)}} 
	\big[|Du|^q + |F|^q \big] \,\dx \dt .
\end{align*}
and 
\begin{align*}
    \mathrm{I}_2
    :=
    \sup_{t\in \Lambda_\rho}
	\bigg[
	\bint_{B_\rho^{(\mu)}} 
	\frac{|a|^{p-2}|u(t)-a|^2}{\rho^p} \,\dx
	\bigg]^{\frac{p-q}{2}}
	\frac{\displaystyle{\biint_{Q_\rho^{(\mu)}} \big[|Du|^q + |F|^q \big] \,\dx\dt}}
	{\displaystyle{\bigg[\biint_{Q_{\rho}^{(\mu)}} \big[|Du|^p + |F|^p\big] \dx\dt
    \bigg]^{\frac{(p-2)(p-q)}{2p}}}} .
\end{align*}
For the first term, we use H\"older's inequality and Lemma~\ref{lem:a-b} to infer that
\begin{align*}
    \mathrm{I}_1
	&\le
	c\sup_{t\in \Lambda_\rho}
	\bigg[
	\bint_{B_\rho^{(\mu)}} \frac{|u(t)-a|^p}{\rho^p} \,\dx
	\bigg]^{\frac{p-q}{p}}
	\biint_{Q_\rho^{(\mu)}} 
	\big[|Du|^q + |F|^q \big] \,\dx \dt \\
	&\le
	c\sup_{t\in \Lambda_\rho}
	\bigg[
	\bint_{B_\rho^{(\mu)}} 
	\frac{\big|\power{u}{\frac p2}(t)-
	\power{a}{\frac{p}{2}}\big|^2}{\rho^p} 
	\,\dx
	\bigg]^{\frac{p-q}{p}}
	\biint_{Q_\rho^{(\mu)}} 
	\big[|Du|^q + |F|^q \big] \,\dx \dt  ,
\end{align*}
with a constant $c= c(n,p,L,K)$.
Now we turn our attention to the second term. With the help of
Lemma~\ref{lem:Acerbi-Fusco} applied with $\alpha=\frac p2$ and H\"older's inequality, we find that
\begin{align*}
    \mathrm{I}_2
    &\le
    c\sup_{t\in \Lambda_\rho}
	\bigg[
	\bint_{B_\rho^{(\mu)}} 
	\frac{\big|\power{u}{\frac p2}(t)-\power{a}{\frac p2}\big|^2}{\rho^p} \,\dx
	\bigg]^{\frac{p-q}{2}}
	\bigg[\biint_{Q_\rho^{(\mu)}} 
	\big[|Du|^q + |F|^q \big] \,\dx \dt \bigg]^{\frac{p(2+q-p)}{2q}} ,
\end{align*}
for a constant $c=c(n,p)$.
We add the resulting inequalities for $\mathrm{I}_1$ and $\mathrm{I}_2$ and apply Young's inequality. This yields the desired result.
\end{proof}

\section{Reverse H\"older inequality}

Our aim in this section is to derive a reverse H\"older type inequality for weak solutions of \eqref{eq-doubly}. It will be a consequence of the energy estimate in Lemma~\ref{lem:energy} and the Sobolev-Poincar\'e type inequality in Lemma~\ref{lem:sob-1}, respectively Lemma~\ref{lem:sob-2}. 

In contrast to Section~\ref{sec:poin} we now consider two concentric cylinders $Q_{\rho}^{(\mu)}(z_o)\subset Q_{2\rho}^{(\mu)}(z_o)\subseteq\Omega_T$ with $\rho,\mu>0$. We suppose that a $\mu$-sub-intrinsic coupling of the type
\begin{equation}\label{subintrinsic-mu}
	\frac{\displaystyle{\biint_{Q_{2\rho}^{(\mu)}(z_o)} 
	\frac{\abs{u}^{p}}{\mu^{2-\pf}(2\rho)^p}\dx\dt}}
	{\displaystyle{\biint_{Q_{\rho}^{(\mu)}(z_o)} \big[ |Du|^p +|F|^p\big]\dx\dt}}
	\le
	K\mu^{p}
\end{equation}
is satisfied for some $K\ge 1$. 
Furthermore, we assume that either 
\begin{equation}\label{superintrinsic-mu}
	\mu^{p}
	\le
	K
	\frac{\displaystyle{\biint_{Q_{\rho}^{(\mu)}(z_o)} 
	\frac{\abs{u}^{p}}{\mu^{2-\pf}\rho^p}\dx\dt}}{\displaystyle{\biint_{Q_{2\rho}^{(\mu)}(z_o)} \big[ |Du|^p +|F|^p\big]\dx\dt}}
	\qquad\mbox{or}\qquad
	\mu^{p}
	\le
	K 
\end{equation}
holds true. Then, we obtain the following reverse H\"older type inequality.

\begin{proposition}\label{prop:revhoelder-int}
Let $p>\max\{\frac{2n}{n+2},1\}$ and $u$ be a weak solution to \eqref{eq-doubly} in $\Omega_T$ in the sense of Definition~{\upshape\ref{def:weak_solution}}. 
Then, on any cylinder $Q_{2\rho}^{(\mu)}(z_o)\subseteq\Omega_T$ with $\rho,\mu>0$ satisfying~\eqref{subintrinsic-mu} and \eqref{superintrinsic-mu}, 
we have
\begin{align*}
	\biint_{Q_{\rho}^{(\mu)}(z_o)} |Du|^p \dx\dt 
	\le
	c\bigg[\biint_{Q_{2\rho}^{(\mu)}(z_o)} 
	|Du|^{q} \dx\dt \bigg]^{\frac{p}{q}} +
	c\, \biint_{Q_{2\rho}^{(\mu)}(z_o)} |F|^{p} \dx\dt 
\end{align*}
with the exponent $q:=\max\big\{\tfrac{2n}{n+2}, \frac{np}{n+2},1,p-1\big\}$
and a constant $c=c(n,p,\nu, L, K)$.
\end{proposition}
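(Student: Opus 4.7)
The plan is to combine the energy estimate from Lemma~\ref{lem:energy} with the parabolic Sobolev-Poincar\'e inequality of Lemma~\ref{lem:sob-1} (in the subquadratic range) or Lemma~\ref{lem:sob-2} (in the superquadratic range), and to close the resulting estimate via the iteration Lemma~\ref{lem:tech}. Fix $a:=(u)^{(\mu)}_{z_o;\rho}$ and, for radii $\rho\le r<r'\le 2\rho$, consider the nested cylinders $Q_r^{(\mu)}(z_o)\subseteq Q_{r'}^{(\mu)}(z_o)\subseteq Q_{2\rho}^{(\mu)}(z_o)$. As a preliminary step I would verify that the coupling hypotheses \eqref{subintrinsic-mu}-\eqref{superintrinsic-mu} on the pair $(Q_\rho^{(\mu)},Q_{2\rho}^{(\mu)})$ imply the intrinsic conditions \eqref{subintrinsic-mu-poin}-\eqref{superintrinsic-mu-poin} on every intermediate $Q_{r'}^{(\mu)}$, with the constant $K$ enlarged only by a universal factor coming from the ratio of Lebesgue measures; this makes all the results of Section~\ref{sec:poin} available on $Q_{r'}^{(\mu)}$.

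Applying Lemma~\ref{lem:energy} on the pair $Q_r^{(\mu)}\subseteq Q_{r'}^{(\mu)}$ (with the $\mu$-intrinsic time-lengths) produces an estimate whose right-hand side carries the three terms $|\power{u}{p/2}-\power{a}{p/2}|^2/(\mu^{p-2}((r')^p-r^p))$, $|u-a|^p/(r'-r)^p$, and $|F|^p$. In the subquadratic range $\max\{\frac{2n}{n+2},1\}<p\le 2$, I would dominate the $|u-a|^p$-term by the $|\power{u}{p/2}-\power{a}{p/2}|^2$-term via Lemma~\ref{lem:2<1-1} combined with Young's inequality, and then invoke Lemma~\ref{lem:sob-1} with a small parameter $\varepsilon$ in order to bound the resulting $L^2$-integral of $|\power{u}{p/2}-\power{a}{p/2}|^2$ by $\varepsilon$ times (supremum of the boundary term $+$ spatial energy of $Du$) — both of which match the left-hand side of the energy inequality and can therefore be absorbed — plus a remainder of reverse-H\"older type involving $|Du|^q$ with $q=\max\{\frac{2n}{n+2},1\}$. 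In the superquadratic range $p\ge 2$ the r\^oles of $|u-a|^p$ and $|\power{u}{p/2}-\power{a}{p/2}|^2$ are reversed, so I would instead pair Lemma~\ref{lem:1<2-2} with Lemma~\ref{lem:sob-2} to carry out the same scheme, this time producing a remainder in $|Du|^q$ with $q=\max\{\frac{np}{n+2},p-1\}$.

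Combining the outcomes of the two cases yields, for all $\rho\le r<r'\le 2\rho$, an inequality of the form
\begin{equation*}
    \phi(r)\;\le\;\vartheta\,\phi(r')\;+\;\frac{c\,A}{(r'-r)^p}\;+\;c\biint_{Q_{2\rho}^{(\mu)}(z_o)}|F|^p\,\dx\dt,
\end{equation*}
where $\phi(r):=\iint_{Q_r^{(\mu)}(z_o)}|Du|^p\,\dx\dt$ augmented by the supremum-in-time boundary contribution, $\vartheta\in(0,1)$ is controlled by our choice of $\varepsilon$, and $A$ is bounded by the $(p/q)$-power of $\iint_{Q_{2\rho}^{(\mu)}}|Du|^q$. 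The iteration Lemma~\ref{lem:tech} then re-absorbs the $\vartheta\,\phi(r')$-term and delivers the reverse H\"older estimate with the combined exponent $q=\max\{\frac{2n}{n+2},\frac{np}{n+2},1,p-1\}$. The main technical obstacle I foresee is the bookkeeping needed to transfer the sub/super-intrinsic conditions between the cylinders $Q_\rho^{(\mu)}\subseteq Q_{2\rho}^{(\mu)}$ given by the hypothesis and the intermediate cylinders on which the Section~\ref{sec:poin} lemmas are applied, together with tracking precisely which terms lie in the absorbable portion and which contribute to the reverse-H\"older remainder.
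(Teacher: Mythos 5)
Your proposal follows essentially the same route as the paper's proof: you apply the energy estimate of Lemma~\ref{lem:energy} on a nested pair of intermediate cylinders, use Lemma~\ref{lem:2<1-1} together with Lemma~\ref{lem:sob-1} in the subquadratic case (respectively Lemma~\ref{lem:1<2-2} with Lemma~\ref{lem:sob-2} in the superquadratic case) to convert the boundary-type term into an absorbable piece plus a reverse-H\"older remainder in $|Du|^q$, and then close via the iteration Lemma~\ref{lem:tech}. The one small deviation is that you fix $a:=(u)^{(\mu)}_{z_o;\rho}$ from the outset, whereas the paper instead takes $a=(u)^{(\mu)}_{z_o;r}$ in the energy estimate and switches to $(u)^{(\mu)}_{z_o;s}$ on the larger cylinder via Lemma~\ref{lem:alphalemma}; this is a harmless bookkeeping difference that can be reconciled with an extra invocation of that same lemma, and you already flag this transfer-of-means issue as the main technicality.
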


\begin{proof} 
Once again, we omit the reference to the center $z_o$ in the notation.
We consider radii $r,s$ with $\rho\le r<s\le 2\rho$ and let
\begin{equation}\label{R}
  \mathcal R_{r,s}
  :=
  \frac{s}{s-r}.
\end{equation}
Note that hypothesis \eqref{subintrinsic-mu} and \eqref{superintrinsic-mu} imply that the coupling conditions
\eqref{subintrinsic-mu-poin} and \eqref{superintrinsic-mu-poin} are
satisfied on $Q_s^{(\mu)}$ with the constant $2^{2n+3p}K$
in place of $K$. 
From now on we distinguish between the cases $\max\{\frac{2n}{n+2},1\}<p\le2$ and $p\ge2$. 

{\it The case $\max\{\frac{2n}{n+2},1\}<p\le2$.} 
Here the energy estimate from Lemma \ref{lem:energy} reads as  
\begin{align}\label{rev-h}
	\sup_{t \in \Lambda^{(\mu)}_r} &
	\mint_{B_r} 
	\frac{\big|\power{u}{\frac{p}{2}}(t) - \power{\big[(u)_r^{(\mu)}\big]}{\frac p2}\big|^2}{\mu^{p-2}r^p} \d x +
	\biint_{Q_r^{(\mu)}} |Du|^p \dx\dt \nonumber\\
	&\le
	c\,\biint_{Q_s^{(\mu)}} 
	\frac{\big|\power{u}{\frac p2} - \power{\big[(u)_r^{(\mu)}\big]}{\frac p2}\big|^2}
	{\mu^{p-2}(s^p-r^p)} \dx\dt
	+
	c\,\biint_{Q_s^{(\mu)}}
	\frac{\big|u-(u)_r^{(\mu)}\big|^p}{(s-r)^{p}}  \dx\dt \nonumber\\
	&\quad +
	c\, \biint_{Q_{s}^{(\mu)}} |F|^{p}\dx\dt \nonumber\\
	&=:
	\mathrm{I} + \mathrm{II} + \mathrm{III},
\end{align}
with the obvious meaning of I--III.  The constant $c$ depends only on $p,\nu$, and $L$.
We estimate $\mathrm{II}$ with the help of Lemma~\ref{lem:alphalemma}, Lemma~\ref{lem:2<1-1} and Young's inequality with the result that
\begin{align*}
	\mathrm{II}
	&\le
	c\,\mathcal R_{r,s}^{p}
	\biint_{Q_s^{(\mu)}} 
	\frac{\big|u-(u)_{s}^{(\mu)}\big|^{p}}{s^p}\, \dx\dt \\
	&\le
	c\,\mathcal R_{r,s}^{p}
	\bigg[\biint_{Q_{s}^{(\mu)}} \big[ |Du|^p +|F|^p\big]\dx\dt
	\bigg]^\frac{2-p}{2}
	\Bigg[
    \biint_{Q_s^{(\mu)}}
    \frac{\big|\power{u}{\frac{p}{2}}-
    \power{\big[(u)_{s}^{(\mu)}\big]}{\frac p2}\big|^2}
    {\mu^{p-2}s^p}\,\dx\dt
    \Bigg]^{\frac p2} \\
    &\le
    c\,\mathcal R_{r,s}^{p} \Bigg[
    \delta\, \biint_{Q_{s}^{(\mu)}} \big[ |Du|^p +|F|^p\big]\dx\dt +
    \frac{1}{\delta^{\frac{2-p}{p}}} 
    \biint_{Q_s^{(\mu)}}
    \frac{\big|\power{u}{\frac{p}{2}}-
    \power{\big[(u)_{s}^{(\mu)}\big]}{\frac p2}\big|^2}
    {\mu^{p-2}s^p}\,\dx\dt \Bigg]
\end{align*}
holds true for any $\delta\in(0,1]$. 
Taking into account that $(s-r)^p\le s^p-r^p$, we obtain due to Lemma~\ref{lem:alphalemma} that 
\begin{align*}
	\mathrm{I}
	&\le
	c\,\mathcal R_{r,s}^{p}
	\biint_{Q_s^{(\mu)}}
	\frac{\big|\power{u}{\frac p2} - \power{\big[(u)_s^{(\mu)}\big]}{\frac p2}\big|^2}
	{\mu^{p-2}s^{p}} \dx\dt . 
\end{align*}
We add both inequalities and apply Lemma~\ref{lem:sob-1} on $Q_s^{(\mu)}$ with $\epsilon=\delta^{\frac2p}$. In this way we obtain
\begin{align*}
	\mathrm{I} + \mathrm{II}
    &\le
    c\,\mathcal R_{r,s}^{p} \Bigg[
    \delta\, \biint_{Q_{s}^{(\mu)}} \big[ |Du|^p +|F|^p\big]\dx\dt +
    \frac{1}{\delta^{\frac{2-p}{p}}} 
    \biint_{Q_s^{(\mu)}}
    \frac{\big|\power{u}{\frac{p}{2}}-
    \power{\big[(u)_{s}^{(\mu)}\big]}{\frac p2}\big|^2}
    {\mu^{p-2}s^p}\,\dx\dt \Bigg] \\
    &\le
	c\,\delta\mathcal R_{r,s}^{p}\Bigg[
	\sup_{t\in \Lambda_s^{(\mu)}}
	\bint_{B_s}
	\frac{\big|\power{u}{\frac{p}{2}}(t)-
	\power{\big[(u)_{s}^{(\mu)}\big]}{\frac{p}{2}}\big|^2}{\mu^{p-2}s^p}
	\dx +
	\biint_{Q_{s}^{(\mu)}} |Du|^p \dx\dt\Bigg]\\
	&\quad+
	\frac{c\,\mathcal R_{r,s}^{p}}{\delta^{\frac{4}{pq(p-1)}-1}}
	\Bigg[
	\bigg[\biint_{Q_s^{(\mu)}} |Du|^{q} \dx\dt\bigg]^{\frac{p}{q}} +
	\biint_{Q_s^{(\mu)}} |F|^{q} \dx\dt 
	\Bigg],
\end{align*}
where $q=\max\{\frac{2n}{n+2},1\}$. We insert this inequality into \eqref{rev-h}. Then, we choose 
\begin{equation}\label{choice-delta}
	\delta=\frac{1}{2c\, \mathcal R_{r,s}^{p}}
\end{equation}
and apply the Iteration Lemma \ref{lem:tech} to re-absorb the term $\frac12[\dots]$ from the right-hand side into the left. This leads to the claimed reverse H\"older type inequality, i.e.~to
\begin{align*}
	\sup_{t \in \Lambda^{(\mu)}_\rho}
	\mint_{B_\rho} &
	\frac{\big|\power{u}{\frac{p}{2}}(t) - \power{\big[(u)_\rho^{(\mu)}\big]}{\frac p2}\big|^2}{\mu^{p-2}\rho^p} \d x +
	\biint_{Q_\rho^{(\mu)}} |Du|^p \dx\dt \\
	&\le
	c\,\Bigg[\bigg[\biint_{Q_{2\rho}^{(\mu)}}
	|Du|^{q} \dx\dt \bigg]^{\frac{p}{q}} +
	 \biint_{Q_{2\rho}^{(\mu)}}|F|^{p} \dx\dt\Bigg] 
\end{align*}
and finishes the proof of Proposition \ref{prop:revhoelder-int} in the case $\max\{\frac{2n}{n+2},1\}<p\le 2$.

{\it The case $p\ge2$.}
In this case, the energy estimate from Lemma \ref{lem:energy} yields 
\begin{align}\label{rev-h-2}
	\sup_{t \in \Lambda_r} &
	\mint_{B_r^{(\mu)}} 
	\frac{\big|\power{u}{\frac{p}{2}}(t) - \power{\big[(u)_r^{(\mu)}\big]}{\frac p2}\big|^2}{r^p} \d x +
	\biint_{Q_r^{(\mu)}} |Du|^p \dx\dt \nonumber\\
	&\le
	c\,\biint_{Q_s^{(\mu)}} 
	\frac{\big|\power{u}{\frac p2} - \power{\big[(u)_r^{(\mu)}\big]}{\frac p2}\big|^2}
	{s^p-r^p} \dx\dt
	+
	c\,\biint_{Q_s^{(\mu)}}
	\frac{\big|u-(u)_r^{(\mu)}\big|^p}{\mu^{2-p}(s-r)^{p}}  \dx\dt \nonumber\\
	&\quad +
	c\, \biint_{Q_{s}^{(\mu)}} |F|^{p}\dx\dt \nonumber\\
	&=:
	\mathrm{I} + \mathrm{II} + \mathrm{III}
\end{align}
with the obvious meaning of $\mathrm{I}$--$\mathrm{III}$. 
Now, we estimate the term $\mathrm{I}$ by using the fact that $(s-r)^p\le
s^p-r^p$, Lemma~\ref{lem:alphalemma}, Lemma~\ref{lem:1<2-2} and Young's inequality. 
In this way we obtain 
\begin{align*}
  \mathrm{I}
  &\le
  c\,\mathcal R_{r,s}^{p}
  \biint_{Q_s^{(\mu)}}
  \frac{\big|\power{u}{\frac{p}{2}}-
  \power{\big[(u)_{s}^{(\mu)}\big]}{\frac p2}\big|^2}
  {s^p}\,\dx\dt\\
  &\le
  c\,\mathcal R_{r,s}^{p}
  \bigg[\biint_{Q_{s}^{(\mu)}} \big[ |Du|^p +|F|^p\big]\dx\dt
  \bigg]^\frac{p-2}{p}
  \bigg[
	\biint_{Q_s^{(\mu)}} 
	\frac{\big|u-(u)_{s}^{(\mu)}\big|^{p}}{\mu^{2-p}s^p}\, \dx\dt
	\bigg]^\frac2p \\
    &\le
    c\,\mathcal R_{r,s}^{p} \Bigg[
    \delta\, \biint_{Q_{s}^{(\mu)}} \big[ |Du|^p +|F|^p\big]\dx\dt +
    \frac{1}{\delta^{\frac{p-2}2}} 
    \biint_{Q_s^{(\mu)}} 
    \frac{\big|u-(u)_{s}^{(\mu)}\big|^{p}}{\mu^{2-p}s^p}\, \dx\dt \Bigg],
\end{align*}
for any $\delta\in(0,1]$. 
Moreover, from Lemma~\ref{lem:alphalemma} we know that 
\begin{align*}
	\mathrm{II}
	&\le
	c\,\mathcal R_{r,s}^{p}
	\biint_{Q_s^{(\mu)}} 
	\frac{\big|u-(u)_{s}^{(\mu)}\big|^{p}}{\mu^{2-p}s^p}\, \dx\dt .
\end{align*}
We combine the preceding estimates and apply Lemma~\ref{lem:sob-2}
with $\eps=\delta^{\frac p2}$ in order to obtain
\begin{align*}
  	\mathrm{I} + \mathrm{II}
  	&\le
  	c\,\mathcal R_{r,s}^{p} \Bigg[
  	\delta\, \biint_{Q_{s}^{(\mu)}} \big[ |Du|^p +|F|^p\big]\dx\dt +
  	\frac{1}{\delta^{\frac{p-2}2}} 
  	\biint_{Q_s^{(\mu)}} 
  	\frac{\big|u-(u)_{s}^{(\mu)}\big|^{p}}{\mu^{2-p}s^p}\, \dx\dt \Bigg]\\
  	&\le
  	c\,\delta\,\mathcal R_{r,s}^{p} \Bigg[
	\sup_{t\in \Lambda_s}
    \bint_{B_s^{(\mu)}}
	\frac{\big|\power{u}{\frac{p}{2}}(t)-
	\power{\big[(u)_{s}^{(\mu)}\big]}{\frac{p}{2}}\big|^2}{s^p}
	\dx +
    \biint_{Q_{s}^{(\mu)}} |Du|^p \dx\dt \Bigg] \\
	&\quad+
	\frac{c\,\mathcal R_{r,s}^{p}}{\delta^{\frac{p}{2+p-q}-1}}
	\Bigg[
	\bigg[\biint_{Q_s^{(\mu)}} |Du|^{q} \dx\dt\bigg]^{\frac{p}{q}} +
	\biint_{Q_s^{(\mu)}} |F|^{p} \dx\dt
	\Bigg] ,
\end{align*}
where $q=\max\{\frac{np}{n+2},p{-}1\}$. 
As before, we insert this inequality into \eqref{rev-h-2}, choose $\delta\in(0,1]$ of the form \eqref{choice-delta}
and apply the Iteration Lemma~\ref{lem:tech}.          
This allows to re-absorb the term $\frac12[\dots]$ into the left-hand side and yields the desired reverse H\"older type inequality in the remaining case $p\ge 2$. 
This finishes the proof of the proposition.
\end{proof}

\section{Higher integrability: Proof of Theorem \ref{thm:higherint}}\label{sec:hi}

In this section we finally prove the higher integrability result of Theorem \ref{thm:higherint}. 
We consider a fixed cylinder $Q_{4R}\equiv Q_{4R,(4R)^p}(\mathfrak z_o)\subset\Omega_T$
with $R>0$ and let
\begin{equation}\label{first-lambda-0}
	\lambda
	\ge
	\lambda_o
	\ge
	1+\bigg[\biint_{Q_{4R}} 
	\frac{\abs{u}^{p}}{(4R)^p} \d x\d t\bigg]^{\frac{1}{p}}.
\end{equation}
We recall the notation \eqref{cylinder} for the scaled cylinders $Q_\rho^{(\mu)}(z_o)$, and observe that $Q_\rho^{(\mu)}(z_o)\subset Q_\rho^{(\kappa)}(z_o)$ whenever $\kappa\le\mu$. Moreover, we recall the abbreviation $\pf=\max\{2,p\}$.

\subsection{Construction of a non-uniform system of cylinders}\label{sec:cylinders}
The main difficulty now is to construct a covering of the
$\lambda$-superlevel set of $|Du|$ by cylinders on which the reverse
H\"older type inequality from Proposition~\ref{prop:revhoelder-int} is
applicable. This means that the scaled cylinders have to satisfy
hypothesis \eqref{subintrinsic-mu} and \eqref{superintrinsic-mu}. The
following construction of a non-uniform system of cylinders is
inspired by the one in \cite{Schwarzacher, Gianazza-Schwarzacher}.  Let $z_o\in Q_{2R}$. For a radius $\rho\in (0,R]$ we now define
$$
	\widetilde\mu_{z_o;\rho}^{(\lambda)}
	:=
	\inf\bigg\{\mu\in[1,\infty):
	\frac{1}{|Q_\rho|}
	\iint_{Q_{\rho}^{(\mu)}(z_o)}
	\frac{\abs{u}^{p}}{\rho^p} \dx\dt 
	\le 
	\mu^{p-\p}\lambda^{p} \bigg\} ,
$$
where $\p:=2-p+(\pf-2)(2+\frac{n}{p})$. 
Note that 
$$
	\mu^{p-\p}
	=
	\left\{\begin{array}{ll}
	\mu^{2(p-1)}& \quad \mbox{if $p\le 2$,}\\[5pt]
	\mu^\frac{2n-p(n-2)}{p}& \quad \mbox{if $p> 2$.}
	\end{array}\right.
$$
In particular, the restriction $p<\frac{2n}{n-2}$ for $n>2$ ensures
that $p-\p>0$ in any case.
However, we note that in dimensions $n>2$,
the exponent of $\mu$ tends to zero in the limit $p\uparrow \frac{2n}{n-2}$. This is the only point where the restriction $p<\frac{2n}{n-2}$ enters the proof. 
If $z_o$ and $\lambda$ are fixed and if the meaning is clear from the context we write $\widetilde\mu_\rho$ instead of
$\widetilde\mu_{z_o;\rho}^{(\lambda)}$. 
Observe that the set of  those $\mu\ge1$ for which the condition in the infimum is satisfied is not empty.
In fact, in the limit $\mu\uparrow\infty$ the integral on the left-hand side converges to zero
(note that the measure of $Q_{\rho}^{(\mu)}(z_o)$ shrinks to 0),
while the right-hand side blows up with speed $\mu^{p-\p}$ (recall
that $p-\p>0$). We point out that the condition 
in the infimum is equivalent to
$$
	\biint_{Q_{\rho}^{(\mu)}(z_o)}
	\frac{\abs{u}^{p}}{\mu^{2-\pf}\rho^p} \dx\dt 
	\le 
	\mu^{p}\lambda^p .
$$
Therefore, we either  have
$$
	\widetilde\mu_\rho=1
	\qquad\mbox{and}\qquad
	\biint_{Q_{\rho}^{(\widetilde\mu_\rho)}(z_o)} 
	\frac{\abs{u}^{p}}{\widetilde\mu_\rho^{2-\pf}\rho^p} \dx\dt
	\le
	\widetilde\mu_\rho^{p}\lambda^p
	=
	\lambda^{p},
$$
or otherwise
\begin{equation}\label{theta>lambda}
	\widetilde\mu_\rho>1
	\qquad\mbox{and}\qquad
	\biint_{Q_{\rho}^{(\widetilde\mu_\rho)}(z_o)} 
	\frac{\abs{u}^{p}}{\widetilde\mu_\rho^{2-\pf}\rho^p} \dx\dt
	=
	\widetilde\mu_\rho^{p}\lambda^p .
\end{equation}
Using this observation for $\rho=R$, we have that either $\widetilde \mu_{R}= 1$, or $\widetilde\mu_{R}>1$ and 
\begin{align*}
	\widetilde\mu_{R}^{p-\p}
	=
	\frac{1}{\lambda^{p}|Q_R|}
	\iint_{Q_{R}^{(\widetilde\mu_R)}(z_o)}
	\frac{\abs{u}^{p}}{R^p} \dx\dt 
	\le
	\frac{1}{\lambda^p}
	\biint_{Q_{R}(z_o)} \frac{\abs{u}^{p}}{R^p}\dx\dt 
	\le
	\frac{4^{n+2p}\lambda_o^{p}}{\lambda^p}
	\le 
	4^{n+2p} .
\end{align*}
Therefore, in any case we have the bound
\begin{align}\label{bound-theta-R}
	\widetilde\mu_{R} 
	\le
    4^{\frac{n+2p}{p-\p}}.
\end{align}
Our next aim is to ensure that the mapping $(0,R]\ni\rho\mapsto
\widetilde\mu_\rho$ is continuous.
To this end, we consider $\rho\in(0,R]$ and  $\eps>0$, and define
$\mu_+:=\widetilde\mu_\rho+\eps$. Then,  
there exists $\delta=\delta(\eps,\rho)>0$ such that
\begin{equation*}
 	 \frac{1}{|Q_r|}\iint_{Q_{r}^{(\mu_+)}(z_o)} 
  	 \frac{\abs{u}^{p}}{r^p} \dx\dt 
  	<
  	\mu_{+}^{p-\p}\lambda^{p}
\end{equation*}
for any $r\in(0,R]$ with $|r-\rho|<\delta$. In fact, due to the definition of $\widetilde\mu_\rho$ the preceding strict inequality holds for $r=\rho$, since $\mu_+>\widetilde\mu_\rho$ and $Q_{\rho}^{(\mu_+)}(z_o) \subset Q_{\rho}^{(\widetilde\mu_\rho)}(z_o)$. 
The claim now follows, since the left-hand side depends continuously on the radius $r$. Recalling the very definition of $\widetilde\mu_r$, the last inequality implies $\widetilde\mu_r\le\mu_+=\widetilde\mu_\rho+\eps$ for any $r\in(0,R]$ with $|r-\rho|<\delta$. 
It remains to prove $\widetilde\mu_r\ge\mu_-:=\widetilde\mu_\rho-\eps$ for $r$ close to $\rho$. 
If $\widetilde\mu_\rho=1$, then we have $\widetilde\mu_r\ge 1 =\widetilde\mu_\rho\ge \mu_-$. 
If $\widetilde\mu_\rho>1$, we get after diminishing
$\delta=\delta(\eps,\rho)>0$ if necessary that 
\begin{equation*}
	\frac{1}{|Q_r|}
  	\iint_{Q_{r}^{(\mu_-)}(z_o)}
  	\frac{\abs{u}^{p}}{r^p} \dx\dt 
  	>
  	\mu_-^{p-\p} \lambda^p
\end{equation*}
for all $r\in(0,R]$ with $|r-\rho|<\delta$. For $r=\rho$, this is a direct consequence of the definition of $\widetilde\mu_\rho$, since $\widetilde\mu_\rho>\mu_-$ and 
$Q_{\rho}^{(\widetilde\mu_\rho)}(z_o)\subset Q_{\rho}^{(\mu_-)}(z_o)$.
Due to the continuity of the left-hand side with respect to $r$, this implies the claim for $r$ with $|r-\rho|<\delta$ small enough. 
The preceding inequality implies that $\widetilde\mu_r\ge\mu_-:=\widetilde\mu_\rho-\eps$. 
This completes the proof of the continuity of $(0,R]\ni\rho\mapsto
\widetilde\mu_\rho$.

Unfortunately, the mapping $\rho\mapsto\widetilde\mu_\rho$ might not be
monotone. For this reason we modify $\widetilde\mu_\rho$ in such a way that the modification  -- denoted by $\mu_\rho$ -- becomes monotone. Therefore, we define
$$
	\mu_\rho
	\equiv
	\mu_{z_o;\rho}^{(\lambda)}
	:=
	\max_{r\in[\rho,R]} \widetilde\mu_{z_o;r}^{(\lambda)}. 
$$
As before, we abbreviate $\mu_{z_o;\rho}^{(\lambda)}$ by $\mu_\rho$ 
if $z_o$ and $\lambda$ are fixed, so that no confusion is possible.
By construction the mapping
$(0,R]\ni\rho\mapsto \mu_\rho$ is
continuous and monotonically decreasing. 
Moreover, the cylinders $Q_{s}^{(\mu_\rho)}(z_o)$ are $\mu$-sub-intrinsic (with constant $K =1$) whenever $\rho\le s$. More precisely, we have\footnote{
Note that later 
$\lambda^p\approx\displaystyle{\biint_{Q_{s}^{(\mu_{\rho})}(z_o)}\big[|Du|^p+|F|^p\big]\dx\dt}$.  This justifies the notion $\mu$-sub-intrinsic in the sense of \eqref{subintrinsic-mu}.} 
\begin{align}\label{sub-intrinsic}
	\biint_{Q_{s}^{(\mu_{\rho})}(z_o)} 
	\frac{\abs{u}^{p}}{\mu_\rho^{2-\pf}s^p} \dx\dt
	\le 
	\mu_\rho^{p}\lambda^p
	\quad\mbox{for any $0<\rho\le s\le R$.}
\end{align}
In fact, the definition of $\mu_s$ and the   monotonicity of $\mu_\rho$
imply $\widetilde\mu_s\le \mu_{s}\le \mu_{\rho}$, so that 
$Q_{s}^{(\mu_{\rho})}(z_o)\subset Q_{s}^{(\widetilde\mu_{s})}(z_o)$. This allows to estimate 
\begin{align*}
	\biint_{Q_{s}^{(\mu_{\rho})}(z_o)} 
	\frac{\abs{u}^{p}}{\mu_\rho^{2-\pf}s^p} \dx\dt
	&\le 
	\Big(\frac{\mu_{\rho}}{\widetilde\mu_{s}}\Big)^{\p}
	\biint_{Q_{s}^{(\widetilde\mu_{s})}(z_o)} 
	\frac{\abs{u}^{p}}{\widetilde\mu_s^{2-\pf}s^p} \dx\dt \\
	&\le 
	\Big(\frac{\mu_{\rho}}{\widetilde\mu_{s}}\Big)^{\p}
	\widetilde\mu_{s}^{p} \lambda^p 
	\le
	\mu_\rho^{\p}\,
	\widetilde\mu_s^{p-\p}
	\lambda^p
	\le 
	\mu_\rho^{p} \lambda^p.
\end{align*}
In the last step we used the fact %that our assumptions on $p$ imply
$p-\p>0$. We now define 
\begin{equation}\label{rho-tilde}
	\widetilde\rho
	:=
	\left\{
	\begin{array}{cl}
	R &
	\quad\mbox{if $\mu_\rho=1$,} \\[4pt]
	\inf\big\{s\in[\rho, R]: \mu_s=\widetilde\mu_s \big\} &
	\quad\mbox{if $\mu_\rho>1$.}
	\end{array}
	\right.
\end{equation}
Note that 
$\mu_s=\widetilde\mu_{\widetilde\rho}$ for any $s\in [\rho,\widetilde\rho]$ and in particular $\mu_{\rho}=\widetilde\mu_{\widetilde\rho}$.
Next, we claim that 
\begin{align}\label{bound-theta}
	\mu_\rho 
	\le
	\Big(\frac{s}{\rho}\Big)^{\frac{n+2p}{p-\p}}
	\mu_{s} 
	\quad\mbox{for any $s\in(\rho,R]$.}
\end{align}
If $\mu_\rho=1$, then  also $\mu_s=1$, so that
\eqref{bound-theta} trivially holds.
Therefore, it remains to consider the case $\mu_\rho >1$. If $s\in(\rho,\widetilde\rho]$, then $\mu_\rho=\mu_{s}$, and 
\eqref{bound-theta} obviously holds true. 
Otherwise, if $s\in(\widetilde\rho,R]$, then \eqref{theta>lambda}, the monotonicity of $s\mapsto\mu_s$ and
\eqref{sub-intrinsic}  imply 
\begin{align*}
	\mu_\rho^{p-\p} 
	&=
	\widetilde \mu_{\widetilde\rho}^{p-\p}
	=
	\frac{1}{\lambda^p|Q_{\widetilde\rho}|} 
	\iint_{Q_{\widetilde\rho}^{(\mu_{\widetilde\rho})}(z_o)}
	\frac{\abs{u}^{p}}{\widetilde\rho^p} \dx\dt \\
	&\le
	\Big(\frac{s}{\widetilde\rho}\Big)^{n+2p}
	\frac{1}{\lambda^p|Q_s|} 
	\iint_{Q_{s}^{(\mu_s)}(z_o)} 
	\frac{\abs{u}^{p}}{s^p} \dx\dt 
	\le
	\Big(\frac{s}{\rho}\Big)^{n+2p}
	\mu_{s}^{p-\p} .
\end{align*}
This proves the claim \eqref{bound-theta}.  
We now apply \eqref{bound-theta} with $s=R$. Since  $\mu_{R}=\widetilde\mu_{R}$, the bound \eqref{bound-theta-R} for $\widetilde\mu_{R}$ yields
\begin{align}\label{bound-theta-2}
	\mu_\rho 
	\le
	\Big(\frac{R}{\rho}\Big)^{\frac{n+2p}{p-\p}}
	\mu_{R} 
	\le
	\Big(\frac{4R}{\rho}\Big)^{\frac{n+2p}{p-\p}} .
\end{align}
In the following, we consider the system of concentric cylinders
$Q_{\rho}^{(\mu_{z_o;\rho}^{(\lambda)})}(z_o)$ with radii $\rho\in (0,R]$ and
$z_o\in Q_{2R}$. The cylinders are nested, in the sense that
$$
	Q_{r}^{(\mu_{z_o;r}^{(\lambda)})}(z_o)
	\subset
	Q_{s}^{(\mu_{z_o;s}^{(\lambda)})}(z_o)
	\subset
	Q_{4R}
	\mbox{ whenever $0<r<s\le R$.}
$$
The inclusions hold true due to the monotonicity of the mapping $\rho\mapsto \mu_{z_o;\rho}^{(\lambda)}$ and the fact that $\mu_{z_o;\rho}^{(\lambda)}\ge 1$.
The disadvantage of using $\mu_{z_o;\rho}^{(\lambda)}$ instead of
$\widetilde\mu_{z_o;\rho}^{(\lambda)}$ is that the associated
cylinders are in general only $\mu$-sub-intrinsic with $K =1$, but
not $\mu$-intrinsic.

\subsection{Covering property}\label{sec:covering}
The system of cylinders $Q_{r}^{(\mu_{z_o;r}^{(\lambda)})}(z_o)$ constructed above satisfies a Vitali type covering property. This will be proven in the following lemma. 

\begin{lemma}\label{lem:vitali}
There exists a constant $\hat c=\hat c(n,p)\ge 20$ such that whenever $\lambda\ge\lambda_o$ and $\mathcal F$ is any collection of cylinders $Q_{4r}^{(\mu_{z;r}^{(\lambda)})}(z)$, where $Q_{r}^{(\mu_{z;r}^{(\lambda)})}(z)$ is a cylinder of the form as constructed in Section {\upshape\ref{sec:cylinders}} with radius $r\in(0,\tfrac{R}{\hat c}]$, then there exists a countable subfamily $\mathcal G$ of disjoint cylinders in $\mathcal F$ such that  
\begin{equation}\label{covering}
	\bigcup_{Q\in\mathcal F} Q
	\subset 
	\bigcup_{Q\in\mathcal G} \widehat Q,
\end{equation}
where $\widehat Q$ denotes the $\frac{1}{4}\hat c$-times enlarged cylinder $Q$, i.e.~if $Q=Q_{4r}^{(\mu_{z;r}^{(\lambda)})}(z)$, then $\widehat Q=Q_{\hat c r}^{(\mu_{z;r}^{(\lambda)})}(z)$.
\end{lemma}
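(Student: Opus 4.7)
I would adapt the classical Vitali covering argument to our non-uniform system of parabolic intrinsic cylinders, following the scheme used for intrinsic coverings in \cite{Gianazza-Schwarzacher}. I would first fix a constant $\hat c = \hat c(n,p) \ge 20$ large enough to absorb the geometric overhead produced in Step 4 below, and then partition the family $\mathcal F$ into dyadic generations
\begin{equation*}
  \mathcal F_j := \bigl\{ Q_{4r}^{(\mu_{z;r}^{(\lambda)})}(z) \in \mathcal F : 2^{-j-1} R/\hat c < r \le 2^{-j} R/\hat c \bigr\},
  \qquad j = 0, 1, 2, \dots .
\end{equation*}

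Next I would build $\mathcal G$ generation by generation. Let $\mathcal G_0$ be a maximal subfamily of $\mathcal F_0$ consisting of pairwise disjoint cylinders, the existence of which is guaranteed by Zorn's lemma. Given $\mathcal G_0, \dots, \mathcal G_{j-1}$, I would take $\mathcal G_j$ to be a maximal pairwise disjoint subfamily of those $Q \in \mathcal F_j$ that are also disjoint from every member of $\mathcal G_0 \cup \cdots \cup \mathcal G_{j-1}$, and set $\mathcal G := \bigcup_j \mathcal G_j$. This yields a countable disjoint family. For any $Q' = Q_{4r'}^{(\mu_{z';r'}^{(\lambda)})}(z') \in \mathcal F_j$, the maximality of $\mathcal G_j$ forces the existence of some $Q = Q_{4r}^{(\mu_{z;r}^{(\lambda)})}(z) \in \mathcal G_k$ with $k \le j$, with $Q \cap Q' \ne \emptyset$ and $r \ge 2^{-j-1} R/\hat c \ge r'/2$. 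It remains to verify $\widehat Q \supset Q'$.

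The main obstacle is this last containment, which requires comparing the two scaling parameters $\mu_{z;r}^{(\lambda)}$ and $\mu_{z';r'}^{(\lambda)}$ sitting at two different centers. My approach would be to first prove the cross-center comparison $\mu_{z';r'}^{(\lambda)} \ge c_1(n,p)\,\mu_{z;r}^{(\lambda)}$ whenever $r' \le 2r$ and $Q \cap Q' \ne \emptyset$. This is the key point, and it would be extracted from the sub-intrinsic property \eqref{sub-intrinsic}: because the two cylinders intersect and $r' \le 2r$, there is some $s \simeq r$ such that the translated cylinder $Q_{c r}^{(\mu_{z;r})}(z')$ is contained in $Q_{s}^{(\mu_{z;r})}(z)$; then \eqref{sub-intrinsic} gives an upper bound on $\biint_{Q_{cr}^{(\mu_{z;r})}(z')} |u|^p / ( \mu_{z;r}^{2-\pf}(cr)^p )\,\dx\dt$ in terms of $\mu_{z;r}^{p}\lambda^p$, while the defining identity \eqref{theta>lambda} for $\widetilde\mu_{z';r'}$ (together with the continuity/monotonicity of $\mu \mapsto \biint |u|^p$ in $\mu$) turns this bound into the desired lower bound on $\widetilde\mu_{z';r'} = \mu_{z';r'}$ (or yields $\mu_{z';r'} = 1$, in which case the bound is trivial). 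The growth estimate \eqref{bound-theta} plays an auxiliary role here in comparing $\mu$'s at different radii around the same center.

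Once the comparison $\mu_{z';r'} \ge c_1 \mu_{z;r}$ is in hand, the geometric containment $Q' \subset \widehat Q$ is reduced to an elementary computation which I would split into the two cases $1 < p < 2$ and $p \ge 2$. In both cases the sign of the exponent in the spatial or temporal scaling via $\mu$ works in our favor: a larger $\mu_{z';r'}$ shrinks $Q'$ precisely in the direction where the different $\mu$'s could otherwise cause trouble, so that from $Q\cap Q' \ne \emptyset$, $r'\le 2r$ and $\mu_{z';r'} \ge c_1 \mu_{z;r}$ one obtains $Q_{4r'}^{(\mu_{z';r'})}(z') \subset Q_{\hat c r}^{(\mu_{z;r})}(z)$ by choosing $\hat c = \hat c(n,p)$ sufficiently large (in particular $\hat c \ge 20$). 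This completes the proof of \eqref{covering}.
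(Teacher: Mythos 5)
Your outer structure --- dyadic decomposition of radii, greedy selection of maximal disjoint subfamilies, and reduction of \eqref{covering} to the cross-center comparison $\mu_{z';r'}^{(\lambda)}\ge c_1\,\mu_{z;r}^{(\lambda)}$ --- matches the paper's, and this is indeed the bound to aim for (it is the paper's \eqref{control-mu-1} with the names of arbitrary and selected centers swapped). However, the mechanism you sketch for proving it does not work. First, the proposed inclusion $Q_{cr}^{(\mu_{z;r})}(z')\subset Q_s^{(\mu_{z;r})}(z)$ is circular: when $p<2$, the condition $Q\cap Q'\neq\emptyset$ only gives $|t-t'|<\mu_{z;r}^{p-2}(4r)^p+\mu_{z';r'}^{p-2}(4r')^p$, and if $\mu_{z';r'}\approx 1$ while $\mu_{z;r}$ is large the second term dominates $\mu_{z;r}^{p-2}s^p$ for every $s\lesssim R$; so the inclusion already presupposes the very comparison between the two $\mu$'s that you are trying to prove. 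Second, even granting the inclusion, the deduction runs in the wrong direction: an upper bound $\biint_{Q_{cr}^{(\mu_{z;r})}(z')}\frac{\abs{u}^p}{\mu_{z;r}^{2-\pf}(cr)^p}\,\dx\dt\lesssim\mu_{z;r}^p\lambda^p$ says that $\mu=C\mu_{z;r}$ satisfies the infimum-defining condition for $\widetilde\mu_{z';cr}$, hence yields only an \emph{upper} bound $\widetilde\mu_{z';cr}\lesssim\mu_{z;r}$, which tells you nothing from below about $\mu_{z';r'}=\max_{s\in[r',R]}\widetilde\mu_{z';s}$.

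The paper's argument avoids both problems: it invokes the intrinsic \emph{identity} \eqref{control-mu-2} at the \emph{selected} center --- the one whose $\mu$ must be bounded from above --- at the enlarged radius $\widetilde r_\ast$ from \eqref{rho-tilde}, and only then compares to the sub-intrinsic bound \eqref{sub-intrinsic} at the arbitrary center, after first reducing to the case $\mu_{z;\eta\widetilde r_\ast}\le\mu_{z_\ast;r_\ast}$, which is exactly what makes the inclusion \eqref{inclusion-eta} of the selected cylinder inside an enlarged arbitrary cylinder legitimate. Two further gaps in your sketch: your trivial case is backwards (if $\mu_{z';r'}=1$ the inequality $\mu_{z';r'}\ge c_1\mu_{z;r}$ is not automatic, whereas $\mu_{z;r}=1$ is the genuine trivial case since $\mu\ge1$ always); and you omit the enlarged radius $\widetilde r_\ast$ entirely, in particular the case $\widetilde r_\ast>R/\eta$, where the enlarged selected cylinder no longer fits inside an enlarged arbitrary one and the paper instead appeals directly to the global bound \eqref{first-lambda-0}.
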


\begin{proof}
Throughout the proof we abbreviate
$\mu_{z;r}:=\mu_{z;r}^{(\lambda)}$. We let $\hat c\ge 20$ be a parameter that will be chosen later. 
For $j\in \N$ we define
$$
	\mathcal F_j
	:=
	\Big\{Q_{4r}^{(\mu_{z;r})}(z)\in \mathcal F: 
	\tfrac{R}{2^j\hat c}<r\le \tfrac{R}{2^{j-1}\hat c} \Big\}
$$
and select $\mathcal G_j\subset \mathcal F_j$ by the following procedure:
We choose $\mathcal G_1$ to be any maximal disjoint collection of cylinders in
$\mathcal F_1$. Note that $\mathcal G_1$ contains only finitely many cylinders, since by  the definition of $\mathcal F_1$ and \eqref{bound-theta-2} the $\mathcal L^{n+1}$-measure of each cylinder $Q\in \mathcal G_1$ is uniformly bounded from below. Now, assume that for some $k\in\N_{\ge2}$ the collections $\mathcal G_1, \mathcal G_2, \dots, \mathcal G_{k-1}$ have already been inductively selected. Then, we choose a maximal  disjoint sub-collection of cylinders from $\mathcal F_k$ which do not intersect any of the cylinders $Q^\ast$ from one of the collections $\mathcal G_j$, $j\in\{ 1,\dots ,k-1\}$. More precisely, we choose a maximal disjoint collection of cylinders in
$$
	\bigg\{Q\in \mathcal F_k: 
	Q\cap Q^\ast=\emptyset 
	\mbox{ for any $ \displaystyle Q^\ast\in \bigcup_{j=1}^{k-1} \mathcal G_j $}
	\bigg\}.
$$
Note again that $\mathcal G_k$ is finite.
Finally, we let 
$$
	\mathcal G
	:=
	\bigcup_{j=1}^\infty \mathcal G_j.
$$
By construction, $\mathcal G\subset\mathcal F$ is a countable subfamily of disjoint cylinders in $\mathcal F$.

At this point it remains to prove that for each $Q\in\mathcal F$ there exists a cylinder $Q^\ast\in\mathcal G$ such that $Q\cap Q^\ast\not=\emptyset$ and $Q\subset \widehat {Q}^\ast$.  
To this aim we consider some arbitrary cylinder $Q=Q_{4r}^{(\mu_{z;r})}(z)\in\mathcal F$. Then, there exists an index $j\in\N$ such that $Q\in\mathcal F_j$. The maximality of $\mathcal G_j$ ensures that there exists a cylinder $Q^\ast=Q_{4r_\ast}^{(\mu_{z_\ast;r_\ast})}(z_\ast)\in \bigcup_{i=1}^{j} \mathcal G_i$ with $Q\cap Q^\ast\not=\emptyset$. Then, we have $r<2r_\ast$, since $r\le\tfrac{R}{2^{j-1}\hat c}$ and $r_\ast>\tfrac{R}{2^j\hat c}$.
The main difficulty now is to establish a bound for $\mu_{z_\ast;r_\ast}$ in terms of $\mu_{z;r}$. We claim that the following estimate holds true:
\begin{equation}\label{control-mu-1}
	\mu_{z_\ast;r_\ast}
	\le
	(4\eta)^{\frac{n+2p}{p-\p}}\,
	\mu_{z;r} ,
\end{equation}
where $\eta:= 13$. To prove the claim we denote by $\widetilde
r_\ast\in [r_\ast,R]$ the radius associated to the cylinder
$Q_{r_\ast}^{(\mu_{z_\ast;r_\ast})}(z_\ast)$; see  \eqref{rho-tilde} for the construction. Recall
that either $\mu_{z_\ast;r_\ast}=1$ and $\widetilde r_\ast=R$ or $Q_{\widetilde r_\ast}^{(\mu_{z_\ast;r_\ast})}(z_\ast)$ is
intrinsic in the sense of~\eqref{theta>lambda}. In the former case we have 
$$
	\mu_{z_\ast;r_\ast}
	=
	1
	\le 
	\mu_{z;r},
$$
so that \eqref{control-mu-1} is satisfied. 
If $Q_{\widetilde r_\ast}^{(\mu_{z_\ast;r_\ast})}(z_\ast)$ is intrinsic, we know that
\begin{align}\label{control-mu-2}
	\mu_{z_\ast;r_\ast}^{p-\p}
	=
	\frac{1}{\lambda^p|Q_{\widetilde r_\ast}|}
	\iint_{Q_{\widetilde r_\ast}^{(\mu_{z_\ast;r_\ast})}(z_\ast)}
	\frac{\abs{u}^p}{\widetilde r_\ast^p} \dx\dt. 
\end{align}
Now, we distinguish between the cases  $\widetilde r_\ast\le \frac{R}{\eta}$ and $\widetilde r_\ast> \frac{R}{\eta}$.
We first consider the simpler case $\widetilde r_\ast> \frac{R}{\eta}$. Here, we exploit \eqref{control-mu-2} and \eqref{first-lambda-0} to conclude that 
\begin{align*}
  	\mu_{z_\ast;r_\ast}^{p-\p}
	\le
	\Big(\frac{4R}{\widetilde r_\ast}\Big)^{p} 
	\frac{1}{\lambda^p|Q_{\widetilde r_\ast}|}
	\iint_{Q_{4R}}
	\frac{\abs{u}^p}{(4R)^p} \dx\dt 
	\le
	\Big(\frac{4R}{\widetilde r_\ast}\Big)^{p} 
	\frac{|Q_{4R}|}{|Q_{\widetilde r_\ast}|}
	\le
	(4\eta)^{n+2p} ,
\end{align*}
which implies
\begin{align*}
	\mu_{z_\ast;r_\ast}
	\le
	(4\eta)^{\frac{n+2p}{p-\p}}
	\le
	(4\eta)^{\frac{n+2p}{p-\p}}
	\mu_{z;r}
\end{align*}
and proves \eqref{control-mu-1} in this case. 
Therefore it remains to consider radii $\widetilde r_\ast\le \frac{R}{\eta}$. 
Note that we can assume 
$\mu_{z;r}\le\mu_{z_\ast;r_\ast}$. Otherwise 
\eqref{control-mu-1} trivially holds.
Therefore, the monotonicity of $\rho\mapsto \mu_{z;\rho}$ and the fact that $r\le 2r_\ast\le 2 \widetilde r_\ast\le\eta\widetilde r_\ast$ imply
\begin{equation}\label{mu-ast}
	\mu_{z;\eta \widetilde r_\ast}
	\le
	\mu_{z;r}
	\le
	\mu_{z_\ast;r_\ast}.
\end{equation}
Next, we claim that 
\begin{equation}\label{inclusion-eta}
	Q_{\widetilde r_\ast}^{(\mu_{z_\ast;r_\ast})}(z_\ast)
	\subset 
	Q_{\eta\widetilde r_\ast}^{(\mu_{z;\eta\widetilde r_\ast})}(z).
\end{equation}
For the proof of \eqref{inclusion-eta} a distinction must be made between the cases $p\le 2$ and $p\ge2$. We first consider exponents $\max\{\frac{2n}{n+2},1\}<p\le 2$. 
Since $\widetilde r_\ast\ge r_\ast$ and $|x_\ast-x|<4r+4r_\ast\le 12r_\ast$, we know 
$B_{\widetilde r_\ast}(x_\ast)\subset B_{\eta\widetilde r_\ast}(x)$. 
Moreover, due to \eqref{mu-ast} we may conclude that
\begin{align*}
    \mu_{z_\ast;r_\ast}^{p-2}\widetilde r_\ast^{p} + 
    |t_\ast-t|
    &\le
    \mu_{z_\ast;r_\ast}^{p-2}\widetilde r_\ast^{p} +
    \mu_{z;r}^{p-2}(4r)^{p} +
    \mu_{z_\ast;r_\ast}^{p-2}(4 r_\ast)^{p}
    \\
    &\le
    (1+4^p+8^p)
    \mu_{z;\eta \widetilde r_\ast}^{p-2}
    \widetilde r_\ast^{p}
    \le
   \mu_{z;\eta \widetilde r_\ast}^{p-2}
   (\eta\widetilde r_\ast)^{p},
\end{align*}
and this immediately implies the inclusion
$$
	\Lambda_{\widetilde r_\ast}^{(\mu_{z_\ast;r_\ast})}(t_\ast)
	\subset 
	\Lambda_{\eta\widetilde r_\ast}^{(\mu_{z;\eta\widetilde r_\ast})}(t),
$$
so that \eqref{inclusion-eta} is proven for exponents $\max\{\frac{2n}{n+2},1\}<p\le 2$. 
Otherwise, if $2\le p<\frac{2n}{(n-2)_+}$, we have $|t-t_\ast|<(4r)^p+(4r_\ast)^p\le (12r_\ast)^p$ 
and hence 
$\Lambda_{\tilde r_\ast}(t_\ast)\subset \Lambda_{\eta\tilde r_\ast}(t)$.  
Furthermore, \eqref{mu-ast} yields
\begin{align*}
	(\mu_{z_\ast;r_\ast})^{\frac{2-p}{p}} \widetilde r_\ast +
	|x-x_\ast|
	&\le 
	(\mu_{z_\ast;r_\ast})^{\frac{2-p}{p}} \widetilde r_\ast +
	(\mu_{z;r})^{\frac{2-p}{p}}4r +
	(\mu_{z_\ast;r_\ast})^{\frac{2-p}{p}}4r_\ast \\
    &\le
    (\mu_{z;\eta \widetilde r_\ast})^{\frac{2-p}{p}}
    \eta\widetilde r_\ast,
\end{align*}
which implies the inclusion
$$
	B_{\widetilde r_\ast}^{(\mu_{z_\ast;r_\ast})}(x_\ast)
	\subset 
	B_{\eta\widetilde r_\ast}^{(\mu_{z;\eta\tilde r_\ast})}(x).
$$
This establishes the claim \eqref{inclusion-eta} also for the remaining case $2\le p<\frac{2n}{(n-2)_+}$. Now we can finish the proof of \eqref{control-mu-1}. Due to \eqref{control-mu-2}, \eqref{inclusion-eta}, \eqref{sub-intrinsic} applied with $\rho=s=\eta\tilde r_\ast$, and \eqref{mu-ast}, we obtain
\begin{align*}
    \mu_{z_\ast;r_\ast}^{p-\p}
	&\le
	\frac{\eta^{p}}{\lambda^p|Q_{\widetilde r_\ast}|}
	\iint_{Q_{\eta\widetilde r_\ast}^{(\mu_{z;\eta\widetilde r_\ast})}(z)}
	\frac{\abs{u}^{p}}{(\eta\widetilde r_\ast)^p} \dx\dt 
	\le
	\eta^{n+2p} \mu_{z;\eta\widetilde r_\ast}^{p-\p}
	\le
	\eta^{n+2p} \mu_{z;r}^{p-\p} ,
\end{align*}
so that 
\begin{align*}
	\mu_{z_\ast;r_\ast}
	\le
	\eta^{\frac{n+2p}{p-\p}}\,\mu_{z;r}.
\end{align*}
This finishes the proof of \eqref{control-mu-1}. 

It remains to show the inclusion
\begin{equation}\label{inclusion}
	Q =Q_{4r}^{(\mu_{z;r})}(z)\
	\subset 
	\widehat Q^\ast
	\equiv
	Q_{\hat c r_\ast}^{(\mu_{z_\ast;r_\ast})}(z_\ast)
\end{equation}
for a constant $\hat c=\hat c(n,p)\ge 20$.
If $\max\{\frac{2n}{n+2},1\}<p\le 2$ we get with \eqref{control-mu-1} that
\begin{align*}
	\mu_{z;r}^{p-2}(4r)^{p} + |t-t_\ast|
	&\le
    2\mu_{z;r}^{p-2}(4r)^{p}
    +
    \mu_{z_\ast;r_\ast}^{p-2}(4r_\ast)^{p}\\
	&\le
	\Big[ 2^{p+1} (4\eta)^{\frac{n+2p}{p-\p}(2-p)} + 1\Big]
    \mu_{z_\ast;r_\ast}^{p-2}(4r_\ast)^{p} \\
	&\le
	\mu_{z_\ast;r_\ast}^{p-2}
	(\hat c r_\ast)^{p},
\end{align*}
where $\hat c=\hat c(n,p)$ is chosen suitably. This proves that
$\Lambda_{4r}^{(\mu_{z;r})}(t)\subset \Lambda_{\hat c r_\ast}^{(\mu_{z_\ast;r_\ast})}(t_\ast)$.
Moreover, if we choose $\hat c\ge 20$ we have the inclusion $B_{4r}(x)\subset B_{\hat c r_\ast}(x_\ast)$. This implies \eqref{inclusion}. In the case $2\le p<\frac{2n}{(n-2)_+}$, inequality \eqref{control-mu-1} shows
\begin{align*}
	(\mu_{z;r})^{\frac{2-p}{p}}4r + 
	|x-x_\ast|
	&\le
    2(\mu_{z;r})^{\frac{2-p}{p}}4r
    +
    (\mu_{z_\ast;r_\ast})^{\frac{2-p}{p}}4r_\ast \\
	&\le
	\Big[ 4 (4\eta)^{\frac{p-2}{p}\cdot\frac{n+2p}{p-\p}} + 1\Big]
    (\mu_{z_\ast;r_\ast})^{\frac{2-p}{p}}4r_\ast \\
	&\le
	(\mu_{z_\ast;r_\ast})^{\frac{2-p}{p}}
	\hat c r_\ast,
\end{align*}
for a suitable constant $\hat c=\hat c(n,p)$, from which we deduce
$B_{4r}^{(\mu_{z;r})}(x)\subset B_{\hat c r_\ast}^{(\mu_{z_\ast;r_\ast})}(x_\ast)$.
Moreover, if we choose $\hat c\ge 20$ we have the inclusion $\Lambda_{4r}(t)\subset \Lambda_{\hat c r_\ast}(t_\ast)$. Again this implies \eqref{inclusion}. 
In any case we have thus established the claim \eqref{covering}. This completes the proof of the Vitali type covering property.  
\end{proof}

\subsection{Stopping time argument}
We now let
\begin{equation}\label{second-lambda-0}
	\lambda_o
	:=
	1+\Bigg[\biint_{Q_{4R}} 
	\bigg[\frac{\abs{u}^{p}}{(4R)^p} + |Du|^p + |F|^p\bigg] \d x\d t
	\Bigg]^{\frac{1}{p}},
\end{equation}
so that $\lambda_o$ satisfies the previously demanded requirement \eqref{first-lambda-0}.
For $\lambda\ge\lambda_o$ and $r\in(0,2R]$, we define the superlevel set
of  $|Du|$ by 
$$
	\boldsymbol E(r,\lambda)
	:=
	\Big\{z\in Q_{r}: 
	\mbox{$z$ is a Lebesgue point of $|Du|$ and 
	$|Du|(z) > \lambda$}\Big\}.
$$
Here, we mean Lebesgue points of $|Du|$ with respect to the system of cylinders constructed in Section~\ref{sec:cylinders}. 
For radii $R\le R_1<R_2\le 2R$, we consider the concentric parabolic cylinders
$Q_R\subseteq Q_{R_1} \subset Q_{R_2}\subseteq Q_{2R}$.
We fix $z_o\in \boldsymbol E(R_1,\lambda)$ and write $\mu_s\equiv \mu_{z_o;s}^{(\lambda)}$ for $s\in(0,R]$ throughout this section. By Lebesgue's Differentiation Theorem, cf. \cite[\S 2.9.1]{Federer}, we have
\begin{equation}\label{larger-lambda}
	\liminf_{s\downarrow 0} 
	\biint_{Q_{s}^{(\mu_{s})}(z_o)} 
	\big[|Du|^p + |F|^{p}\big] 
	\d x\d t
	\ge
	|Du|^p(z_o)
	>
	\lambda^p.
\end{equation}
By $\hat c=\hat c(n,p)$ we denote the constant from the Vitali type
covering Lemma \ref{lem:vitali}. From now on, we consider values of $\lambda$ satisfying 
\begin{equation}\label{choice_lambda}
	\lambda
	>
	B\lambda_o,
	\qquad\mbox{where }
	\quad
	B
	:=
	\Big(\frac{4\hat c R}{R_2-R_1}\Big)^{\frac{n+2}{p-\p}}
	>1.
\end{equation} 
For $s$ with
\begin{align}\label{radius-s}
	\frac{R_2-R_1}{\hat c}\le s\le R
\end{align}
we have, due to the definition of $\lambda_o$ in \eqref{second-lambda-0}, \eqref{bound-theta-2} and \eqref{radius-s} that
\begin{align*}
	\biint_{Q_{s}^{(\mu_s)}(z_o)} 
	\big[|Du|^p + |F|^p\big] 
	\d x\d t
	&\le
	\frac{|Q_{4R}|}{\big|Q_{s}^{(\mu_s)}\big|}
	\biint_{Q_{4R}} \big[|Du|^p + |F|^p\big] \d x\d t \\
	&\le
	\frac{|Q_{4R}|}{|Q_{s}|}\mu_s^{\p-(\pf-2)}
	\lambda_o^{p} 
	\le
	\Big(\frac{4R}{s}\Big)^{\frac{p(n+2)}{p-\p}} 
	\lambda_o^{p} \\
	&\le
	\Big(\frac{4\hat c R}{R_2-R_1}\Big)^{\frac{p(n+2)}{p-\p}} 
	\lambda_o^{p} 
	=
	(B\lambda_o)^{p} 
	<
	\lambda^{p}.
\end{align*}
On the other hand, due to \eqref{larger-lambda} we find a sufficiently small radius $0 < s < \tfrac{R_2-R_1}{\hat c}$ such that the integral in \eqref{larger-lambda}
possesses a value larger
than $\lambda^{p}$. By the
continuity of $\rho\mapsto\mu_\rho$  and the absolute continuity of
the integral,
there exists a maximal radius $0<\rho_{z_o} < \tfrac{R_2-R_1}{\hat c}$ such that 
\begin{align}\label{=lambda}
	\biint_{Q_{\rho_{z_o}}^{(\mu_{\rho_{z_o}})}(z_o)} 
	\big[|Du|^p + |F|^p\big] \d x\d t
	=
	\lambda^{p}.
\end{align}
By the maximality of $\rho_{z_o}$ we know that 
\begin{align*}
	\biint_{Q_{s}^{(\mu_{s})}(z_o)} 
	\big[|Du|^p + |F|^p\big] \d x\d t
	<
	\lambda^{p}
	\qquad
	\mbox{for any $\rho_{z_o}<s\le R$.}
\end{align*}
Moreover, due to the monotonicity of $\rho\mapsto\mu_{\rho}$ and \eqref{bound-theta} we have
$$
	\mu_{s}
	\le 
	\mu_{\rho_{z_o}}
	\le 
	\Big(\frac{s}{\rho_{z_o}}\Big)^{\frac{n+2p}{p-\p}}\mu_{s},
$$
so that 
\begin{align}\label{<lambda}
	\biint_{Q_{s}^{(\mu_{\rho_{z_o}})}(z_o)} 
	\big[|Du|^p + |F|^p\big]\dx\dt
	&\le
	\Big(\frac{\mu_{\rho_{z_o}}}{\mu_s}\Big)^{\p-(\pf-2)}
	\biint_{Q_{s}^{(\mu_{s})}(z_o)} 
	\big[|Du|^p + |F|^p\big] \dx\dt \nonumber\\
	&<
	\Big(\frac{s}{\rho_{z_o}}\Big)^{\frac{(n+2p)(\p-(\pf-2))}{p-\p}}\,
	\lambda^{p},
\end{align}
for any $\rho_{z_o}<s\le R$. 
Finally, since $R_1^{p}+(R_2-R_1)^{p}\le R_2^{p}$ we have that $Q_{\hat c\rho_{z_o}}^{(\mu_{\rho_{z_o}})}(z_o)\subset Q_{\hat c\rho_{z_o}}(z_o)\subset Q_{R_2}$. 

\subsection{A Reverse H\"older Inequality}
As before, we consider $z_o\in \boldsymbol E(R_1,\lambda)$ with $\lambda$ as in \eqref{choice_lambda}.
Since $\lambda$ and $z_o$ are fixed, we once again use the abbreviation $\mu_{\rho_{z_o}}
:= \mu_{z_o;\rho_{z_o}}^{(\lambda)}$. We keep in mind that
by construction  $0<\rho_{z_o}< \frac{R_2-R_1}{\hat c}$. According to \eqref{rho-tilde} 
we construct  $\widetilde\rho_{z_o}\in[\rho_{z_o},R]$ and recall that,
at least in the case $\widetilde\rho_{z_o}<R$,
the cylinder $Q_{\widetilde\rho_{z_o}}^{(\mu_{\rho_{z_o}})}(z_o)$ is $\mu$-intrinsic, while $Q_{\rho_{z_o}}^{(\mu_{\rho_{z_o}})}(z_o)$ is possibly only $\mu$-sub-intrinsic.
By construction we have  $\mu_s=\mu_{\rho_{z_o}}$ for any $s\in [\rho_{z_o}, \widetilde\rho_{z_o}]$. In particular, $\mu_{\widetilde\rho_{z_o}}=\mu_{\rho_{z_o}}$. Our aim now is to apply Proposition \ref{prop:revhoelder-int} on the cylinder $Q_{2\rho_{z_o}}^{(\mu_{\rho_{z_o}})}(z_o)$. To this aim we have to verify that hypotheses \eqref{subintrinsic-mu} and \eqref{superintrinsic-mu} are fulfilled on this cylinder. 
From \eqref{<lambda} applied with $s=4\rho_{z_o}$ and \eqref{=lambda} we first observe that
\begin{align}\label{lambda-intr}
	c^{-1}\biint_{Q_{4\rho_{z_o}}^{(\mu_{\rho_{z_o}})}(z_o)}
	\big[|Du|^p + |F|^p\big] \dx\dt
	&\le 
	\lambda^p
	=
	\biint_{Q_{\rho_{z_o}}^{(\mu_{\rho_{z_o}})}(z_o)}
	\big[|Du|^p + |F|^p\big] \dx\dt \nonumber\\
	&\le
	2^{n+p}\biint_{Q_{2\rho_{z_o}}^{(\mu_{\rho_{z_o}})}(z_o)}
	\big[|Du|^p + |F|^p\big] \dx\dt,
\end{align}
for a constant $c=c(n,p)>1$. 
Together with \eqref{sub-intrinsic} applied with $s=4\rho_{z_o}$ this shows
\begin{align*}
	\frac{\displaystyle{
	\biint_{Q_{4\rho_{z_o}}^{(\mu_{\rho_{z_o}})}(z_o)}
	\frac{|u|^p}{{\mu_{\rho_{z_o}}^{2-\pf}(4\rho_{z_o}})^p} \dx\dt}}
	{\displaystyle{
	\biint_{Q_{2\rho_{z_o}}^{(\mu_{\rho_{z_o}})}(z_o)}
	\big[|Du|^p + |F|^p\big] \d x\d t}}
	\le
	2^{n+p}\mu_{\rho_{z_o}}^p,
\end{align*}
ensuring that \eqref{subintrinsic-mu} is satisfied for the cylinder $Q_{2\rho_{z_o}}^{(\mu_{\rho_{z_o}})}(z_o)$ with $K=2^{n+p}$. We now turn our attention to hypothesis \eqref{superintrinsic-mu}. If $\widetilde\rho_{z_o}\le 2\rho_{z_o}$ we use the fact that $\mu_{\widetilde\rho_{z_o}}=\mu_{\rho_{z_o}}$ and inequality \eqref{lambda-intr} to infer that
\begin{align*}
	\mu_{\rho_{z_o}}^p
	=
	\frac{1}{\lambda^p}
	\biint_{Q_{\widetilde\rho_{z_o}}^{(\mu_{\rho_{z_o}})}(z_o)}
	\frac{|u|^p}{\mu_{\rho_{z_o}}^{2-\pf}\widetilde\rho_{z_o}^p} \dx\dt
	\le
	c\,
	\frac{\displaystyle{
	\biint_{Q_{2\rho_{z_o}}^{(\mu_{\rho_{z_o}})}(z_o)}
	\frac{|u|^p}{\mu_{\rho_{z_o}}^{2-\pf}(2\rho_{z_o})^p} \dx\dt}}
	{\displaystyle{
	\biint_{Q_{4\rho_{z_o}}^{(\mu_{\rho_{z_o}})}(z_o)}
	\big[|Du|^p + |F|^p\big] \d x\d t}},
\end{align*}
for a constant $c=c(n,p)$. 
This shows that $Q_{2\rho_{z_o}}^{(\mu_{\rho_{z_o}})}(z_o)$ satisfies \eqref{superintrinsic-mu}$_1$ with $K=c(n,p)$. 
It remains to consider the case $\widetilde\rho_{z_o}> 2\rho_{z_o}$. 
If $\mu_{\rho_{z_o}}=1$, then \eqref{superintrinsic-mu}$_2$ is satisfied with $K=1$. 
If $\mu_{\rho_{z_o}}>1$, then by construction $Q_{\widetilde\rho_{z_o}}^{(\mu_{\rho_{z_o}})}(z_o)$ is intrinsic. 
Using in turn Lemma~\ref{lem:alphalemma}, inequality \eqref{sub-intrinsic} with $(\rho,s)$ replaced by $(\rho_{z_o}, \frac12\widetilde\rho_{z_o})$ (note that this is possible since $\frac12 \widetilde\rho_{z_o}\ge \rho_{z_o}$), Lemma \ref{lem:poin-1}, respectively Lemma~\ref{lem:poin-2} (for $q=p$) and \eqref{<lambda} (applied with $s=\widetilde\rho_{z_o}\in (\rho_{z_o}, R]$) we obtain  \begin{align*}
	\mu_{\rho_{z_o}}\lambda
	&=
	\bigg[\biint_{Q_{\widetilde\rho_{z_o}}^{(\mu_{\rho_{z_o}})}(z_o)} 
	\frac{\abs{u}^{p}}{\mu_{\rho_{z_o}}^{2-\pf}\widetilde\rho_{z_o}^p} \dx\dt\bigg]^\frac1{p} 
	\\
	&\le 
	\bigg[\biint_{Q_{\widetilde\rho_{z_o}}^{(\mu_{\rho_{z_o}})}(z_o)} 
	\frac{\big|u - (u)_{z_o;\widetilde\rho_{z_o}/2}^{(\mu_{\rho_{z_o}})}\big|^{p}}{\mu_{\rho_{z_o}}^{2-\pf}\widetilde\rho_{z_o}^p} \dx\dt
	\bigg]^\frac1{p} 
	+
	\frac{\babs{(u)_{z_o;\widetilde\rho_{z_o}/2}^{(\mu_{\rho_{z_o}})}}}
	{\mu_{\rho_{z_o}}^{\frac{2-\pf}{p}}\widetilde\rho_{z_o}}
	 \\
	&\le 
	c\bigg[\biint_{Q_{\widetilde\rho_{z_o}}^{(\mu_{\rho_{z_o}})}(z_o)} 
	\frac{\big|u {-} (u)_{z_o;\widetilde\rho_{z_o}}^{(\mu_{\rho_{z_o}})}\big|^{p}}
	{\mu_{\rho_{z_o}}^{2-\pf}\widetilde\rho_{z_o}^p} \dx\dt\bigg]^\frac1{p} 
	+
	\bigg[\biint_{Q_{\widetilde\rho_{z_o}/2}^{(\mu_{\rho_{z_o}})}(z_o)} 
	\frac{
	\abs{u}^{p}}{\mu_{\rho_{z_o}}^{2-\pf}\widetilde\rho_{z_o}^p} \dx\dt\bigg]^\frac1{p} \\
	&\le 
	c\bigg[ \biint_{Q_{\widetilde\rho_{z_o}}^{(\mu_{\rho_{z_o}})}(z_o)} 
	\big[|Du|^p + |F|^p \big] 
	\dx\dt\bigg]^\frac1{p} +
	\tfrac12 \mu_{\rho_{z_o}} \lambda\\
	&\le 
	c\, \lambda +
	\tfrac12 \mu_{\rho_{z_o}}\lambda,
\end{align*}
with $c=c(n,p,L)$. 
After re-absorbing $\tfrac12\mu_{\rho_{z_o}}\lambda$ into the left-hand side, we find that
$\mu_{\rho_{z_o}}\le c(n,p,L)$.
This ensures that \eqref{superintrinsic-mu}$_2$ is satisfied with $K=c(n,p,L)$. 
Therefore, we are allowed to apply Proposition \ref{prop:revhoelder-int} on the cylinder $Q_{2\rho_{z_o}}^{(\mu_{\rho_{z_o}})}(z_o)$ with a constant $K=K(n,p,L)$ and thereby obtain the reverse H\"older inequality
\begin{align}\label{rev-hoelder}
	\biint_{Q_{2\rho_{z_o}}^{(\mu_{\rho_{z_o}})}(z_o)} & |Du|^p \dx\dt \nonumber\\
	&\le
	c\bigg[\biint_{Q_{4\rho_{z_o}}^{(\mu_{\rho_{z_o}})}(z_o)} 
	|Du|^{q} \dx\dt \bigg]^{\frac{p}{q}} +
	c\, \biint_{Q_{4\rho_{z_o}}^{(\mu_{\rho_{z_o}})}(z_o)} |F|^p \dx\dt,
\end{align}
with $q:=\max\big\{\frac{2n}{n+2},\frac{np}{n+2},1,p-1\big\}<p$ and $c=c(n,p,\nu, L)$.

\subsection{Estimates on level sets}
We summarize what we have shown so far. If $\lambda$ satisfies \eqref{choice_lambda}, then for any $z_o\in E(R_1,\lambda)$ there exists a  cylinder $Q_{\rho_{z_o}}^{(\mu_{z_o;\rho_{z_o}}^{(\lambda)})}(z_o)$ such that the $\hat c$-times enlarged cylinder $Q_{\hat c\rho_{z_o}}^{(\mu_{z_o;\rho_{z_o}}^{(\lambda)})}(z_o)$ is still contained in $ Q_{R_2}$, and such that  \eqref{=lambda}, \eqref{<lambda} and \eqref{rev-hoelder} hold on this cylinder. 
As before, we abbreviate $\mu_{\rho_{z_o}}\equiv \mu_{z_o;\rho_{z_o}}^{(\lambda)}$.
Moreover, we define the superlevel set of the inhomogeneity $|F|$ by 
$$
	\boldsymbol F(r,\lambda)
	:=
	\Big\{z\in Q_{r}: 
	\mbox{$z$ is a Lebesgue point of $|F|$ and 
	$|F|(z)>\lambda$}\Big\}
$$
and let $\eta\in (0,1]$ to be specified later. 
Due to \eqref{=lambda} and \eqref{rev-hoelder} we have
\begin{align*}
	\lambda^{p}
	&=
	\biint_{Q_{\rho_{z_o}}^{(\mu_{\rho_{z_o}})}(z_o)} 
	\big[|Du|^p + |F|^p\big] \d x\d t \\
	&\le
	c\,\bigg[\biint_{Q_{4\rho_{z_o}}^{(\mu_{\rho_{z_o}})}(z_o)} 
	|Du|^{q} \dx\dt \bigg]^{\frac{p}{q}} +
	c\, \biint_{Q_{4\rho_{z_o}}^{(\mu_{\rho_{z_o}})}(z_o)} |F|^{p} \dx\dt\\
	&\le
	c\,\eta^{p}\lambda^{p} +
	c\,\Bigg[
	\frac{1}{\big|Q_{4\rho_{z_o}}^{(\mu_{\rho_{z_o}})}(z_o)\big|}
	\iint_{Q_{4\rho_{z_o}}^{(\mu_{\rho_{z_o}})}(z_o)\cap \boldsymbol E(R_2,\eta\lambda)} 
	|Du|^{q} \dx\dt \Bigg]^{\frac{p}{q}} \\
	&\quad+
	\frac{c}{\big|Q_{4\rho_{z_o}}^{(\mu_{\rho_{z_o}})}(z_o)\big|}
	\iint_{Q_{4\rho_{z_o}}^{(\mu_{\rho_{z_o}})}(z_o)\cap \boldsymbol F(R_2,\eta\lambda)} 
	|F|^{p} \dx\dt\\
	&\le
	c\, \eta^{p}\lambda^{p} +
	\frac{c}{\big|Q_{4\rho_{z_o}}^{(\mu_{\rho_{z_o}})}(z_o)\big|}
	\iint_{Q_{4\rho_{z_o}}^{(\mu_{\rho_{z_o}})}(z_o)\cap \boldsymbol E(R_2,\eta\lambda)} 
	|Du|^{q} \dx\dt\cdot 
	\mathrm I \\
	&\quad+
	\frac{c}{\big|Q_{4\rho_{z_o}}^{(\mu_{\rho_{z_o}})}(z_o)\big|}
	\iint_{Q_{4\rho_{z_o}}^{(\mu_{\rho_{z_o}})}(z_o)\cap \boldsymbol F(R_2,\eta\lambda)} 
	|F|^{p} \dx\dt,
\end{align*}
with $c=c(n,p,\nu ,L)$ and
$$
	\mathrm I:= \bigg[\biint_{Q_{4\rho_{z_o}}^{(\mu_{\rho_{z_o}})}(z_o)} 
	|Du|^{q} \dx\dt \bigg]^{\frac{p}{q}-1}.
$$
In view of H\"older's inequality and \eqref{<lambda} we find that
\begin{align*}
	\mathrm I
	\le
	\bigg[\biint_{Q_{4\rho_{z_o}}^{(\mu_{\rho_{z_o}})}(z_o)} 
	|Du|^{p} \dx\dt \bigg]^{1-\frac{q}{p}}
	\le
	c\,\lambda^{p-q}.
\end{align*}
We insert this inequality above. Then, we choose
$\eta=(\frac{1}{2c})^{\frac1p}$ and re-absorb $\frac12\lambda^{p}$ into the left-hand side. 
Multiplying the result by $\big|Q_{4\rho_{z_o}}^{(\mu_{\rho_{z_o}})}(z_o)\big|$ yields
\begin{align*}
	\lambda^{p}\Big|Q_{4\rho_{z_o}}^{(\mu_{\rho_{z_o}})}(z_o)\Big|
	&\le
	c\iint_{Q_{4\rho_{z_o}}^{(\mu_{\rho_{z_o}})}(z_o)\cap\mathbf E(R_2,\eta\lambda)} 
	\lambda^{p-q}|Du|^{q} \dx\dt \\
	&\quad+
	c\,
	\iint_{Q_{4\rho_{z_o}}^{(\mu_{\rho_{z_o}})}(z_o)\cap \mathbf F(R_2,\eta\lambda)} 
	 |F|^{p} \dx\dt,
\end{align*}
again with  $c=c(n,p,\nu,L)$. Now,  \eqref{<lambda} for the choice
$s=\hat c\rho_{z_o}$ allows us
to estimate  $\lambda^{p}$ from below. In this way, we deduce 
\begin{align}\label{level-est}
	\iint_{Q_{\hat c\rho_{z_o}}^{(\mu_{\rho_{z_o}})}(z_o)} 
	|Du|^p \dx\dt 
	&\le
    c
    \iint_{Q_{4\rho_{z_o}}^{(\mu_{\rho_{z_o}})}(z_o)\cap \boldsymbol E(R_2,\eta\lambda)} 
	\lambda^{p-q}|Du|^{q} \dx\dt \nonumber\\
	&\quad +
	c
	\iint_{Q_{4\rho_{z_o}}^{(\mu_{\rho_{z_o}})}(z_o)\cap \boldsymbol F(R_2,\eta\lambda)} 
	|F|^{p} \dx\dt,
\end{align}
where $c=c(n,p,\nu,L)$. Since $z_o\in \boldsymbol E(R_1,\lambda)$ was arbitrary, we have thus shown that for any $\lambda>B\lambda_o$ the associated super-level set $\boldsymbol E(R_1,\lambda)$ is covered by a  family
$$
	\mathcal F
	\equiv
	\Big\{Q_{4\rho_{z_o}}^{(\mu_{z_o;\rho_{z_o}})}(z_o)\Big\}
$$
of parabolic cylinders 
with center $z_o\in \boldsymbol E(R_1,\lambda)$ which are contained in $Q_{R_2}$, and such that
\eqref{level-est} holds true on each of these cylinders. Recall, since $\lambda$ is fixed we again write $\mu_{z_o;\rho_{z_o}}\equiv\mu_{z_o;\rho_{z_o}}^{(\lambda)}$. The Vitali type covering Lemma~\ref{lem:vitali} now ensures that there exists a countable subfamily
$$
	\Big\{
	Q_{4\rho_{z_i}}^{(\mu_{z_i;\rho_{z_i}})}(z_i)
	\Big\}_{i\in\N}
	\subset \mathcal F
$$
of pairwise disjoint  cylinders, such that the $\frac14\hat c $-times enlarged cylinders $Q_{\hat c\rho_{z_i}}^{(\mu_{z_i;\rho_{z_i}})}(z_i)$ cover the super-level set $\boldsymbol E(R_1,\lambda)$ and are still contained in $Q_{R_2}$. More precisely, we have 
$$
	\boldsymbol E(R_1,\lambda)
	\subset 
	\bigcup_{i=1}^\infty 
	Q_{\hat c\rho_{z_i}}^{(\mu_{z_i;\rho_{z_i}})}(z_i)
	\subset
	Q_{R_2}.
$$
Since the cylinders $Q_{4\rho_{z_i}}^{(\mu_{z_i;\rho_{z_i}})}(z_i)$ are pairwise disjoint we obtain with \eqref{level-est} that 
\begin{align*}
	\iint_{\boldsymbol E(R_1,\lambda)} 
	|Du|^p \d x\d t
	&\le
	\sum_{i=1}^\infty
	\iint_{Q_{\hat c\rho_{z_i}}^{(\mu_{z_i;\rho_{z_i}}
)}(z_i)} 
	|Du|^p \d x\d t \nonumber\\
	&\le
	c\sum_{i=1}^\infty
	\iint_{Q_{4\rho_{z_i}}^{(\mu_{z_i;\rho_{z_i}})}(z_i)\cap \boldsymbol E(R_2,\eta\lambda)} 
	\lambda^{p-q}|Du|^{q} \dx\dt \\
	&\phantom{\le\,}
	+
	c\sum_{i=1}^\infty
	\iint_{Q_{4\rho_{z_i}}^{(\mu_{z_i;\rho_{z_i}})}(z_i)\cap \boldsymbol F(R_2,\eta\lambda)} 
	|F|^{p} \dx\dt \\
	&\le
	c\iint_{\boldsymbol E(R_2,\eta\lambda)} 
	\lambda^{p-q} |Du|^{q} \dx\dt +
	c\iint_{\boldsymbol F(R_2,\eta\lambda)} |F|^{p} \dx\dt,
\end{align*}
for a  constant $c=c(n, p, \nu, L)$. 
On $\boldsymbol  E(R_1,\eta\lambda)\setminus \boldsymbol E(R_1,\lambda)$ we have  $|Du|\le\lambda$, so that
\begin{align*}
	\iint_{\boldsymbol  E(R_1,\eta\lambda)\setminus \mathbf E(R_1,\lambda)} 
	|Du|^p \d x\d t
	&\le
	\iint_{\boldsymbol E(R_1,\eta\lambda)\setminus \boldsymbol E(R_1,\lambda)} 
	\lambda^{p-q}|Du|^{q} \dx\dt \\
	&\le
	\iint_{\boldsymbol E(R_2,\eta\lambda)} 
	\lambda^{p-q}|Du|^{q} \dx\dt.
\end{align*}
Combining this with the second last inequality yields
\begin{align*}
	\iint_{\boldsymbol E(R_1,\eta\lambda)} 
	|Du|^p \dx\dt 
	\le
	c\iint_{\boldsymbol E(R_2,\eta\lambda)} 
	\lambda^{p-q}|Du|^{q} \dx\dt +
	c \iint_{\boldsymbol F(R_2,\eta\lambda)} |F|^p \dx\dt.
\end{align*}
We now replace $\eta\lambda$ by $\lambda$ and recall that $\eta
=\eta (n,p,\nu,L)<1$.
With this replacement we obtain for any $\lambda> \eta B\lambda_o=:\lambda_1$ that
\begin{align}\label{pre-1}
	\iint_{\boldsymbol E(R_1,\lambda)} 
	|Du|^p \d x\d t 
	&\le
	c \iint_{\boldsymbol E(R_2,\lambda)} 
	\Big(\frac{\lambda}{\eta}\Big)^{p-q}
	|Du|^{q} \dx\dt +
	c \iint_{\boldsymbol F(R_2,\lambda)} |F|^{p} \dx\dt\nonumber\\
	&=
	c\iint_{\boldsymbol  E(R_2,\lambda)} 
	\lambda^{p-q}|Du|^{q} \dx\dt +
	c \iint_{\boldsymbol  F(R_2,\lambda)} |F|^{p} \dx\dt
\end{align}
holds true with a constant $c=c(n,p,\nu ,L)$. This is the
{\it reverse H\"older inequality on super-level sets} we are looking for.

\subsection{Proof of the gradient estimate}\label{sec:vitali}
At this point the quantitative higher integrability estimate follows in a standard way from the reverse H\"older inequality on super-level sets
by  multiplying \eqref{pre-1} by $\lambda^{\epsilon p-1}$ and then
integrating with respect to $\lambda$. For the sake of completeness we nevertheless provide the details. 
The just described procedure would lead
on the left to an integral of $|Du|^{p(1+\eps )}$ on 
$Q_{R_1}$, while on the right the same integral appears
with factor $\frac12$ and $Q_{R_2}$ as domain of integration. If both integrals are finite the one on the right could be re-absorbed in view of Lemma~\ref{lem:tech}. 
However, it is not clear in advance
that these integrals are finite. For this reason we use a truncation argument in order to avoid powers of $|Du|$ that are larger than $p$. The rigorous argument is as follows:
For  $k> \lambda_1$ we define 
the {\it truncation} of $|Du|$ by
$$
	|Du|_k
	:=
	\min\{|Du|, k\},
$$
and for $r\in(0,2R]$ the corresponding super-level set by
$$
	\boldsymbol E_k(r,\lambda)
	:=
	\big\{z\in Q_r: |Du|_k>\lambda\big\}.
$$
Note that $|Du|_k\le |Du|$ a.e., as well as $\boldsymbol E_k(r,\lambda)=\emptyset$ for $k\le\lambda$ and $\boldsymbol  E_k(r,\lambda)=\boldsymbol E(r,\lambda)$ for $k>\lambda$. Therefore, \eqref{pre-1} implies
\begin{align*}
	\iint_{\boldsymbol  E_k(R_1,\lambda)} 
	|Du|_k^{p-q}|Du|^{q} \d x\d t
	\le
	c\iint_{\boldsymbol  E_k(R_2,\lambda)} 
	\lambda^{p-q}|Du|^{q} \dx\dt +
	c \iint_{\boldsymbol F(R_2,\lambda)} |F|^{p} \dx\dt.
\end{align*}
We now multiply this inequality by $\lambda^{\epsilon p-1}$ with some $\epsilon\in (0,1]$ to be chosen later. Integrating the result with respect to $\lambda$ over the interval $(\lambda_1,\infty)$ leads to
\begin{align*}
	\int_{\lambda_1}^\infty \lambda^{\epsilon p-1} &
	\bigg[\iint_{\boldsymbol  E_k(R_1,\lambda)} 
	|Du|_k^{p-q}
	|Du|^{q} \d x\d t \bigg]\d\lambda
	\\
	&\le
	c\int_{\lambda_1}^\infty \lambda^{p-q+\epsilon p-1} 
	\bigg[\iint_{\boldsymbol E_k(R_2,\lambda)} 
	|Du|^{q} \dx\dt\bigg]\d\lambda \\
	&\quad +
	c \int_{\lambda_1}^\infty \lambda^{\epsilon p-1}
	\bigg[\iint_{\boldsymbol  F(R_2,\lambda)} |F|^{p} \dx\dt\bigg]\d\lambda.
\end{align*}
The idea now is to exchange the order of integration in each of the integrals
by an application of Fubini's theorem. For the integral on the left-hand side Fubini's theorem shows
\begin{align*}
	\int_{\lambda_1}^\infty &\lambda^{\epsilon p-1} 
	\iint_{\boldsymbol E_k(R_1,\lambda)} 
	|Du|_k^{p-q}|Du|^{q} \,\dx\dt \d\lambda \\
	&=
	\iint_{\boldsymbol  E_k(R_1,\lambda_1)}
	|Du|_k^{p-q}|Du|^{q}
	\int_{\lambda_1}^{|Du|_k} 
	\lambda^{\epsilon p-1} \,\d\lambda
	 \d x\d t \\
	&=
	\frac{1}{\epsilon p} \iint_{\boldsymbol  E_k(R_1,\lambda_1)}
	\Big[|Du|_k^{p-q+\epsilon p}|Du|^q
	-
	\lambda_1^{\epsilon p} |Du|_k^{p-q}|Du|^q \Big]
	\d x\d t ,
\end{align*}
while for the first integral on the right we get
\begin{align*}
	\int_{\lambda_1}^\infty &\lambda^{p-q+\epsilon p-1} 
	\iint_{\boldsymbol  E_k(R_2,\lambda)} 
	|Du|^{q} \,\dx\dt\d\lambda \\
	&=
	\iint_{\boldsymbol  E_k(R_2,\lambda_1)} |Du|^{q}
	\int_{\lambda_1}^{|Du|_k} 
	\lambda^{p-q+\epsilon p-1} \,\d\lambda
	\dx\dt \\
	&\le 
	\frac{1}{p-q+\epsilon p} \iint_{\boldsymbol  E_k(R_2,\lambda_1)} 
	|Du|_k^{p-q+\epsilon p} |Du|^{q} \,\dx\dt \\
	&\le 
	\frac{1}{p-q} \iint_{\boldsymbol  E_k(R_2,\lambda_1)} 
	|Du|_k^{p-q+\epsilon p} |Du|^{q} \,\dx\dt .
\end{align*}
Finally, for the second integral on the right we find that
\begin{align*}
	\int_{\lambda_1}^\infty \lambda^{\epsilon p-1}
	\iint_{\boldsymbol  F(R_2,\lambda)} |F|^{p} \,\dx\dt\d\lambda 
	&=
	\iint_{\boldsymbol  F(R_2,\lambda_1)} |F|^{p}
	\int_{\lambda_1}^{|F|} 
	\lambda^{\epsilon p-1} \,\d\lambda
	 \dx\dt \\
	&\le
	\frac{1}{\epsilon p} \iint_{\boldsymbol F(R_2,\lambda_1)}
	|F|^{(1+\epsilon)p} \,\dx\dt \\
	&\le
	\frac{1}{\epsilon p} \iint_{Q_{2R}}
	|F|^{(1+\epsilon)p} \,\dx\dt .
\end{align*}
Inserting the preceding estimates above and multiplying by $\epsilon p$ shows that
\begin{align*}
	 \iint_{\boldsymbol  E_k(R_1,\lambda_1)}&
	|Du|_k^{p-q+\epsilon p}|Du|^{q} \,\dx\dt \\
	&\le
	\lambda_1^{\epsilon p} 
	\iint_{\boldsymbol E_k(R_1,\lambda_1)}
	|Du|_k^{p-q}|Du|^{q} \,\dx\dt \\
	&\phantom{\le\,}+
	\frac{c\, \epsilon p}{p-q} \iint_{\boldsymbol  E_k(R_2,\lambda_1)} 
	|Du|_k^{p-q+\epsilon p}|Du|^{q} \,\dx\dt +
	c\iint_{Q_{2R}} |F|^{(1+\epsilon)p} \,\dx\dt .	
\end{align*}
On the complement $Q_{R_1}\setminus \boldsymbol  E_k(R_1,\lambda_1)$ we have $|Du|_k\le\lambda_1$ and hence
\begin{align*}
	\iint_{Q_{R_1}\setminus \mathbf E_k(R_1,\lambda_1)}&
	|Du|_k^{p-q+\epsilon p}|Du|^{q} \,\dx\dt \\
	&\le
	\lambda_1^{\epsilon p} 
	\iint_{Q_{R_1}\setminus \mathbf E_k(R_1,\lambda_1)}
	|Du|_k^{p-q}|Du|^{q} \,\dx\dt .
\end{align*}
Joining the last two estimates and taking into account that $|Du|_k\le |Du|$, we obtain
\begin{align*}
	 \iint_{Q_{R_1}}
	|Du|_k^{p-q+\epsilon p}|Du|^q \,\dx\dt 
	&\le
	\frac{c_\ast \epsilon p}{p-q} \iint_{Q_{R_2}} 
	|Du|_k^{p-q+\epsilon p}|Du|^{q} \,\dx\dt \\
	&\phantom{\le\ }+
	\lambda_1^{\epsilon p} 
	\iint_{Q_{2R}} |Du|^{p} \,\dx\dt +
	c\iint_{Q_{2R}} |F|^{(1+\epsilon)p} \,\dx\dt,
\end{align*}
where $c_\ast=c_\ast(n,p,\nu,L)\ge 1$. Now, we choose
$$
	0
	<
	\epsilon
	\le
	\epsilon_o
	:=
	\frac{p-q}{2pc_\ast}\,.
$$
Note that $\epsilon_o$ depends only on $n,p,\nu$, and $L$. Furthermore,  $\lambda_1^\epsilon\equiv (\eta B\lambda_o)^\epsilon\le B \lambda_o^\epsilon$, since $B\ge 1$, $\eta<1$ and $\epsilon\le 1$. With this choice the last inequality shows that for each pair of radii $R_1$, $R_2$ with  $R\le R_1<R_2\le 2R$ the estimate
\begin{align*}
	 \iint_{Q_{R_1}}&
	|Du|_k^{p-q+\epsilon p}|Du|^{q} \,\dx\dt \\
	&\le
	\tfrac{1}{2} \iint_{Q_{R_2}} 
	|Du|_k^{p-q+\epsilon p}|Du|^{q} 
	\,\dx\dt \\
	&\phantom{\le\ }+
	c\,\lambda_o^{\epsilon p}
	\bigg(\frac{2R}{R_2-R_1}\bigg)^{\frac{p(n+2)}{p-\p}} 
	\iint_{Q_{2R}} |Du|^{p} \,\dx\dt +
	c\iint_{Q_{2R}} |F|^{(1+\epsilon)p} \,\dx\dt
\end{align*}
holds true. In view of the Iteration Lemma~\ref{lem:tech} we conclude that
\begin{align*}
	\iint_{Q_{R}}
	|Du|_k^{p-q+\epsilon p}|Du|^{q} \,\dx\dt 
	\le
	c\, 
	\lambda_o^{\epsilon p} 
	\iint_{Q_{2R}} |Du|^{p} \,\dx\dt
	+
	c\iint_{Q_{2R}} |F|^{(1+\epsilon)p} \,\dx\dt.
\end{align*}
At this point we use Fatou's lemma to pass to the limit $k\to\infty$ on the left-hand side.
Subsequently we take means on both sides and infer that 
\begin{align*}
	\biint_{Q_{R}} |Du|^{(1+\epsilon)p} \,\dx\dt 
	&\le
	c\, 
	\lambda_o^{\epsilon p} 
	\biint_{Q_{2R}} |Du|^{p} \,\dx\dt +
	c\,\biint_{Q_{2R}} |F|^{(1+\epsilon)p} 
	\,\dx\dt.
\end{align*}
In view of the definition of $\lambda_o$ from \eqref{second-lambda-0} the preceding inequality turns into
\begin{align*}
	\biint_{Q_{R}}& |Du|^{(1+\epsilon)p} \,\dx\dt \\
	&\le
	c \Bigg[1+\biint_{Q_{4R}} 
	\bigg[\frac{\abs{u}^{p}}{(4R)^p} + |Du|^p + |F|^p\bigg] \,\dx\dt
	\Bigg]^{\epsilon}
	\biint_{Q_{2R}} |Du|^{p} \,\dx\dt\\
	&\phantom{\le\,}+
	c\,\biint_{Q_{2R}} |F|^{(1+\epsilon)p} \,\dx\dt \\
	&\le
	c \Bigg[1+\biint_{Q_{4R}} 
	\bigg[\frac{\abs{u}^{p}}{(4R)^p} + |Du|^p\bigg] \,\dx\dt
	\Bigg]^{\epsilon}
	\biint_{Q_{2R}} |Du|^{p} \,\dx\dt\\
	&\phantom{\le\,}+
	c\,\biint_{Q_{2R}} |F|^{(1+\epsilon)p} \,\dx\dt.
\end{align*}
Note that $c=c(n,p,\nu ,L)$.
A straightforward covering argument now yields the claimed quantitative 
estimate. 
This completes the proof of Theorem~\ref{thm:higherint}.

\end{document}